\newtheorem{thm}{Theorem}[section]
\newtheorem{lem}[thm]{Lemma}
\newtheorem{cor}[thm]{Corollary}
\newtheorem{conj}[thm]{Conjecture}
\newtheorem{prop}[thm]{Proposition}
\newtheorem{question}[thm]{Question}
\newtheorem{remark}[thm]{Remark}
\theoremstyle{remark}
\newtheorem{rem}[thm]{Remark}
\newtheorem{example}[thm]{Example}
\theoremstyle{definition}
\newtheorem{defn}[thm]{Definition}
\numberwithin{equation}{section}
\begin{document}

\vfuzz0.5pc
\hfuzz0.5pc 

\newcommand{\comment}[1]{}

\ifthenelse{\equal{1}{1}}{
\ifthenelse{\equal{2}{3}}{
\newcommand{\blue}[1]{{\color{blue}#1}}
\newcommand{\green}[1]{{\color{green}#1}}
\newcommand{\red}[1]{{\color{red}#1}}
\newcommand{\cyan}[1]{{\color{cyan}#1}}
\newcommand{\magenta}[1]{{\color{magenta}#1}}
\newcommand{\yellow}[1]{{\color{yellow}#1}} 
}{
\newcommand{\blue}[1]{#1}
\newcommand{\green}[1]{#1}
\newcommand{\red}[1]{#1}
\newcommand{\cyan}[1]{#1}
\newcommand{\magenta}[1]{#1}
\newcommand{\yellow}[1]{#1} 
}
}{
\newcommand{\blue}[1]{}
\newcommand{\green}[1]{}
\newcommand{\red}[1]{}
\newcommand{\cyan}[1]{}
\newcommand{\magenta}[1]{}
\newcommand{\yellow}[1]{} 
}

\newcommand{\claimref}[1]{Claim \ref{#1}}
\newcommand{\thmref}[1]{Theorem \ref{#1}}
\newcommand{\propref}[1]{Proposition \ref{#1}}
\newcommand{\lemref}[1]{Lemma \ref{#1}}
\newcommand{\coref}[1]{Corollary \ref{#1}}
\newcommand{\remref}[1]{Remark \ref{#1}}
\newcommand{\conjref}[1]{Conjecture \ref{#1}}
\newcommand{\questionref}[1]{Question \ref{#1}}
\newcommand{\defnref}[1]{Definition \ref{#1}}
\newcommand{\secref}[1]{\S \ref{#1}}
\newcommand{\ssecref}[1]{\ref{#1}}
\newcommand{\sssecref}[1]{\ref{#1}}

\newcommand{\RED}{{\mathrm{red}}}
\newcommand{\tors}{{\mathrm{tors}}}
\newcommand{\eq}{\Leftrightarrow}

\newcommand{\mapright}[1]{\smash{\mathop{\longrightarrow}\limits^{#1}}}
\newcommand{\mapleft}[1]{\smash{\mathop{\longleftarrow}\limits^{#1}}}
\newcommand{\mapdown}[1]{\Big\downarrow\rlap{$\vcenter{\hbox{$\scriptstyle#1$}}$}}
\newcommand{\smapdown}[1]{\downarrow\rlap{$\vcenter{\hbox{$\scriptstyle#1$}}$}}

\newcommand{\A}{{\mathbb A}}
\newcommand{\I}{{\mathcal I}}
\newcommand{\J}{{\mathcal J}}
\newcommand{\CO}{{\mathcal O}}
\newcommand{\C}{{\mathcal C}}
\newcommand{\BC}{{\mathbb C}}
\newcommand{\BQ}{{\mathbb Q}}
\newcommand{\m}{{\mathcal M}}
\newcommand{\h}{{\mathcal H}}
\newcommand{\Z}{{\mathcal Z}}
\newcommand{\BZ}{{\mathbb Z}}
\newcommand{\W}{{\mathcal W}}
\newcommand{\Y}{{\mathcal Y}}
\newcommand{\T}{{\mathcal T}}
\newcommand{\BP}{{\mathbb P}}
\newcommand{\CP}{{\mathcal P}}
\newcommand{\G}{{\mathbb G}}
\newcommand{\BR}{{\mathbb R}}
\newcommand{\D}{{\mathcal D}}
\newcommand{\DD}{{\mathcal D}}
\newcommand{\LL}{{\mathcal L}}
\newcommand{\f}{{\mathcal F}}
\newcommand{\E}{{\mathcal E}}
\newcommand{\BN}{{\mathbb N}}
\newcommand{\N}{{\mathcal N}}
\newcommand{\K}{{\mathcal K}}
\newcommand{\R} {{\mathbb R}}
\newcommand{\PP}{{\mathbb P}}
\newcommand{\Pp}{{\mathbb P}}
\newcommand{\BF}{{\mathbb F}}
\newcommand{\QQ}{{\mathcal Q}}
\newcommand{\VV}{{\mathcal V}}
\newcommand{\closure}[1]{\overline{#1}}
\newcommand{\EQ}{\Leftrightarrow}
\newcommand{\imply}{\Rightarrow}
\newcommand{\isom}{\cong}
\newcommand{\embed}{\hookrightarrow}
\newcommand{\tensor}{\mathop{\otimes}}
\newcommand{\wt}[1]{{\widetilde{#1}}}
\newcommand{\ol}{\overline}
\newcommand{\ul}{\underline}

\newcommand{\bs}{{\backslash}}
\newcommand{\CS}{{\mathcal S}}
\newcommand{\CA}{{\mathcal A}}
\newcommand{\Q}{{\mathbb Q}}
\newcommand{\F}{{\mathcal F}}
\newcommand{\sing}{{\text{sing}}}
\newcommand{\U} {{\mathcal U}}
\newcommand{\B}{{\mathcal B}}
\newcommand{\X}{{\mathcal X}}
\newcommand{\g}{{\mathcal G}}

\newcommand{\ECS}[1]{E_{#1}(X)}
\newcommand{\CV}[2]{{\mathcal C}_{#1,#2}(X)}

\newcommand{\rank}{\mathop{\mathrm{rank}}\nolimits}
\newcommand{\codim}{\mathop{\mathrm{codim}}\nolimits}
\newcommand{\Ord}{\mathop{\mathrm{Ord}}\nolimits}
\newcommand{\Var}{\mathop{\mathrm{Var}}\nolimits}
\newcommand{\Ext}{\mathop{\mathrm{Ext}}\nolimits}
\newcommand{\EXT}{\mathop{{\mathcal E}\mathrm{xt}}\nolimits}
\newcommand{\Pic}{\mathop{\mathrm{Pic}}\nolimits}
\newcommand{\Spec}{\mathop{\mathrm{Spec}}\nolimits}
\newcommand{\Jac}{\mathop{\mathrm{Jac}}\nolimits}
\newcommand{\Div}{\mathop{\mathrm{Div}}\nolimits}
\newcommand{\sgn}{\mathop{\mathrm{sgn}}\nolimits}
\newcommand{\supp}{\mathop{\mathrm{supp}}\nolimits}
\newcommand{\Hom}{\mathop{\mathrm{Hom}}\nolimits}
\newcommand{\Sym}{\mathop{\mathrm{Sym}}\nolimits}
\newcommand{\nilrad}{\mathop{\mathrm{nilrad}}\nolimits}
\newcommand{\Ann}{\mathop{\mathrm{Ann}}\nolimits}
\newcommand{\Proj}{\mathop{\mathrm{Proj}}\nolimits}
\newcommand{\mult}{\mathop{\mathrm{mult}}\nolimits}
\newcommand{\Bs}{\mathop{\mathrm{Bs}}\nolimits}
\newcommand{\Span}{\mathop{\mathrm{Span}}\nolimits}
\newcommand{\IM}{\mathop{\mathrm{Im}}\nolimits}
\newcommand{\im}{\mathop{\mathrm{im}}\nolimits}
\newcommand{\Hol}{\mathop{\mathrm{Hol}}\nolimits}
\newcommand{\End}{\mathop{\mathrm{End}}\nolimits}
\newcommand{\CH}{\mathop{\mathrm{CH}}\nolimits}
\newcommand{\Exec}{\mathop{\mathrm{Exec}}\nolimits}
\newcommand{\SPAN}{\mathop{\mathrm{span}}\nolimits}
\newcommand{\birat}{\mathop{\mathrm{birat}}\nolimits}
\newcommand{\cl}{\mathop{\mathrm{cl}}\nolimits}
\newcommand{\rat}{\mathop{\mathrm{rat}}\nolimits}
\newcommand{\Bir}{\mathop{\mathrm{Bir}}\nolimits}
\newcommand{\Rat}{\mathop{\mathrm{Rat}}\nolimits}
\newcommand{\aut}{\mathop{\mathrm{aut}}\nolimits}
\newcommand{\Aut}{\mathop{\mathrm{Aut}}\nolimits}
\newcommand{\eff}{\mathop{\mathrm{eff}}\nolimits}
\newcommand{\nef}{\mathop{\mathrm{nef}}\nolimits}
\newcommand{\amp}{\mathop{\mathrm{amp}}\nolimits}
\newcommand{\DIV}{\mathop{\mathrm{Div}}\nolimits}
\newcommand{\Bl}{\mathop{\mathrm{Bl}}\nolimits}
\newcommand{\Cox}{\mathop{\mathrm{Cox}}\nolimits}
\newcommand{\NE}{\mathop{\mathrm{NE}}\nolimits}
\newcommand{\NM}{\mathop{\mathrm{NM}}\nolimits}
\newcommand{\Gal}{\mathop{\mathrm{Gal}}\nolimits}
\newcommand{\coker}{\mathop{\mathrm{coker}}\nolimits}
\newcommand{\ch}{\mathop{\mathrm{ch}}\nolimits}

\title[Rationally Inequivalent Points]{Rationally Inequivalent Points on Hypersurfaces in $\PP^n$}

\author{Xi Chen}
\address{632 Central Academic Building\\
University of Alberta\\
Edmonton, Alberta T6G 2G1, CANADA}
\email{xichen@math.ualberta.ca}

\author{James D. Lewis}
\address{632 Central Academic Building\\ University of Alberta\\ Edmonton, Alberta T6G 2G1, CANADA}
\email{lewisjd@ualberta.ca}

\author{Mao Sheng}
\address{School of Mathematical Sciences\\ University of Science and Technology of China\\ Hefei, 230026, CHINA}
\email{msheng@ustc.edu.cn}

\date{March 26, 2021}


\thanks{Research of the first two authors is partially supported by Discovery Grants from the
Natural Sciences and Engineering Research Council of Canada. Research of the third named author is
partially supported by National Natural Science Foundation of China (Grant No. 11721101), National Key Research and Development Project SQ2020YFA070080, Chinese Universities Scientific Fund (CUSF), and Anhui Initiative in Quantum Information Technologies (AHY150200)}
\keywords{Rational equivalence, Chow group, hypersurface}
\subjclass{Primary 14C25; Secondary 14B10, 14C30, 14D07}
\begin{abstract}
We prove a conjecture of Voisin that any two distinct points on a very general hypersurface of degree $2n+2$ in $\PP^{n+1}$ are rationally inequivalent.
\end{abstract}

\maketitle
\tableofcontents

\section{Introduction}
In this paper, we work exclusively over $\BC$. Recall that Claire Voisin proved the following (\cite[Theorem 3.1]{V1} for hypersurfaces and \cite[Theorem 0.6]{V2} for complete intersections)

\begin{thm}[C. Voisin]\label{VOISINTHM000}
Let $X$ be a very general complete intersection in $\PP^{n+k}$ of type $(d_1,d_2,...,d_k)$.
\begin{itemize}
\item If $\sum (d_i-1) \ge 2n+2$, no two distinct points on $X$ are $\BQ$-rationally equivalent.
\item If $(n,k,d_1) = (2,1,6)$, there are at most countably many points on $X$ that are $\BQ$-rationally equivalent to a fixed point $p$ for all $p\in X$.
\end{itemize}
\end{thm}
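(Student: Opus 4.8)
The plan is to argue by contradiction, following the ``spreading out $+$ Bloch--Mumford $+$ infinitesimal variation of Hodge structure'' strategy.

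\textbf{Step 1: spreading out.} Suppose the first assertion fails, so that for a very general complete intersection $X$ of type $(d_1,\dots,d_k)$ in $\PP^{n+k}$ there are distinct points $x\ne y$ with $x\sim_{\rat}y$. Let $B$ be an open dense subset of the parameter space of smooth complete intersections of this type, with universal family $\pi\colon\X\to B$ (so $\dim B$ is very large), and consider
\[
\Sigma=\{(b,x,y)\in\X\times_B\X:\ x\ne y,\ x\sim_{\rat}y\}.
\]
Stratifying by the degree of the chains of curves carrying the equivalence exhibits $\Sigma$ as a countable union of algebraic sets, so some stratum dominates $B$. Passing to a smooth alteration $T$ of that stratum yields a dominant generically finite morphism $T\to B$, two disjoint sections $\sigma_1,\sigma_2\colon T\to\X_T:=\X\times_BT$, and --- after shrinking $T$ and a Bloch--Srinivas type spreading --- an algebraic family of rational equivalences over $T$ realizing $\sigma_1(t)\sim_{\rat}\sigma_2(t)$ in $\CH_0((\X_T)_t)$ for every $t$.

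\textbf{Step 2: Bloch--Mumford on holomorphic top forms.} The main input is that a fibrewise rational equivalence annihilates holomorphic top forms in families. For any relative holomorphic $n$-form $\widetilde\omega$ on $\X_T/T$, choose a global lift $\widehat\omega\in H^0(\X_T,\Omega^n_{\X_T})$ (this exists after shrinking $T$; two lifts differ by a pullback from $T$). Pulling $\widehat\omega$ back along the family of rational equivalences and using $H^0(\PP^1,\Omega^{\ge1}_{\PP^1})=0$ (so that a holomorphic $n$-form on a $\PP^1$-cylinder over $T$ is pulled back from $T$ and hence has equal restrictions to the two ends) one obtains
\[
\sigma_1^{*}\widehat\omega=\sigma_2^{*}\widehat\omega\qquad\text{in }H^0(T,\Omega^n_T).
\]
Since $X$ is of general type with $K_X=\CO_X(m)$ where $m:=\sum(d_i-1)-n-1\ge n+1$, there are many such forms: $H^0(X,\Omega^n_X)\cong R_m$, the degree-$m$ graded piece of the Jacobian ring $R$ of $X$ (for a hypersurface $\{F=0\}\subset\PP^{n+1}$ this is $\BC[x_0,\dots,x_{n+1}]/(\partial_0F,\dots,\partial_{n+1}F)$; for complete intersections, the analogous graded ring).

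\textbf{Step 3: the infinitesimal contradiction.} I would now differentiate the identity of Step 2 at a very general point $t_0\in T$ and invoke Griffiths' description of the infinitesimal variation of Hodge structure: on the top Hodge piece the derivative of the period map is cup product $R_{d_i}\times R_m\to R_{m+d_i}$, while the condition $\sigma_i(t)\in X_t$ forces the first-order displacement of $\sigma_i$, modulo $T_{\sigma_i(t_0)}X_{t_0}$, to be controlled by the value at $\sigma_i(t_0)$ of the deformation polynomial. Letting the deformation direction range over $T_{[X]}B$, which surjects onto $\bigoplus_iR_{d_i}$, turns $\sigma_1^{*}\widehat\omega=\sigma_2^{*}\widehat\omega$ into the assertion that $p:=\sigma_1(t_0)$ and $q:=\sigma_2(t_0)$ impose, for every $\omega$, the same conditions on a definite system of jets of the linear system $|\CO_X(m)|$ along $X$. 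The remaining, purely numerical, input is that this jet system separates $p$ from $q$ as soon as $m\ge n+1$, i.e. $\sum(d_i-1)\ge 2n+2$ --- a contradiction. This last assertion is the \emph{sharp} part of the argument, provable either via Nori's connectivity theorem applied to $\X\to B$ or via an explicit Macaulay/Gorenstein-duality computation in $R$, and it is the step I expect to be the main obstacle: the threshold $\sum(d_i-1)\ge 2n+2$ is exactly where the needed surjectivity of the relevant multiplication / jet-evaluation maps kicks in, so there is no slack.

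\textbf{Step 4: the borderline sextic surface.} When $(n,k,d_1)=(2,1,6)$ one has $m=d_1-n-2=2=n$, one unit below the threshold, so the separation statement of Step 3 fails by exactly one degree: $p$ and $q$ imposing the same conditions on $|\CO_X(2)|$ no longer forces $q=p$ but only confines $q$ to a zero-dimensional subscheme of $X$ depending algebraically on $p$ and on the combinatorial type of the rational equivalence. Summing over the countably many such types, $\{q\in X:q\sim_{\rat}p\}$ is at most countable for every $p\in X$, which is the second assertion.
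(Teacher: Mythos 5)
Your outline reproduces the shape of Voisin's original argument (which is also, in essence, the route this paper follows in \S 2--3 to prove the stronger $\Gamma$-equivalence version of the first bullet; the theorem itself is quoted from \cite{V1}, \cite{V2}), but the proposal is not a proof: all of the content is concentrated in the separation statement of Step 3, which you assert rather than establish. What is actually needed is that for a versal family $X\subset B\times\PP^{n+k}$ and $b$ general, the sheaf $T_X^n\otimes K_{X/B}\cong\Omega_X^N\otimes\pi^*K_B^{-1}$ (with $N=\dim B$) imposes independent conditions at every pair of distinct points of $X_b$. This is Theorem \ref{VOISINTHM001}, and its proof rests on Clemens--Ein's theorem that $T_X(1)$ is globally generated on a general fiber of the versal family, combined with the identity $T_X^n\otimes K_{X/B}=\wedge^n(T_X(1))\otimes\CO_X(\sum(d_i-1)-(2n+1))$. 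Neither ``Nori connectivity'' nor a bare ``Macaulay/Gorenstein duality computation in $R$'' is a substitute in the stated generality: the Jacobian-ring computation is how \cite{V1} handles surfaces in $\PP^3$, but the clean threshold $\sum(d_i-1)\ge 2n+2$ for arbitrary complete intersections comes from the tangent-bundle positivity, which involves the Kodaira--Spencer map of the universal family and not only the graded ring of a single fiber. You correctly flag this as ``the main obstacle,'' but that is an acknowledgment that the theorem has not been proved.

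Two secondary points. In Step 2, the claim that two lifts of a relative $n$-form ``differ by a pullback from $T$'' is false: they differ by a section of $F^1\Omega^n_{\X_T}$, whose pullbacks under $\sigma_1$ and $\sigma_2$ need not agree, so $\sigma_1^*\widehat\omega-\sigma_2^*\widehat\omega$ is not an invariant of the relative form. This is exactly why Voisin's infinitesimal invariant lives in the quotient $\Omega^n_S\otimes\h^{0,n}/\tilde\nabla(\Omega^{n-1}_S\otimes\h^{1,n-1})$ (cf. \S 5 of this paper), and why the present paper instead works with $N$-forms and the trace map (Definition \ref{COMVDEF000}, Lemma \ref{COMVLEM001}), where no quotient is needed. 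The conclusion of Step 2 survives, since one only needs a single global $n$-form separating the two sections, but your Step 3 ``differentiate and read off the IVHS'' must then be carried out modulo this ambiguity, which is additional work you have not done. Finally, Step 4 is too vague to yield countability: ``confines $q$ to a zero-dimensional subscheme'' amounts to the assertion that the relevant sheaf is still globally generated (though no longer very ample) at $\sum(d_i-1)=2n+1$, and converting that into the countability of $\{q:q\sim_{\rat}p\}$ requires ruling out positive-dimensional components of the rational-equivalence locus using that global generation, not merely a remark about summing over ``combinatorial types.''
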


The main purpose of the paper is to
generalize this result in two directions. First,
we will make a minor improvement by replacing rational equivalence by
Ro\u{\i}tman's $\Gamma$-equivalence \cite{R1}: fixing a smooth projective curve
$\Gamma$ and two points $0\ne \infty\in \Gamma$, for every algebraic cycle
$\xi\in {\mathcal Z}^k(X\times \Gamma)$ with $\supp(\xi)$ flat
over $\Gamma$, the fibers $\xi_0$ and $\xi_\infty$
of $\xi$ over $0$ and $\infty$
are $\Gamma$-equivalent, written as
$\xi_0 \sim_\Gamma \xi_\infty$.

We will prove

\begin{thm}\label{COMVTHMGAMMAEQUIV}
For a fixed smooth projective curve $\Gamma$ with two fixed points $0\ne \infty$,
no two distinct points on a very general complete intersection $X$ in $\PP^{n+k}$ of type $(d_1,d_2,...,d_k)$ are
$\Gamma$-equivalent over $\BQ$ if $\sum (d_i-1) \ge 2n+2$.
\end{thm}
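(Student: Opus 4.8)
The plan is to follow C.~Voisin's proof of \thmref{VOISINTHM000}, replacing rational equivalence by $\Gamma$-equivalence throughout, and to verify that the single step in her argument that depends on the choice of equivalence relation — the vanishing of an infinitesimal Hodge-theoretic invariant — still goes through, precisely because $\Gamma$ is a curve. I would start from the universal family $\pi\colon\X\to B$ of smooth complete intersections of the given type in $\PP^{n+k}$ over a smooth irreducible quasi-projective base $B$, and suppose for contradiction that the very general fibre carries two distinct $\BQ$-$\Gamma$-equivalent points. The cycles $\xi\in\mathcal{Z}^{k}(X_{b}\times\Gamma)$ with $\supp(\xi)$ flat over $\Gamma$ form a countable union of bounded (Hilbert-scheme) families, so $\{(b,p,q)\in\X\times_{B}\X:\ p\sim_{\Gamma}q\ \text{over}\ \BQ\}$ is a countable union of closed algebraic subsets of $\X\times_{B}\X$; since the very general fibre contributes a point of it off the diagonal, one of these subsets dominates $B$ and is not contained in the diagonal. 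Passing to it, shrinking $B$, and making a generically finite base change $\rho\colon\tilde B\to B$, I would arrive at a smooth $\tilde B$ with $\wt\X:=\X\times_{B}\tilde B$, sections $P\neq Q\colon\tilde B\to\wt\X$, an integer $N\geq 1$, and a cycle $\Xi$ on $\wt\X\times\Gamma$ with support flat over $\Gamma$ and finite over $\tilde B\times\Gamma$, such that the relative $0$-cycle $z:=N\bigl(P(\tilde B)-Q(\tilde B)\bigr)$ on $\wt\X$ equals $\Xi_{0}-\Xi_{\infty}$ (a finite sum of such $\Xi$ would do equally well); that is, $z$ is $\Gamma$-equivalent to $0$ on the total space $\wt\X$.

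The key point is then to show that the infinitesimal invariant Voisin attaches to $z$ vanishes. To $z$, viewed as a combination of sections, one associates $z^{*}\eta:=N\bigl(P^{*}\eta-Q^{*}\eta\bigr)\in H^{0}\bigl(\tilde B,\Omega^{j}_{\tilde B}\otimes(\text{twist})\bigr)$, where $\eta$ runs over holomorphic forms on $\wt\X$ of the degrees $j>n$, twisted by a line bundle pulled back from $\PP^{n+k}$, that occur in Voisin's infinitesimal computation. I claim $z^{*}\eta=0$ for all such $\eta$. Pulling $\eta$ back along the correspondence $\Xi$ gives $\Xi^{*}\eta\in H^{0}\bigl(\tilde B\times\Gamma,\Omega^{j}_{\tilde B\times\Gamma}\otimes(\text{twist})\bigr)$, with $z^{*}\eta=\Xi^{*}\eta|_{\tilde B\times\{0\}}-\Xi^{*}\eta|_{\tilde B\times\{\infty\}}$ (using $\Xi_{0}-\Xi_{\infty}=z$ and additivity of the trace in the cycle). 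Since $\Gamma$ is one-dimensional, the Künneth decomposition is $\Omega^{j}_{\tilde B\times\Gamma}=\bigl(\Omega^{j}_{\tilde B}\boxtimes\CO_{\Gamma}\bigr)\oplus\bigl(\Omega^{j-1}_{\tilde B}\boxtimes\Omega^{1}_{\Gamma}\bigr)$, and the twist does not involve the $\Gamma$-factor; hence in the corresponding splitting of $\Xi^{*}\eta$ the first summand is constant along $\Gamma$ and the second restricts to $0$ on every slice $\tilde B\times\{t\}$, so the two restrictions agree and $z^{*}\eta=0$. This is exactly what Voisin extracts from rational equivalence, where $\PP^{1}$ plays the role of $\Gamma$ and the second Künneth summand is absent already because $H^{0}(\PP^{1},\Omega^{1}_{\PP^{1}})=0$; over a curve of higher genus it instead dies upon restriction to a point. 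This is the only step in the proof that is sensitive to the equivalence relation.

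What remains is the hard step, which I would take over unchanged from Voisin: for a very general complete intersection with $\sum(d_{i}-1)\geq 2n+2$ and $P\neq Q$, there is a form $\eta$ of the above type with $z^{*}\eta\neq 0$. As in \cite{V1,V2}, the infinitesimal variation of Hodge structure reduces this to the surjectivity of certain multiplication maps $R_{a}\otimes R_{b}\to R_{a+b}$ in the Artinian Gorenstein Jacobian ring of the complete intersection, valid in the stated range by Macaulay's theorem; since $\sum(d_{i}-1)\geq 2n+2$ is precisely the hypothesis under which Voisin establishes the nonvanishing, her computation applies verbatim. Combined with the vanishing $z^{*}\eta=0$ of the previous paragraph, this forces $P(\tilde b)=Q(\tilde b)$ for very general $\tilde b$, contradicting $P\neq Q$, and the theorem follows.

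I expect the main obstacle to be bookkeeping rather than a new idea. One must carry out the spreading-out so that the individual $\Gamma$-equivalences genuinely assemble into one cycle $\Xi$ flat over $\Gamma$ over a base $\tilde B$ on which the sheaves $\Omega^{\bullet}_{\tilde B}$ are well behaved, which costs some care with partial compactifications; and one must match the precise space of twisted forms and the precise packaging of Voisin's invariant $z^{*}$ to a shape for which the Künneth argument above applies literally — in particular checking that the twisting line bundle in her computation is pulled back from $\PP^{n+k}$, equivalently that it does not involve the $\Gamma$-direction after tracing $\Xi$ down to $\tilde B\times\Gamma$. No hypothesis beyond $\sum(d_{i}-1)\geq 2n+2$ is needed anywhere.
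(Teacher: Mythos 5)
Your proposal is correct and follows essentially the same route as the paper: spread the $\Gamma$-equivalence out over the base via a Hilbert-scheme argument, observe that the resulting infinitesimal invariant is constant along the projective curve $\Gamma$ (your K\"unneth decomposition of $\Omega^j_{\tilde B\times\Gamma}$ is exactly the decomposition \eqref{COMVE067} used in the proof of Lemma \ref{COMVLEM001}), and then invoke Voisin's positivity of $\Omega_X^N\otimes\pi^*K_B^{-1}\cong \wedge^n(T_X(1))\otimes\CO_X(\sum(d_i-1)-(2n+1))$ in the range $\sum(d_i-1)\ge 2n+2$ to produce a form separating the two sections. The only differences are cosmetic: the paper defines the invariant elementarily as $\mathrm{Tr}_{Z/B}\circ(d\sigma)$ on $\pi_*\Omega_X^N$ rather than Hodge-theoretically, and it obtains the required very ampleness from Clemens's global generation of $T_X(1)$ rather than from a Jacobian-ring/Macaulay computation, but both are Voisin's own arguments and both are valid in the stated range.
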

Although the replacement of rational equivalence by $\Gamma$-equivalence is only a mild improvement of Voisin's result, it does lead to the following interesting consequence:

\begin{cor}\label{COMVCOR000}
Fixing a quasi-projective variety $T$, there is no nonconstant rational map from $T$ to a very general complete intersection $X$ in $\PP^{n+k}$ of type $(d_1,d_2,...,d_k)$ if $\sum (d_i-1) \ge 2n+2$ or
$k=1$ and $d_1 = 2n+2$.
\end{cor}
The second case in the corollary follows from our solution to Voisin's conjecture (see Theorem \ref{COMVTHMHYPERSURFACE} below).

Second, we will try to find the optimal bound for $d_i$ where the result holds. Our most optimistic expectation is

\begin{conj}\label{COMVCONJ000}
For a very general complete intersection $X\subset \PP^{n+k}$ of type $(d_1,d_2,...,d_k)$ and every point $p\in X$,
\begin{equation}\label{COMVE000}
\dim R_{X,p,\Gamma} \le 2n - \sum_{i=1}^k (d_i - 1)
\end{equation}
where $R_{X,p,\Gamma} = \{ q\ne p\in X: N(p-q)\sim_\Gamma 0
\text{ for some } N\in \BZ^+\}$ and $\Gamma$ is a fixed smooth projective curve with two fixed points $0\ne \infty$.
\end{conj}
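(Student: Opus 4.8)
We sketch an approach to \conjref{COMVCONJ000}. The idea is to push the spreading-out and infinitesimal-variation-of-Hodge-structure argument behind \thmref{COMVTHMGAMMAEQUIV} and the results of this paper into a \emph{quantitative} form, tracking dimensions rather than merely deriving a contradiction. Let $B$ parametrize smooth complete intersections $X\subset\PP^{n+k}$ of type $(d_1,\dots,d_k)$, let $\pi\colon\X\to B$ be the universal family, and put
\[
Z=\{(b,p,q)\in\X\times_B\X:\ q\ne p,\ N(p-q)\sim_\Gamma 0\text{ in }X_b\text{ for some }N\in\BZ^+\},
\]
a countable union of locally closed subvarieties whose fibre over $(b,p)\in\X$ is $R_{X_b,p,\Gamma}$. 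A component of $Z$ not dominating $B$ is empty over a very general $b\in B$, so \eqref{COMVE000} for very general $X$ amounts to: for every component $Z'$ of $Z$ dominating $B$, every very general $b\in B$, and every $p\in X_b$, the set $\{q\in X_b:(b,p,q)\in Z'\}$ has dimension at most $2n-\sum_i(d_i-1)$. For the components $Z'$ that moreover dominate $\X$ this becomes, after a short count, the codimension estimate $\codim_{\X\times_B\X}Z'\ge\sum_i(d_i-1)-n$; the components dominating $B$ but not $\X$ are handled the same way over the subvariety of $\X$ that they dominate.

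Fix a dominating component $Z'$ and, after a generically finite base change, equip it with two distinct tautological sections $\sigma_p\ne\sigma_q$ of the pulled-back family. Spreading out the finite chain of elementary $\Gamma$-moves witnessing $N(\sigma_p-\sigma_q)\sim_\Gamma 0$ in each fibre, then shrinking the base, produces a codimension-$n$ cycle $W$ on the total space whose restriction to a general fibre is rationally equivalent to $N\sigma_p-N\sigma_q$ modulo a cycle pulled back from the base, one of the correspondences in the chain being induced by the fixed curve $\Gamma$ and contributing to the invariants below in the controlled way already used for \thmref{COMVTHMGAMMAEQUIV}. Since the fibrewise restriction of $W$ is a homologically trivial zero-cycle, $[W]$ lies deep in the Leray filtration of $H^{2n}$ of the total space, and its residual images in the graded pieces---the infinitesimal and higher invariants of the associated normal function---are forced into prescribed subspaces. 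For complete intersections these invariants are computed from the infinitesimal variation of Hodge structure through the Jacobian ring, and the decisive input is a genericity statement: over the full parameter space the variation is as nondegenerate as the Macaulay duality of the Jacobian ring permits, via Green's theorems on its Koszul cohomology and the symmetrizer lemma. A chain of nonvanishing invariants then forces $\sigma_p$ to osculate $\sigma_q$ to higher and higher order, hence finally to agree with it along $Z'$, contradicting $\sigma_p\ne\sigma_q$---unless $Z'$ was small to begin with; counting the stages in the chain yields exactly the codimension $\sum_i(d_i-1)-n$.

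The step I expect to be the true obstacle is carrying this genericity input through \emph{uniformly and quantitatively for every type $(d_1,\dots,d_k)$}, above all in the intermediate range $n+1\le\sum_i(d_i-1)\le 2n$, where the conjectured dimension of $R_{X,p,\Gamma}$ passes from positive to negative. When $\sum_i(d_i-1)$ is near $2n+1$ the holomorphic $n$-forms on $X$ already separate points and a single nonvanishing of the top invariant closes the argument---this is, in essence, the boundary case treated in this paper. For smaller $\sum_i(d_i-1)$ one must instead descend through the lower Hodge pieces $H^{n-1,1},H^{n-2,2},\dots$, showing at each stage both that the relevant Koszul cohomology of the Jacobian ring is maximally nonvanishing and---the harder half---that the corresponding invariant of the spread cycle is not annihilated by the monodromy acting along $Z'$. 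Equivalently, one needs a relative Noether--Lefschetz analysis of how $R_{X_b,p,\Gamma}$ and the osculation orders of $\sigma_p,\sigma_q$ stratify $Z'$, and of how that stratification meets the Jacobian-ring nonvanishing. Neither this uniform Jacobian-ring estimate nor the relative monodromy control enters the boundary case, and supplying them in general is, I believe, where the real difficulty lies.
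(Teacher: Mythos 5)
You have set out to prove \conjref{COMVCONJ000}, but this statement is precisely what the paper leaves open: it is stated as a conjecture, and the paper establishes only the regime $\sum(d_i-1)\ge 2n+2$ (\thmref{COMVTHMGAMMAEQUIV}) and the single boundary case $k=1$, $d_1=2n+2$ (\thmref{COMVTHMHYPERSURFACE}). What you have written is a strategy outline rather than a proof: the decisive steps --- the uniform Koszul-cohomology nonvanishing for the Jacobian ring across all types, the monodromy control along $Z'$, and the ``chain of nonvanishing invariants forces higher osculation'' mechanism together with its codimension count --- are all presented as what one \emph{would need}, and you yourself identify them as the true obstacle. Naming the obstacle is not the same as overcoming it, so there is no argument here that could be checked against a proof in the paper, because the paper contains none.

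Beyond incompleteness, one concrete ingredient you invoke is known to fail. You claim that near the boundary ``the holomorphic $n$-forms on $X$ already separate points and a single nonvanishing of the top invariant closes the argument.'' When $\sum(d_i-1)=2n+1$ this is exactly the weak very-ampleness of $\Omega_X^N\otimes\CO_{X_b}\cong T_X^n(n)\otimes\CO_{X_b}$ needed to apply Proposition \ref{COMVPROPVA}; the paper shows by direct computation that this fails already for $n=2$ (neither \eqref{COMVE419} nor \eqref{COMVE429} holds for hypersurfaces, so \conjref{CONVCONJ001} and Voisin's \conjref{CONVCONJVOISIN000} are both false) and expects it to fail for all $n$. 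This is precisely why the paper's treatment of even the single boundary case $d=2n+2$ requires the much finer analysis of the differential maps $d\sigma_i$, the spaces $W_{X,b,Z,\lambda}$, the generic/special/very special trichotomy for $Z$, and the line-incidence geometry of Propositions \ref{COMVPROPKC} and \ref{COMVPROPLINE}. Any genuine attack on the intermediate range $n+1\le\sum(d_i-1)\le 2n$ would have to contend with analogous failures of the naive IVHS positivity statements rather than assume maximal nondegeneracy, and would also have to produce the quantitative dimension bound, not merely a contradiction; neither is supplied here.
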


Note that $R_{X,p,\Gamma}$ is a locally Noetherian scheme.

The case $\sum (d_i - 1) = n+1$ follows from Ro\u{\i}tman's generalization of Mumford's famous
theorem (\cite{Mu}, \cite{R1} and \cite{R2}). Of course, Voisin proved
\begin{equation}\label{COMVE004}
\dim R_{X,p,\PP^1} \le 2n+1 - \sum_{i=1}^k (d_i - 1)
\end{equation}
for $\sum (d_i - 1)\ge 2n+2$ or $(n,k,d_1) = (2,1,6)$.
Theorem \ref{COMVTHMGAMMAEQUIV} shows that \eqref{COMVE000} holds
for $\sum (d_i - 1)\ge 2n+2$.

If our conjecture holds, $R_{X,p,\Gamma} = \emptyset$
when $\sum (d_i - 1)\ge 2n+1$. So the ``boundary'' case is
$\sum (d_i - 1) = 2n+1$. For example, it is expected that $R_{X,p,\Gamma} = \emptyset$ for a very general
sextic surface $X\subset \PP^3$. Voisin's theorem shows that $\dim R_{X,p,\PP^1} = 0$ for such surfaces $X$.
This boundary case is quite challenging, even only for sextic surfaces.
We claim the following:

\begin{thm}\label{COMVTHMHYPERSURFACE}
No two distinct points are $\Gamma$-equivalent over $\BQ$ on a very general hypersurface $X\subset \PP^{n+1}$
of degree $2n+2$ for a fixed smooth projective curve $\Gamma$ with two fixed points $0\ne \infty$. That is, \eqref{COMVE000} holds for
$k=1$ and $d_1 = 2n+2$.
\end{thm}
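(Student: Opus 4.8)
The plan is to follow Voisin's degeneration method from \cite{V1}, \cite{V2}, but to squeeze out the extra unit of dimension by working with the optimal degeneration and tracking cycles more carefully. Concretely, I would degenerate the very general hypersurface $X\subset\PP^{n+1}$ of degree $d=2n+2$ to a union $X_0 = Y_1\cup Y_2$ (or more generally a chain) of hypersurfaces whose degrees add to $d$, chosen so that the pieces are as ``geometrically rich'' as possible — the natural choice is $\deg Y_1 = \deg Y_2 = n+1$, i.e.\ two hypersurfaces of degree $n+1$ meeting along a smooth $(n-1)$-fold $Z = Y_1\cap Y_2$ of degree $(n+1)^2$. The point of this split is that a hypersurface of degree $n+1$ in $\PP^{n+1}$ is Calabi--Yau, so $H^{n-1,0}$ of the pieces (and of $Z$) is nonzero, which is exactly the Hodge-theoretic input that drives the $\Gamma$-equivalence obstruction.

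The key steps, in order, are as follows. First, suppose for contradiction that on the very general $X$ there are two distinct points $p\ne q$ with $N(p-q)\sim_\Gamma 0$; by a standard spreading-out/Hilbert-scheme argument (as in \cite{V1}) this relation, together with the curve $\Gamma$ and the cycle $\xi$ realizing it, spreads over a whole family and specializes to the central fiber $X_0$. Second, I would run the infinitesimal/variational argument: differentiating the $\Gamma$-equivalence relation against the variation of Hodge structure produces a contradiction unless a certain map built from $H^0(X,\CO_X(d-n-2)) = H^0(X,\CO_X(n))$ (the space controlling first-order deformations paired against $H^{n-1,1}$) fails to be surjective onto the relevant Hodge piece. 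Third — and this is where the improvement over the $2n+1$ bound must come from — one analyzes the limit mixed Hodge structure of the degeneration $X_0 = Y_1\cup Y_2$: the Clemens--Schmid exact sequence relates the vanishing cohomology of $X$ to $H^{n-1}(Z)$ and the $H^{n-1}$ of the $Y_i$'s, and the presence of the holomorphic $(n-1)$-forms on the Calabi--Yau pieces $Y_i$ forces the $\Gamma$-equivalence class of $p-q$ to be detected by an Abel--Jacobi-type invariant valued in a nonzero intermediate Jacobian (or its $\Gamma$-analogue, $J^n_\Gamma$), which cannot vanish for $p\ne q$ on the very general member.

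The main obstacle I anticipate is precisely the gain of the last unit of dimension: Voisin's method, applied naively, gives $\dim R_{X,p,\Gamma}\le 2n+1-\sum(d_i-1) = 0$ for $d=2n+2$, i.e.\ finiteness, not emptiness, and indeed her second bullet for the sextic surface case shows the boundary is genuinely delicate. To get emptiness one cannot merely count dimensions; one must show the relevant infinitesimal invariant is not just generically nonzero but nonzero for \emph{every} pair $p\ne q$, which requires controlling the kernel of the cylinder/Gauss--Manin map very precisely — likely by exploiting that $\deg Y_i = n+1$ makes $K_{Y_i}$ trivial, so that $H^{n-1}(Y_i,\Omega^{n-1}_{Y_i}(-Z))$-type groups behave optimally, and by using the symmetry between the two factors $Y_1$ and $Y_2$ (swapping them) to rule out the ``off-diagonal'' degenerate locus that produced Voisin's countable set. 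Additionally, one has to verify that nothing is lost in passing from rational equivalence to $\Gamma$-equivalence — but Theorem \ref{COMVTHMGAMMAEQUIV} already shows this passage is harmless in the range $\sum(d_i-1)\ge 2n+2$, so the same Bloch--Srinivas-style decomposition-of-the-diagonal argument, adapted to $\Gamma$-equivalence, should carry through here with only notational changes.
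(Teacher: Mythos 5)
Your proposal does not follow the paper's route, and more importantly it leaves the decisive step unproved. The paper does not degenerate $X$ at all: it works with a versal family (a perturbed Fermat hypersurface), defines an elementary infinitesimal invariant $\delta Z$ via the trace map $\mathrm{Tr}_{Z/B}\circ(d\sigma)$ on $\pi_*\Omega_X^N$ (no limit mixed Hodge structures, no Clemens--Schmid), and reduces everything to a concrete linear-algebra condition \eqref{COMVE600} on a subspace $W_{X,b,Z,\lambda}\subset H^0(Z,T_P(1))$. The obstacle you correctly identify in your last paragraph --- that at the boundary $d=2n+2$ the naive variational argument only gives finiteness, not emptiness --- is precisely where your proposal stops being a proof. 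The paper shows (and you should be aware) that the positivity statement your ``second step'' would need, namely weak very ampleness of $T_X^n(n)$ on the fibers (equivalently that every pair $p\ne q$ imposes independent conditions on $H^0(X_b,\Omega_X^N)$), actually \emph{fails} for hypersurfaces: conditions \eqref{COMVE419} and \eqref{COMVE429} do not hold. So differentiating the $\Gamma$-equivalence against the variation cannot by itself produce a contradiction for every pair, and your appeal to ``controlling the kernel of the cylinder map very precisely'' and to ``the symmetry between $Y_1$ and $Y_2$'' is not an argument --- it is exactly the gap.

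What actually closes the gap in the paper is a two-stage argument you do not have. First, a weaker criterion is used: one only needs the two evaluation kernels $\ker(d\sigma_1\circ\Gamma_b)$ and $\ker(d\sigma_2\circ\Gamma_b)$ to differ (Proposition \ref{COMVPROPUNI}), which after an explicit computation with the basis $\{\omega_{ijk}\}$ of $\W_{X,b}$ and the map $L_\lambda$ succeeds for all ``special'' pairs, but for ``generic'' pairs yields only the geometric conclusion (Proposition \ref{COMVPROPKC}) that the line $\Lambda$ through two $\Gamma$-equivalent points must meet $X$ in exactly those two points. Second, an enumerative argument about the incidence variety $W_m$ of such lines (Proposition \ref{COMVPROPLINE}) shows that a general $X$ carries two such lines whose contact points span a third line meeting $X$ in more than two points; combined with the transitivity relation $dp_{i1}\sim_\Gamma dp_{i2}\sim_\Gamma X.\Lambda_i$ this contradicts Proposition \ref{COMVPROPKC}. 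This final geometric step is the genuinely new idea that gains the last unit of dimension, and nothing in your degeneration-plus-Clemens--Schmid outline substitutes for it; as written, your proposal would at best reproduce the bound $\dim R_{X,p,\Gamma}\le 0$ already contained in Voisin's theorem.
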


Note that the bound $d \ge 2n+2$ is optimal for hypersurfaces of degree $d$ in $\PP^{n+1}$: For a general hypersurface
$X$ of degree $d\le 2n+1$ in $\PP^{n+1}$, there exist two lines $L_1$ and $L_2$ in $\PP^{n+1}$ such that each $L_i$
meets $X$ at a unique point $p_i$ with $p_1\ne p_2$.
\magenta{Then
$dp_1\sim_{\PP^1} L_1 . X \sim_{\PP^1} L_2.X \sim_{\PP^1} dp_2$ and hence $p_1$ and $p_2$ are rationally equivalent over $\BQ$. Please see
Appendix \ref{COMVSECSF} for details.}

Recently, Eric Riedl and David Yang proved Conjecture \ref{COMVCONJ000} for $k=1$ and $d_1 \le 2n+1$ \cite{RiedlYangAGTH}. So together with Theorem
\ref{COMVTHMHYPERSURFACE}, we know that the conjecture holds for hypersurfaces.

A logarithmic version of Theorem \ref{COMVTHMGAMMAEQUIV}, with rational equivalence replaced by ${\mathbb A}^1$-equivalence, was given in \cite{CZ-logVoisin}.

We are only interested in the rational equivalence of two points. Voisin's method can be easily extended to the study of the linear dependence of $m$ points under rational equivalence on generic hypersurfaces. This was done in \cite{F}, where it was proved that for every $m\in \BZ^+$, there exists a number $d(m,n)$ such that $m$ distinct points on a very general hypersurface $X\subset \PP^{n+1}$ of $\deg X \ge d(m,n)$ are linearly independent in $\CH_0(X)\otimes \BQ$. However, the bound $d(m,n)$ obtained there is certainly not optimal. We are expecting that $d(m,n) = 2n+\lceil (m+1)/2\rceil$. However, we are not certain whether this is true for $m\ge 3$.

\section{Relative cycle map}\label{COMVSECRCM}

Voisin's proof consists of two major components. One is relative cycle map. For a relative Chow cycle
$Z\in \mathrm{CH}_\mathrm{hom}^n(X/B)$ for a smooth projective family $\pi: X\to B$ of relative dimension
$n$, if $\mathrm{AJ}_n(Z_b) = 0$ under the Abel-Jacobi map on each fiber $X_b$, one can define
some \red{topological} invariant $\delta Z\in H^n(R^n\pi_* \BQ)$.\comment{\red{When $R^k \pi_*\CO_X = 0$ for $k > 0$, $\delta Z$ is given by the formula \eqref{COMVE014} (see below).}
\blue{Can you prove this or quote the source of the proof? If not, maybe delete it?}}
This invariant can be defined in a Hodge-theoretical way as in \cite{V2}. Please see Appendix \ref{COMVSECNOAI} for a comprehensive treatment along this line.
Here we take a different approach: we define $\delta Z$ directly by \eqref{COMVE014} (see below)
and then we prove $\delta Z$ is invariant under
rational equivalence. This has the advantage of being elementary:
no Hodge theory is involved in the definition of $\delta Z$. In addition, we will obtain for free that $\delta Z$ is
invariant under $\Gamma$-equivalence.
Another advantage of this approach is that $\delta Z$ is well defined
for an arbitrary flat family $\pi: X\to B$ without any extra hypotheses on $X/B$.

\begin{defn}\label{COMVDEF000}
Let $\pi: X\to B$ be a flat and surjective morphism
of relative dimension $n$ from \red{a smooth variety} $X$ onto a smooth variety $B$
of $\dim B = N$. For a multi-section $Z\subset X$, we define
\begin{equation}\label{COMVE013}
\delta Z\in \Hom(\pi_* (\wedge^N \Omega_X),
\wedge^N \Omega_B)
= \Hom(\pi_* \Omega_X^N, K_B)
\end{equation}
as follows:
\begin{equation}\label{COMVE014}
\delta Z = \mathrm{Tr}_{Z/B} \circ (d\sigma):
\pi_* \Omega_X^N
\xrightarrow{d \sigma}
(\pi\circ \sigma)_* \Omega_{Z}^N
= (\pi\circ \sigma)_* K_{Z}
\xrightarrow{\mathrm{Tr}_{Z/B}}
K_B
\end{equation}
where $\mathrm{Tr}_{Z/B}$ is the trace map and
$\sigma: Z\hookrightarrow X$ is the embedding.

We can easily extend $\delta$ to the free abelian
group ${\mathcal Z}^n(X/B)$
of algebraic cycles $Z$ of pure codimension $n$
in $X$ whose support $\supp(Z)$ is flat over $B$.
For $Z = \sum m_i Z_i$ with $Z_i$ multi-sections of $\pi$,
we let $\delta Z = \sum m_i \delta Z_i$.
\end{defn}

\begin{rem}\label{COMVREMDELTAZ}
The definition \eqref{COMVE014} of $\delta Z$ might need some further explanation. The differential map $d\sigma$ is usually
$d\sigma: \sigma^* \Omega_X^N\to \Omega_Z^N$. In \eqref{COMVE014}, it is the composition of $d\sigma$ and $(\pi\circ \sigma)_*$:
\begin{equation}\label{COMVE500}
\begin{tikzcd}
\pi_* \Omega_X^N \arrow{r} & \pi_* (\Omega_X^N \otimes \CO_Z) \arrow{r}
\arrow[equal]{d}
& (\pi\circ\sigma)_* \Omega_Z^N\\
& \pi_* (\sigma_* \sigma^* \Omega_X^N).
\end{tikzcd}
\end{equation}
The trace map $\mathrm{Tr}_{Z/B}$ can be defined for $\pi_*(\wedge^m \Omega_Z) \to \wedge^m \Omega_B$
under a generically finite map $\pi: Z\to B$. Obviously, it is well defined outside of the ramification locus of $\pi$.
Since every meromorphic differential form in $\wedge^m \Omega_B$ is regular if it is regular in codimension $1$,
it suffices to show that the image of a differential $m$-form on $Z$ under the trace map can be extended to a
regular $m$-form on $B$ in codimension $1$ \cite[Proposition 5.77, p. 185]{K}. Moreover, the trace map is well defined for $B$ normal if
we follow the convention to define $\Omega_B$ to be the sheaf of differential forms regular in codimension $1$.
However, $\mathrm{Tr}_{Z/B}$ cannot be defined for $\pi_*(\Omega_Z^{\otimes m}) \to
\Omega_B^{\otimes m}$ when $m\ge 2$, which is the reason why Mumford's argument cannot be generalized
using pluri-canonical forms.
\end{rem}

\begin{lem}\label{COMVLEM001}
Let $\pi: X\to B$ be a flat and projective morphism
of relative dimension $n$ from $X$ onto a smooth variety $B$ of
$\dim B = N$ and let $Z$ be a cycle in ${\mathcal Z}^n(X/B)$.
If $\pi_* \Omega_X^N$ is locally free and
$Z_b \sim_{\Gamma} 0$ for all $b\in B$,
then $\delta Z = 0$, where $\Gamma$ is a fixed smooth projective
curve with two fixed points $0\ne \infty$.
\end{lem}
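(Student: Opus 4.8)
The plan is to reduce everything to the statement that $\delta Z$ is unchanged when $Z$ is replaced by a $\Gamma$-equivalent cycle, together with a dimension count that forces the vanishing. First I would reformulate the hypothesis: $Z_b \sim_\Gamma 0$ for all $b$ means, essentially by the definition of $\Gamma$-equivalence spread out over the base, that after passing to an \'etale (or generically finite) cover $B' \to B$ there is a cycle $\mathcal{W} \in \mathcal{Z}^n(X' \times \Gamma / B')$, flat over $B' \times \Gamma$, whose fibers over $0$ and $\infty$ are $f^* Z$ and a cycle supported in a way that manifestly has $\delta = 0$ (e.g. a constant family, or $0$ itself). By the base-change compatibility \eqref{COMVE900}, it suffices to prove $\delta(f^*Z) = 0$, so I may freely replace $B$ by $B'$ and assume the interpolating family $\mathcal{W}$ exists over $B$ itself.

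Next I would prove the key \emph{invariance}: if $\mathcal{W} \in \mathcal{Z}^n(X \times \Gamma / B \times \Gamma)$ is flat over $B \times \Gamma$, then $\delta \mathcal{W}_t \in \Hom(\pi_* \Omega_X^N, K_B)$ is independent of $t \in \Gamma$. The point is that $\delta \mathcal{W}_t$, as a function of $t$, extends to a section over all of $\Gamma$ of a trivial-in-$t$ sheaf (since $\Gamma$ is projective and the target $\Hom(\pi_*\Omega_X^N, K_B)$ does not vary), hence is constant — this is exactly where projectivity of $\Gamma$ and properness of $\pi$ enter, and it is the reason the argument works uniformly in $\Gamma$ with no Hodge theory. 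Concretely, I would consider the relative family over $B \times \Gamma$, form $\delta \mathcal{W} \in \Hom(p_1^* \pi_* \Omega_X^N, K_{B\times\Gamma})$ where $p_1: B \times \Gamma \to B$, decompose $K_{B\times\Gamma} = p_1^* K_B \otimes p_2^* K_\Gamma$, and observe that the $K_\Gamma$-component of $\delta \mathcal{W}$ restricted to a fiber $B \times \{t\}$ is $\delta \mathcal{W}_t$ up to the identification; since $p_{1*}(p_2^* \CO_\Gamma) = \CO_B$ by properness of $\Gamma$, the pushforward of $\delta \mathcal{W}$ along $p_1$ is a single element that represents the common value of all the $\delta \mathcal{W}_t$. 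Actually cleaner: evaluate $\delta \mathcal{W}$ against a local section $\omega$ of $\pi_*\Omega_X^N$ to get a section of $K_{B \times \Gamma}$; its restriction to each $\{b\} \times \Gamma$ is a regular $1$-form on $\Gamma$ paired against... — I would need to be careful here; the honest version is that $\delta \mathcal{W}$ gives a map $\pi_* \Omega_X^N \to p_{1*} K_{B \times \Gamma} = K_B \otimes p_{1*} p_2^* K_\Gamma = K_B \otimes H^0(\Gamma, \Omega_\Gamma)^\vee{}^\vee \cdots$; the upshot I want is that specializing to $t = 0$ and $t = \infty$ gives the same element of $\Hom(\pi_*\Omega_X^N, K_B)$ because the difference is divisible by a function with divisor $0 - \infty$, which has no global sections argument forces triviality. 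Either way, $\delta(\mathcal{W}_0) = \delta(\mathcal{W}_\infty)$.

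Then I would finish as follows. Apply invariance to the interpolating cycle to get $\delta Z = \delta(\mathcal{W}_0) = \delta(\mathcal{W}_\infty) = \delta(\text{trivial cycle})$. A constant family $B \times W_0 \hookrightarrow X$ (pulled back from a cycle on a single fiber, if such a reformulation is available) has $d\sigma$ factoring through forms pulled back from $B$, so $\delta = 0$ on the $\Omega_X^N$ coming from the fiber directions; more robustly, $\delta$ of the zero cycle is zero, and $\Gamma$-equivalence to $0$ means the limit cycle can be taken to be (rationally equivalent within a fiber to, hence after a further spreading-out) a cycle with vanishing $\delta$ — here I would invoke that $\delta$ is additive and that a cycle which is a pullback $p_B^* (\text{something})$ or which sits in a single fiber contributes $0$ because $\wedge^N$ of the cotangent bundle restricted to it dies. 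The \textbf{main obstacle} I anticipate is making the ``spreading out'' of the fiberwise $\Gamma$-equivalence precise and uniform: a priori $Z_b \sim_\Gamma 0$ only gives, for each $b$, an interpolating cycle on $X_b \times \Gamma$, and one must show these can be chosen in an algebraic family over a generically finite cover of $B$ (using that the relevant Chow varieties / Hilbert schemes are of finite type and the locus where $\Gamma$-equivalence to $0$ holds is a countable union of closed subvarieties, with $B$ very general or irreducible forcing one component to dominate). Once that bookkeeping is done, the invariance-plus-vanishing argument is soft; the flatness hypotheses are exactly what is needed for $\mathrm{Tr}_{Z/B}$ and the base change \eqref{COMVE900} to behave, and local freeness of $\pi_* \Omega_X^N$ is used only to identify $\Hom(\pi_*\Omega_X^N, K_B)$ with a sheaf whose sections over the projective $\Gamma$-direction are constant.
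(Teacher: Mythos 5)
Your proposal follows essentially the same route as the paper: spread out the fiberwise $\Gamma$-equivalence over a dominant generically finite cover $B'\to B$ via a Hilbert-scheme argument, use the base-change diagram \eqref{COMVE900} together with local freeness of $\pi_*\Omega_X^N$ to reduce to $B'$, and then observe that $t\mapsto \delta\mathcal{W}_t$ is a morphism from the projective curve $\Gamma$ to the fixed vector space $\Hom(\pi_*\Omega_X^N,K_B)$ (obtained, as in the paper, by projecting $\delta\mathcal{W}$ onto the component of $\Hom(\varepsilon_*\Omega_{X\times\Gamma}^{N+1},K_{B\times\Gamma})$ carrying the $\eta_2^*K_\Gamma$ factor), hence constant. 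The one place you diverge, and where your write-up as it stands would not go through, is the endgame: $Z_b\sim_\Gamma 0$ does not furnish a single flat cycle on $X_b\times\Gamma$ whose fiber at $\infty$ is literally $0$ or ``manifestly has $\delta=0$'' --- flatness over $\Gamma$ forces the fibers to have constant degree, a cycle in ${\mathcal Z}^n(X/B)$ is by definition flat over $B$ and so cannot ``sit in a single fiber,'' and $X$ is not a product, so ``constant family'' and ``$p_B^*(\text{something})$'' do not parse here. The correct spread-out statement, which the paper uses, is $Y_0-Y_\infty=f^*Z$ as a \emph{difference} of the two fibers of a single cycle $Y$ on $X'\times\Gamma$; with that formulation, constancy of $\gamma(t)=\delta Y_t$ immediately gives $\delta f^*Z=\gamma(0)-\gamma(\infty)=0$, and your entire third paragraph about evaluating $\delta$ on a trivial cycle becomes unnecessary.
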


\begin{proof}
Since $\pi_* \Omega_X^N$ is locally free, $\delta Z = 0$
if and only if $\delta Z = 0$ at a general point of $B$.
\magenta{So we may shrink $B$ if necessary.}

Using a Hilbert scheme argument, we can
find a dominant and generically finite morphism $f: B'\to B$
and a cycle $Y\in {\mathcal Z}^n(X'\times \Gamma)$ such that
$\supp(Y)$ is flat over $B'\times\Gamma$ and $Y_0 - Y_\infty = f^* Z$,
where $X' = X\times_B B'$,
$Y_t$ is the fiber of $Y$ over $t\in \Gamma$ and
$f^* Z$ is the pullback of $Z$ under $f: X'\to X$ (we also use $f$ to denote the map $X'\to X$).
\magenta{Note that we can shrink $B$ to guarantee the flatness of $Y$ over $B'\times \Gamma$.}

Note that $\delta$ commutes with base change. Namely,
we have the commutative diagram
\begin{equation}\label{COMVE900}
\begin{tikzcd}
f^*\pi_* \Omega_X^N \arrow{r} \arrow[left]{d}{f^*(\delta Z)} & (\pi')_* \Omega_{X'}^N
\arrow{d}{\delta (f^*Z)}
\\
f^* K_B \arrow{r} & K_{B'}	
\end{tikzcd}
\end{equation}
where $\pi': X' \to B'$ is the projection.

Obviously, $\delta Z = 0$ if $\delta f^* Z = 0$ by \eqref{COMVE900}
and the fact that $\pi_* \Omega_{X}^N$ is locally free. So
we may simply replace $(X,B)$ by $(X',B')$.

Since $Y$ is flat over $B\times\Gamma$, it defines $\delta Y$ lying
\begin{equation}\label{COMVE067}
\begin{aligned}
\delta Y &\in \Hom(\varepsilon_* \Omega_{X\times \Gamma}^{N+1}, K_{B\times \Gamma})
\\
&= \Hom(\eta_1^* \pi_* \Omega_{X}^{N+1} \oplus \eta_1^* \pi_* \Omega_{X}^N \otimes
\eta_2^* K_\Gamma,
\eta_1^* K_{B}\otimes \eta_2^* K_\Gamma)\\
&= \Hom(\eta_1^* \pi_* \Omega_{X}^{N+1},
\eta_1^* K_{B}\otimes \eta_2^* K_\Gamma) \oplus
\Hom(\eta_1^* \pi_* \Omega_{X}^N, \eta_1^* K_{B})
\end{aligned}
\end{equation}
where $\varepsilon$, $\eta_1$ and $\eta_2$ are the projections
$\varepsilon: X\times \Gamma \to B\times \Gamma$,
$\eta_1: B\times \Gamma \to B$
and
$\eta_2: B\times \Gamma \to \Gamma$, respectively.
Let $\rho$ be the projection
\begin{equation}\label{COMVE008}
\begin{tikzcd}
\Hom(\varepsilon_* \Omega_{X\times \Gamma}^{N+1}, K_{B\times \Gamma})
\ar{r}{\rho} &
\Hom(\eta_1^* \pi_* \Omega_{X}^N,
\eta_1^* K_{B})
\ar[equal]{d}\\
 & H^0(\eta_1^* ((\pi_* \Omega_{X}^N)^\vee
\otimes K_{B}))
\end{tikzcd}
\end{equation}
given in \eqref{COMVE067}.

For every coherent sheaf $V$ on $B$, we have
$$
H^0(\eta_1^* V) =  H^0((\eta_1)_* \eta_1^*V) = H^0(V)
$$
since $\Gamma$ is projective. In other words, for every $s\in H^0(\eta_1^* V)$,
its restriction $s_t$ to $t\in \Gamma$ is a constant section in $H^0(V)$. Therefore, $\rho(\delta Y)_t$ is constant.

For every $t\in \Gamma$, we clearly have $\delta Y_t = \rho(\delta Y)_t$.
Therefore, $\delta Y_t$ is constant.
It follows that
$\delta Z = \delta Y_0 - \delta Y_\infty = 0$. We are done.
\end{proof}

We say that a closed subscheme $Z\subset X$ or its ideal sheaf $I_Z\subset \CO_X$
imposes independent conditions on a coherent sheaf $\f$ or its global sections $H^0(\f)$
(resp. a linear series $\D \subset H^0(\f)$) on $X$ if
$H^0(\f) \to H^0(\f\otimes \CO_Z)$ (resp. $\D \to H^0(\f\otimes \CO_Z)$) is surjective.

A coherent sheaf $\f$ on a variety $X$ is {\em globally generated}
(resp. {\em very ample}) if every $0$-dimensional
subscheme $Z\subset X$ of length $h^0(\CO_Z) = 1$ (resp. $2$) imposes independent conditions on $\f$. We also say that $\f$ is {\em weakly very ample} if this holds for all $Z = \{p,q\}$ consisting of two distinct points
$p\ne q$.

To show that $\sigma_1(b)\not\sim_\Gamma\sigma_2(b)$ over $\BQ$ at a general point $b\in B$ for two sections $\sigma_i:B\hookrightarrow X$ of $X/B$,
we only need to find $s\in H^0(U, \pi_* \Omega_X^N)$ satisfying
\begin{equation}\label{COMVE421}
(d\sigma_1) \sigma_1^* s - (d\sigma_2) \sigma_2^* s \ne 0
\end{equation}
over some open dense subset $U\subset B$. The existence of such $s$ is guaranteed if $\sigma_i(b)$ impose independent conditions
on $H^0(X_b, \Omega_X^N)$ for $b\in B$ general. This observation leads to the following:

\begin{prop}\label{COMVPROPVA}
Let $\pi: X\to B$ be a smooth and projective morphism
from \red{a smooth} $X$ onto a smooth variety $B$ of
$\dim B = N$. Suppose that $\Omega_X^N \otimes \CO_{X_b}$
is weakly very ample on $X_b$ for $b\in B$ general. Then $R_{X_b,p,\Gamma} = \emptyset$ for $b\in B$
very general and all $p\in X_b$, where $\Gamma$ is a fixed smooth projective \red{curve} with two fixed points $0\ne \infty$.
More generally,
\begin{equation}\label{COMVE005}
\begin{aligned}
R_{X_b,p,\Gamma} &\subset
\Big\{
q\in X_b: q\ne p \text{ and } \{p,q\} \text{ does not impose independent}\\
&\hspace{114pt} \text{conditions on } H^0(X_b, \Omega_X^N)
\Big\}
\end{aligned}
\end{equation}
for $b\in B$ very general.
\end{prop}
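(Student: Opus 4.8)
The plan is to reduce the proposition to \lemref{COMVLEM001} by spreading out the $\Gamma$-equivalences over families generically finite over $B$. First, \eqref{COMVE005} implies the first assertion: under the hypothesis on independent conditions the set on the right of \eqref{COMVE005} is empty for $b$ general, so $R_{X_b,p,\Gamma}=\emptyset$ for $b$ very general and all $p$; hence it suffices to prove \eqref{COMVE005}. Put $W=(X\times_B X)\setminus\Delta$, with projections $\mathrm{pr}_1,\mathrm{pr}_2\colon W\to X$ and structure map $\psi\colon W\to B$, and let $\mathcal{S}\subset W$ be the locus of $(b,p,q)$ with $q\in R_{X_b,p,\Gamma}$. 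For each $N_0\in\BZ^{+}$ and each Hilbert polynomial $P$, the curves $\xi$ on $X_b\times\Gamma$ finite flat over $\Gamma$ with $\xi_0-\xi_\infty=N_0(p-q)$ are parametrized by a locally closed subscheme of the relative Hilbert scheme of $X\times_B\Gamma$ over $B$, whose image $\mathcal{S}_{N_0,P}$ in $W$ is constructible; since $\mathcal{S}=\bigcup_{N_0,P}\mathcal{S}_{N_0,P}$, we may write $\mathcal{S}=\bigcup_i\mathcal{S}_i$ as a countable union of irreducible locally closed subsets, each contained in some $\mathcal{S}_{N_0,P}$. Fix a dense open $B^{\mathrm{gen}}\subset B$ over which $\pi_*\Omega_X^{N}$ is locally free and $h^{0}(X_b,\Omega_X^{N})$ is constant. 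The $\mathcal{S}_i$ not dominating $B$ have images in a countable union of proper closed subsets of $B$, so for $b$ very general every $(b,p,q)\in\mathcal{S}$ lies on some $\mathcal{S}_i$ dominating $B$. It is therefore enough to prove: for each $\mathcal{S}_i$ dominating $B$, every point over $B^{\mathrm{gen}}$ is \emph{bad}, meaning that the evaluation $H^{0}(X_b,\Omega_X^{N})\to(\Omega_X^{N})_p\oplus(\Omega_X^{N})_q$ is not surjective. Badness is the vanishing of the maximal minors of the evaluation morphism of vector bundles on $W|_{B^{\mathrm{gen}}}$, hence a closed condition; as $\mathcal{S}_i$ is irreducible, it suffices to show that a general point of $\mathcal{S}_i$ is bad.

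So fix $\mathcal{S}_i$ dominating $B$ and a general point $w\in\mathcal{S}_i$. By a standard construction I would choose an irreducible subvariety $C\subset W$ with $w\in C$, $C\cap\mathcal{S}_i$ dense in $C$, $\dim C=\dim B=N$, and $C\to B$ dominant; since $C\subset W$, the two maps $C\to X$ are everywhere distinct. Let $\phi\colon T\to B$ be the composition of a resolution $T\to C$ with $C\to B$. Shrinking $T$ to a dense open $T^{\flat}$ over $C\cap\mathcal{S}_i$ with $T^{\flat}$ smooth, $\phi|_{T^{\flat}}$ \'etale, and $\pi_*\Omega_{X_{T^{\flat}}}^{N}$ locally free with constant $h^{0}$ (where $X_{T^{\flat}}=X\times_B T^{\flat}$), we obtain disjoint sections $\sigma_1,\sigma_2\colon T^{\flat}\to X_{T^{\flat}}$ over the two points; as $T^{\flat}$ maps into $\mathcal{S}_i\subset\mathcal{S}_{N_0,P}$, the cycle $Z:=N_0\sigma_1-N_0\sigma_2\in\mathcal{Z}^{n}(X_{T^{\flat}}/T^{\flat})$ has $Z_t\sim_\Gamma 0$ for all $t\in T^{\flat}$. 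Now \lemref{COMVLEM001} gives $\delta Z=0$, hence $\delta\sigma_1=\delta\sigma_2$ in $\Hom(\pi_*\Omega_{X_{T^{\flat}}}^{N},K_{T^{\flat}})$. Fix $t\in T^{\flat}$, put $b=\phi(t)$, and let $\bar p,\bar q\in X_b$ be the images of $\sigma_1(t),\sigma_2(t)$ in $X$. Since $\phi$ is \'etale at $t$, $X_{T^{\flat}}\to X$ is \'etale near $\sigma_1(t),\sigma_2(t)$, so there $\Omega_{X_{T^{\flat}}}^{N}$ pulls back to $\Omega_X^{N}$; base change identifies $\pi_*\Omega_{X_{T^{\flat}}}^{N}\otimes k(t)=H^{0}(X_b,\Omega_X^{N})$ and $K_{T^{\flat}}\otimes k(t)=(K_B)_b$, under which $\delta\sigma_1|_t=a_{\bar p}\circ\mathrm{ev}_{\bar p}$ and $\delta\sigma_2|_t=a_{\bar q}\circ\mathrm{ev}_{\bar q}$, where $\mathrm{ev}_{\bar p}\oplus\mathrm{ev}_{\bar q}\colon H^{0}(X_b,\Omega_X^{N})\to(\Omega_X^{N})_{\bar p}\oplus(\Omega_X^{N})_{\bar q}$ is evaluation and $a_{\bar p}\colon(\Omega_X^{N})_{\bar p}\to(K_B)_b$ is the $N$th exterior power of $d\sigma_1|_t$. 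Because $\sigma_1$ is a section, $d\sigma_1|_t$ is (split) surjective, so $a_{\bar p}$ is surjective onto $(K_B)_b$, and likewise $a_{\bar q}$.

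From $a_{\bar p}\circ\mathrm{ev}_{\bar p}=a_{\bar q}\circ\mathrm{ev}_{\bar q}$ we conclude that the image of $\mathrm{ev}_{\bar p}\oplus\mathrm{ev}_{\bar q}$ is annihilated by the nonzero functional $(u,v)\mapsto a_{\bar p}(u)-a_{\bar q}(v)$ on $(\Omega_X^{N})_{\bar p}\oplus(\Omega_X^{N})_{\bar q}$; hence this evaluation is not surjective, i.e. $(b,\bar p,\bar q)\in\mathcal{S}_i$ is bad. As $t$ varies over $T^{\flat}$ these points are dense in $C$, and badness is closed over $B^{\mathrm{gen}}$, so every point of $C$ over $B^{\mathrm{gen}}$ is bad; in particular $w$ is bad, which is what we needed. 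The step I expect to be the main obstacle is the bookkeeping of this spreading-out: exhibiting $\mathcal{S}$ as a countable union of constructible pieces, ensuring a very general $b$ sees only the pieces dominating $B$, and — most importantly — cutting down to a family $T$ that is generically finite over $B$, so that $\Omega_{X_T}^{N}$ is \'etale-locally a pullback of $\Omega_X^{N}$ and \lemref{COMVLEM001} can be matched with the hypothesis on $H^{0}(X_b,\Omega_X^{N})$. The remaining computation is short; its content is that $\delta Z=0$ does not just obstruct the existence of a separating section but produces the explicit functional $(u,v)\mapsto a_{\bar p}(u)-a_{\bar q}(v)$ witnessing the failure of independent conditions.
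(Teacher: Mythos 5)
Your proposal is correct and takes essentially the same route as the paper: spread the $\Gamma$-equivalence out into a pair of disjoint sections over a generically finite cover of $B$, apply \lemref{COMVLEM001} to get $\delta Z=0$, and read this off as the failure of $\{p,q\}$ to impose independent conditions on $H^0(X_b,\Omega_X^N)$. Your final step (the nonzero functional $(u,v)\mapsto a_{\bar p}(u)-a_{\bar q}(v)$ annihilating the image of the evaluation map) is just the contrapositive of the paper's choice of a separating section $s$ in \eqref{COMVE002}--\eqref{COMVE015}, and your countable decomposition of $\mathcal S$ matches the paper's locally noetherian subscheme $S_{X,\Gamma}$.
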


\begin{proof}
Suppose that there are a pair of points $p\ne q$ on a general fiber
$X_b$ such that $p \sim_\Gamma q$ over
$\BQ$ and $\{p,q\}$ imposes independent conditions on
$H^0(X_b, \Omega_X^N)$.
By a base change and shrinking $B$ to an affine variety, we may assume that
\begin{itemize}
\item
there exists two disjoint sections $P$ and $Q\subset X$ of $\pi: X\to B$ such that
$m(P_b - Q_b) \sim_\Gamma 0$ for some $m\in \BZ^+$ and all $b\in B$,
\item
$h^0(X_b, \Omega_X^N)$ is constant for all $b\in B$ and
\item
$P\sqcup Q$ imposes independent conditions on $H^0(\Omega_X^N)$.
\end{itemize}

Since $P\sqcup Q$ imposes independent conditions on $H^0(\Omega_X^N)$ and $\Omega_X^N$ is locally free, the map
\begin{equation}\label{COMVE001}
\begin{tikzcd}
H^0(\Omega_X^N\otimes I_P) \arrow[two heads]{r}{\sigma_Q^*} & H^0(\sigma_Q^* \Omega_X^N)
\end{tikzcd}
\end{equation}
is a surjection, where $\sigma_P$ and $\sigma_Q: B\hookrightarrow X$ are
the embeddings of $P$ and $Q$ to $X$, respectively.
Combining \eqref{COMVE001} with the pullback map of
$\sigma_Q: B\hookrightarrow X$ on differentials, we have a composition of two surjections
\begin{equation}\label{COMVE003}
\begin{tikzcd}
H^0(\Omega_X^N\otimes I_P) \arrow[two heads]{r}{\sigma_Q^*} & H^0(\sigma_Q^* \Omega_X^N)
\arrow[two heads]{r}{d\sigma_Q} & H^0(\Omega_B^N)
\end{tikzcd}
\end{equation}
where $d \sigma_P$ and $d \sigma_Q$ are the pullback maps induced by $\sigma_P$ and $\sigma_Q$ on the differentials, respectively.
Therefore, there exists
$s\in H^0(\Omega_X^N)$ such that
\begin{equation}\label{COMVE002}
\sigma_P^* s = 0 \text{ and } (d\sigma_Q) \sigma_Q^* s \ne 0.
\end{equation}
It follows that
\begin{equation}\label{COMVE015}
\langle \delta Z, s\rangle = (d\sigma_P) \sigma_P^* s - (d\sigma_Q) \sigma_Q^* s
= - (d\sigma_Q) \sigma_Q^* s \ne 0
\end{equation}
for $Z = P - Q$. On the other hand, $\delta Z = 0$ by
Lemma \ref{COMVLEM001}. Contradiction.

The above argument shows that no irreducible component of
\begin{equation}\label{COMVE006}
\begin{aligned}
S_{X,\Gamma} &=
\Big\{
(b,p,q): b\in B \text{ and } p\ne q\in X_b \text{ satisfy that }
p \sim_{\Gamma} q \text{ over } \BQ\\
&\hspace{66pt} \text{and } \{p,q\} \text{ imposes independent conditions}
\\
&\hspace{120pt} \text{on } H^0(X_b, \Omega_X^N)
\Big\}
\end{aligned}
\end{equation}
dominates $B$ via the projection $\xi: S_{X,\Gamma}\to B$.
Note that $S_{X,\Gamma}$ is a locally Noetherian subscheme of $X\times_B X$. Therefore,
for $b\in B\backslash \xi(S_{X,\Gamma})$, \eqref{COMVE005} holds.
\end{proof}

\begin{rem}\label{COMVREMPROPVA}
Note that the right hand side (RHS) of
\eqref{COMVE005} is a subscheme that does not depend on the choice of the triple $(\Gamma, 0, \infty)$.
\end{rem}

\begin{proof}[Proof of Theorem \ref{COMVTHMGAMMAEQUIV}]
Let $X\subset B\times \PP^{n+k}$ be the universal family of
complete intersections in $\PP^{n+k}$ of type $(d_1,d_2,...,d_k)$. By \cite{V1},
$\Omega_X^N$ is very ample on $X_b$ for $b\in B$ general
when $\sum (d_i-1) \ge 2n+2$. Then it follows from Proposition
\ref{COMVPROPVA} that no two distinct points on $X_b$
are $\Gamma$-equivalent over $\BQ$
for $b\in B$ very general.
\end{proof}

The rest of the paper will be devoted to the case $k=1$ and $d_1 = 2n+2$.

\section{Positivity of the sheaf of holomorphic $N$-forms}

Throughout this section, unless otherwise stated, we let $\pi: X\to B$ be a smooth projective morphism of relative dimension $n$ from smooth $X$ onto a smooth variety $B$ of $\dim B = N$.

\subsection{Ampleness of $\Omega_X^N$}

By Proposition \ref{COMVPROPVA}, we can prove that no two distinct points are $\Gamma$-equivalent on a very general fiber $X_b$ of $X/B$ if $\Omega_X^N$ is weakly very ample when restricted to $X_b$. We observe that
\begin{equation}\label{COMVE823}
\Omega_X^N \otimes \pi^* K_B^{-1} \cong T_X^n \otimes K_{X/B}
\end{equation}
using the pairing $\Omega_X^N\otimes \Omega_X^n \to K_X$.

When $X\subset \PP^{n+k}\times B$
is a family of complete intersections in $\PP^{n+k}$ of type $(d_1,d_2,...,d_k)$,
we have
\begin{equation}\label{COMVE216}
\begin{aligned}
\Omega_X^N \otimes \pi^* K_B^{-1} &\cong T_X^n \otimes \CO_X\left(
\sum (d_i-1) - (n+1)
\right)\\
&\cong T_X^n(n) \otimes \CO_X\left(
\sum (d_i-1) - (2n+1)
\right)
\end{aligned}
\end{equation}
where $\CO_X(1)$ is the pullback of the hyperplane bundle on $\PP^{n+k}$
and
$$
T_X^n(n) = \wedge^n T_X\otimes \CO_X(n) = \wedge^n (T_X(1)).
$$

It is very easy to verify the following simple facts:
\begin{enumerate}
\item The quotients of globally generated/(weakly) very ample coherent sheaves are also globally generated/(weakly) very ample.
\item If $\f$ is globally generated and $\g$ is globally generated/(weakly) very ample, then $\f\otimes \g$ is also globally generated/(weakly) very ample.
\item Combining (1) and (2), we see that the wedge and symmetric products of a
globally generated/(weakly) very ample coherent sheaf are also globally generated/(weakly) very ample.
\end{enumerate}

When $X\subset \PP^{n+k}\times B$ is a versal family of complete intersections,
Voisin proved that $T_X(1) = T_X\otimes \CO_X(1)$ is globally generated
on $X_b$, generalizing Herbert Clemens' result for hypersurfaces \cite{C} (see also \cite{E1} and \cite{E2}). Thus, we can conclude that
$\Omega_X^N$ is very ample on $X_b$ when $\sum (d_i - 1) > 2n+1$,
using the simple facts on global generation and very ampleness as above.

When $\sum (d_i - 1) = 2n+1$, we naturally ask whether
Voisin's result can be improved to show that
$\Omega_X^N \otimes \pi^* K_B^{-1} = T_X^n(n)$ is weakly very ample on $X_b$.
Voisin actually had a stronger conjecture
that $T_X^2(1) = (\wedge^2 T_X) \otimes \CO_X(1)$
is globally generated on $X_b$
\cite[Question 2.1]{V2}.

Unfortunately, $T_X^2(1)$ fails to be globally generated \cite{Voisin98}. Even the weaker assertion that $T_X^n(n)$ is weakly very ample is very likely to fail. Thus, in order to tackle the case $\sum (d_i - 1) = 2n+1$,
we cannot rely on the very ampleness of $\Omega_X^N$. Instead, we need to develop some refined criteria to show that $\delta (\sigma_1(B) - \sigma_2(B)) \ne 0$ for two sections $\sigma_i$ of $X/B$. These criteria, while still depending on certain positivity of $\Omega_X^N$, do not require it to be weakly very ample.

\subsection{Differential map $d\sigma$}\label{COMVSSDMP}

A closer examination of the proof of Proposition \ref{COMVPROPVA} shows that we do not really
need $\Omega_X^N$ to be weakly very ample on $X_b$. We only need find $s\in H^0(U, \pi_* \Omega_X^N)$ satisfying
\eqref{COMVE421}.
This is much weaker than the requirement that $p_1 = \sigma_1(b)$ and $p_2 = \sigma_2(b)$ impose
independent conditions on $H^0(X_b, \Omega_X^N)$ for $b$ general. For one thing,
$(d\sigma_1) \sigma_1^* s - (d\sigma_2) \sigma_2^* s = 0$ imposes only one condition on $\Gamma_b(\Omega_X^N) = H^0(X_b, \Omega_X^N)$.

Let $\Gamma_b(d\sigma_i)$ be the map induced by $d\sigma_i$ on
$\Gamma_b(\Omega_X^N)$ as in
\begin{equation}\label{COMVE145}
\begin{tikzcd}[column sep=36pt]
\Gamma_b(T_X^n\otimes K_X)\arrow[equal]{d}\\
\Gamma_b(\Omega_X^N) \arrow{r}{d\sigma_1 \oplus d\sigma_2} &
\Gamma_b\left(K_{\sigma_1(B)}\right)\oplus \Gamma_b\left(K_{\sigma_2(B)}\right).
\end{tikzcd}
\end{equation}
Clearly, \eqref{COMVE421} holds for some $s\in H^0(U, \pi_* \Omega_X^N)$ if
\begin{equation}\label{COMVE482}
\ker(\Gamma_b(d\sigma_1)) \ne \ker(\Gamma_b(d\sigma_2))
\end{equation}
holds at a general point $b\in B$. More precisely, as long as \eqref{COMVE482}
holds at a point $b\in B$ such that $h^0(X_t, \Omega_X^N)$ is locally constant for $t$ in an open
neighborhood of $b$, we can find a section $s_b\in \Gamma_b(\Omega_X^N)$ with the property
\begin{equation}\label{COMVE007}
(d\sigma_1) s_b - (d\sigma_2) s_b \ne 0
\end{equation}
and this $s_b$ can be extended to a section $s\in H^0(U, \pi_* \Omega_X^N)$ over an open neighborhood $U$ of $b$ satisfying \eqref{COMVE421}.

Therefore, to show that $\sigma_i(b)$ are not $\Gamma$-equivalent over $\BQ$ on a general fiber $X_b$, we just have to prove \eqref{COMVE482}.
Let us formalize this observation in the following proposition:

\begin{prop}\label{COMVPROPUNI}
Let $X$ be a smooth projective family of varieties over a smooth variety $B$ of $\dim B = N$ and let $\sigma_i: B\to X$ be two disjoint sections of $X/B$ for $i=1,2$.
Then $\sigma_1(b)$ and $\sigma_2(b)$ are not $\Gamma$-equivalent over $\BQ$ on $X_b$ for $b\in B$ general if \eqref{COMVE482} holds
at a point $b$ where $h^0(X_t, \Omega_X^N)$ is locally constant in $t$.
\end{prop}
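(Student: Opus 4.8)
The plan is to run the argument of Proposition~\ref{COMVPROPVA} with the weaker input \eqref{COMVE482} in place of the hypothesis that two points impose independent conditions, combining the infinitesimal invariant $\delta$ of Definition~\ref{COMVDEF000} with Lemma~\ref{COMVLEM001}. Let $b_0\in B$ be a point at which \eqref{COMVE482} holds and $h^0(X_t,\Omega_X^N)$ is locally constant in $t$, and put $Z=\sigma_1(B)-\sigma_2(B)\in\Z^n(X/B)$. Since the two sections are disjoint, $\supp(Z)$ is flat over $B$, and since $\pi$ restricts to an isomorphism $\sigma_i(B)\xrightarrow{\sim}B$, Definition~\ref{COMVDEF000} identifies $\delta Z\in\Hom(\pi_*\Omega_X^N,K_B)$ with the homomorphism whose fibre over $b$ sends $s$ to $(d\sigma_1)\sigma_1^*s-(d\sigma_2)\sigma_2^*s$; that is, $\langle\delta Z,\cdot\rangle$ over $b$ is exactly the difference of the two maps in \eqref{COMVE145}. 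The first step is to make $\pi_*\Omega_X^N$ locally free on all of $B$: since $h^0(X_t,\Omega_X^N)$ is upper semicontinuous in $t$ and locally constant near $b_0$, that common value must be the generic value of $h^0$, whence $b_0$ lies in the dense open locus $B^{\circ}\subset B$ where $h^0$ attains this generic value; replacing $B$ by $B^{\circ}$, the sheaf $\pi_*\Omega_X^N$ becomes locally free with formation commuting with base change, so its fibre over each $b$ is $\Gamma_b(\Omega_X^N)$.

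Next I would deduce from \eqref{COMVE482} that $\delta Z\ne 0$. Since $\ker(d\sigma_1\circ\Gamma_{b_0})$ and $\ker(d\sigma_2\circ\Gamma_{b_0})$ are distinct subspaces of $\Gamma_{b_0}(\Omega_X^N)$, one may pick $s_{b_0}$ lying in one of them but not the other; then $(d\sigma_1)\sigma_1^*s_{b_0}-(d\sigma_2)\sigma_2^*s_{b_0}\ne 0$ in $(K_B)_{b_0}$, which says exactly that $\delta Z$ is nonzero at $b_0$. As $\Hom(\pi_*\Omega_X^N,K_B)$ is now locally free, hence torsion free, and $B$ is irreducible, it follows that $\delta Z\ne 0$, and therefore $\delta(mZ)=m\,\delta Z\ne 0$ for every $m\in\BZ^+$.

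Finally I would argue by contradiction, assuming the conclusion fails, so that the locus $R=\{b\in B:\sigma_1(b)\sim_\Gamma\sigma_2(b)\text{ over }\BQ\}$ meets every dense open subset of $B$, i.e.\ is Zariski dense. Writing $R=\bigcup_{m\ge 1}R_m$ with $R_m=\{b:m(\sigma_1(b)-\sigma_2(b))\sim_\Gamma 0\}$, a Hilbert-scheme argument like the one in the proof of Lemma~\ref{COMVLEM001} shows that the part of $R_m$ realized by bounding cycles in $X_b\times\Gamma$ of bounded degree is Zariski closed in $B$ --- it is the image under a proper morphism of an appropriate relative Chow scheme --- so each $R_m$, hence $R$, is a countable union of closed subsets of $B$. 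Since $R$ is Zariski dense and $B$ is irreducible, one of these closed subsets is all of $B$, so there is $m\in\BZ^+$ with $m(\sigma_1(b)-\sigma_2(b))\sim_\Gamma 0$ for every $b\in B$. Lemma~\ref{COMVLEM001}, applied to $mZ\in\Z^n(X/B)$ (support flat over $B$, $\pi_*\Omega_X^N$ locally free, all fibres $\Gamma$-trivial), then gives $\delta(mZ)=0$, contradicting the previous paragraph; hence the conclusion holds. I expect the only delicate point to be this countability step --- verifying that the $\Gamma$-equivalence locus $R_m$ is genuinely a countable union of Zariski-closed subsets --- since it is exactly what upgrades ``Zariski dense'' to ``valid for all $b$'' and lets Lemma~\ref{COMVLEM001} be invoked over the whole base without a further base change; it is also the reason the statement can be phrased for $b$ general rather than very general.
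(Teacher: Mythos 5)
Your first three steps are exactly the paper's intended argument (the paper gives no separate proof of Proposition \ref{COMVPROPUNI}: it is presented as a formalization of the discussion around \eqref{COMVE421} and \eqref{COMVE007}, with the contradiction supplied by Lemma \ref{COMVLEM001} as in the proof of Proposition \ref{COMVPROPVA}), and you fill in the semicontinuity and base-change details correctly: \eqref{COMVE482} at $b_0$ produces a local section $s$ with \eqref{COMVE421}, hence $\delta Z\ne 0$ for $Z=\sigma_1(B)-\sigma_2(B)$, hence $\delta(mZ)\ne 0$ for every $m\in\BZ^+$.

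The final step, which you yourself flag, contains a genuine error. Over $\BC$ a Zariski-dense countable union of \emph{proper} closed subsets of an irreducible variety is perfectly possible (e.g.\ $\BQ\subset\A^1$), so ``$R$ is Zariski dense and is a countable union of closed subsets'' does \emph{not} imply that one of those closed subsets is all of $B$; that implication only holds under the stronger hypothesis $R=B$ (by Baire category in the analytic topology). What your argument actually proves is that \emph{each} closed piece $R_{m,d}$ (equivalence realized with multiplier $m$ by a bounding cycle of bounded degree) is a proper closed subset of $B$ --- for if $R_{m,d}=B$ then Lemma \ref{COMVLEM001} applies to $mZ$ and gives $\delta(mZ)=0$, a contradiction --- and hence that $\sigma_1(b)\not\sim_\Gamma\sigma_2(b)$ over $\BQ$ for $b$ outside a countable union of proper closed subsets, i.e.\ for $b$ \emph{very general}. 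This matches the strength of the paper's own Proposition \ref{COMVPROPVA} and is all that is ever used: in the application the proposition is invoked contrapositively for sections assumed $\Gamma$-equivalent identically on a neighborhood of $b$, which is precisely the case $R_{m,d}=B$ that Lemma \ref{COMVLEM001} handles. So the defect lies in the last logical reduction (and in reading the word ``general'' in the statement literally rather than as ``very general''), not in the analytic core of your argument; replacing ``one of these closed subsets is all of $B$'' by the component-by-component argument just described repairs the proof.
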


\subsection{Criterion for two fixed sections}

In the rest of this section, we assume that $X\subset B\times P$ is a smooth family of subvarieties in some smooth projective variety $P$. Usually, we take $P$ to be some projective space. But some results apply to arbitrary $P$.
Here we strive for the greatest generality, although we just need to apply these results to hypersurfaces in $\PP^n$.

To apply Proposition \ref{COMVPROPUNI}, we need an explicit description of the differential maps $d\sigma_i$.
They can be made very explicit if $X\subset Y = B\times P$ is a family of varieties in a projective space $P$
passing through two fixed points $p_i\in P$ and $\sigma_i(b)\equiv p_i$ for $i=1,2$. On the other hand, for an
arbitrary family $X\subset Y$ with two sections $\sigma_i$ over $B$, we can always apply an automorphism
$\lambda\in B\times \Aut(P)$, after a base change, fiberwise to $Y/B$ such that $\lambda\circ \sigma_i(b) \equiv p_i$
for two fixed points $p_i\in P$; thus, to test \eqref{COMVE482} for a general fiber $X_b$ of $X/B$, it suffices to test it
for a general fiber $\widehat{X}_b$ of $\widehat{X}/B$, $\widehat{X} = \lambda(X)$ and $\widehat{\sigma}_i =
\lambda\circ \sigma_i$. Let us first consider families $X\subset B\times P$ with two fixed sections $\sigma_i(b) \equiv p_i$.

To set it up, we let $P = \PP^{r}$ and fix two points $p_1\ne p_2$ in $P$. We let $X\subset Y = B\times P$ be a
closed subvariety of $Y$ that is flat over $B$ with fibers $X_b$
containing $p_1$ and $p_2$ for all $b\in B$.
We assume that $X$ and $B$ are smooth of $\dim X = N + n$ and
$\dim B = N$, respectively.
We have two sections $\sigma_i: B\to X$ sending $\sigma_i(b) = p_i$ for all $b\in B$ and $i=1,2$.

To state our next proposition on the differential map $d\sigma$,
we need to introduce the filtration $F^\bullet \Omega_X$ associated to the fibration $X/B$.

For a surjective morphism $f: W\to B$ with $B$ smooth, we have \red{a Leray} filtration
\begin{equation}\label{COMVE118}
\begin{aligned}
\Omega_W^m &= F^0 \Omega_W^m \supset F^1\Omega_W^m \supset ... \supset F^{m+1}\Omega_W^m = 0\\
&\text{ with } \text{Gr}_F^p \Omega_W^m = \frac{F^p \Omega_W^m}{F^{p+1} \Omega_W^m}
= f^*\left(\wedge^p \Omega_B\right) \otimes \wedge^{m-p} \Omega_{W/B}
\end{aligned}
\end{equation}
for $\Omega_W^m = \wedge^m \Omega_W$ derived from the short exact sequence
\begin{equation}\label{COMVE111}
\begin{tikzcd}
0 \arrow{r} & f^*\Omega_B\arrow{r} & \Omega_W\arrow{r} & \Omega_{W/B}\arrow{r} & 0.
\end{tikzcd}
\end{equation}
Note that $F^p$ is an exact functor.

For $\pi_B: Y\to B$ with $Y=B\times P$, $F^p \Omega_Y^m$ is simply
\begin{equation}\label{COMVE064}
F^p \Omega_Y^m = \bigoplus_{i\ge p} \pi_B^* \Omega_B^i \otimes \pi_P^* \Omega_P^{m-i}
\end{equation}
and we have natural projections $\Omega_Y^m\to F^p\Omega_Y^m$, where $\pi_B$ and $\pi_P$ are the projections of $Y$ to $B$ and $P$, respectively.

We have the so-called adjunction sequence
\begin{equation}\label{COMVE069}
\begin{tikzcd}
0 \arrow{r} & T_{X} \arrow{r} & T_{Y} \otimes \CO_X \arrow{r}{\eta} & \N_{X}
	\arrow{r} & 0
\end{tikzcd}
\end{equation}
associated to $X\subset Y$, where $\N_X$ is the normal bundle of $X$ in $Y$.
From this, we obtain a left exact sequence
\begin{equation}\label{COMVE412}
\begin{tikzcd}
0 \arrow{r} & T_X^m \arrow{r} & T_Y^m \arrow{r}{\eta_m} &
T_Y^{m-1}\otimes \N_{X}
\end{tikzcd}
\end{equation}
on $X_b$, where $\eta_m$ is actually the map in the generalized Koszul complex
\begin{equation}\label{COMVE415}
\begin{aligned}
\wedge^m T_Y \otimes \CO_X & \xrightarrow{\eta_m}
\wedge^{m-1} T_Y\otimes \N_X \xrightarrow{}
\wedge^{m-2} T_Y\otimes \Sym^2 \N_X\\
& \xrightarrow{} ... \xrightarrow{} T_Y \otimes \Sym^{m-1} \N_X
\xrightarrow{} \Sym^{m} \N_X \xrightarrow{} 0
\end{aligned}
\end{equation}
of $\wedge^{m-\bullet} T_Y\otimes \Sym^\bullet \N_X$ induced by $\eta$.

Setting $m=n$ in \eqref{COMVE412}, we have
\begin{equation}\label{COMVE107}
\begin{tikzcd}[column sep=13pt]
T_X^n\otimes K_X\arrow[equal]{d}\arrow[hook]{r} & T_Y^n\otimes K_X \arrow[equal]{d}\arrow{r}{\eta_n} &
T_Y^{n-1}\otimes K_X\otimes \N_{X}\arrow[equal]{d}\\
\Omega_X^N \arrow[hook]{r} & \Omega_Y^{N+k}\otimes \det(\N_X) \arrow{r} &
\Omega_Y^{N+1+k}\otimes \det(\N_X)\otimes \N_{X}
\end{tikzcd}
\end{equation}
where $\det(\N_X) = \wedge^{k} \N_X$ for $k = \dim Y - \dim X$. Let us first prove:

\begin{prop}\label{COMVPROPKEY}
Let $X\subset Y = B\times P$ be a smooth projective family of varieties in a smooth projective variety $P$
passing through a fixed point $p\in P$ over a smooth variety $B$ with the section $\sigma: B\to X$ given by
$\sigma(b) = p$ for $b\in B$.
Then the diagram
\begin{equation}\label{COMVE414}
\begin{tikzcd}[column sep=18pt]
\Omega_X^N \arrow[hook]{r} \arrow{d}{d\sigma} & \Omega_Y^{N+k}\otimes \det(\N_X) \arrow{r}\arrow{d} &
\Omega_Y^{N+k+1}\otimes \det(\N_X)\otimes \N_{X}\arrow{d}\\
\Omega_{\sigma(B)}^N \arrow[hook]{r} &
F^N \Omega_Y^{N+k}\otimes \det(\N_X)\Big|_{\sigma(B)} \arrow{r} &
F^N \Omega_Y^{N+k+1}\otimes \det(\N_X)\otimes \N_{X} \Big|_{\sigma(B)}
\end{tikzcd}
\end{equation}
commutes and has left exact rows, where $N=\dim B$, $k = \dim Y - \dim X$, $\det(\N_X) = \wedge^k \N_X$ and
the vertical maps in the second and third columns are induced by the projections $\Omega_Y^\bullet \to
F^N \Omega_Y^\bullet$ followed by the restrictions to $\sigma(B)$.
\end{prop}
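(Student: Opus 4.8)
The plan is to realise both rows of \eqref{COMVE414} as instances of a single Koszul-type left exact sequence, and then to verify the two squares by an explicit computation in local coordinates. The elementary ingredient is: for a short exact sequence $0\to E\to F\to G\to 0$ of locally free sheaves with $\rank E=k$ and any $m\ge k$, the sequence
\begin{equation*}
0\longrightarrow \wedge^{m-k}G\otimes\det E\longrightarrow \wedge^{m}F\xrightarrow{\ \delta\ }\wedge^{m+1}F\otimes E^\vee
\end{equation*}
is exact at its first two terms; here the first map identifies $\wedge^{m-k}G\otimes\det E$ with the image of $\wedge^{k}E\otimes\wedge^{m-k}F\to\wedge^m F$ (the deepest nonzero step of the filtration of $\wedge^m F$ by the number of factors drawn from $E$), and $\delta$ is the Koszul differential given by wedging with the tautological section of $F\otimes E^\vee$ determined by $E\hookrightarrow F$. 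This follows from a local frame $e_1,\dots,e_k$ of $E$ together with lifts $f_1,f_2,\dots$ of a frame of $G$: one computes $\delta(e_S\wedge f_T)=\sum_{i\notin S}\pm(e_i\wedge e_S\wedge f_T)\otimes e_i^\vee$, so $\ker\delta$ is spanned by the $e_1\wedge\cdots\wedge e_k\wedge f_T$, which is precisely the (visibly injective) image of $\det E\otimes\wedge^{m-k}G$; the same computation shows $\delta$ is independent of the frame.

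The top row of \eqref{COMVE414} is then this sequence applied to the conormal sequence $0\to\N_X^\vee\to\Omega_Y\otimes\CO_X\to\Omega_X\to 0$ with $m=N+k$, tensored with $\det\N_X$: its first map is the inclusion in the statement and its second is the Koszul differential obtained by wedging with the tautological section of $\N_X^\vee\otimes\N_X$. For the bottom row I would use that $\sigma(B)=B\times\{p\}$. Since $\Omega_B^i=0$ for $i>N=\dim B$, formula \eqref{COMVE064} gives $F^N\Omega_Y^{N+k}=\pi_B^*K_B\otimes\pi_P^*\Omega_P^{k}$ and $F^N\Omega_Y^{N+k+1}=\pi_B^*K_B\otimes\pi_P^*\Omega_P^{k+1}$, whose restrictions to $\sigma(B)$ have $P$-factors the trivial bundles with fibres $\Omega_P^k|_p$, $\Omega_P^{k+1}|_p$, while $\Omega_{\sigma(B)}^N=K_{\sigma(B)}$ matches the $\pi_B^*K_B$-factor. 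Moreover $\N_X^\vee|_{\sigma(B)}$ sits inside $\N_{\sigma(B)/Y}^\vee=\pi_P^*\Omega_P|_{\sigma(B)}$ as subsheaves of $\Omega_Y\otimes\CO_{\sigma(B)}$; equivalently, a local frame of $\N_X^\vee$ restricts along $\sigma(B)$ to forms with vanishing $B$-component. Hence the bottom row is exactly what one gets by applying the lemma fibrewise over $P$ to the rank-$k$ subspace $\N_X^\vee|_{(b,p)}\subset\Omega_{P,p}$ with $m=k$, then tensoring by the line bundle $K_B$ over $B\cong\sigma(B)$; in particular it is left exact, with first map the tautological inclusion $\det\N_X^\vee|_{\sigma(B)}\hookrightarrow\pi_P^*\Omega_P^k|_{\sigma(B)}$ (tensored with $\det\N_X$) and second map the fibrewise Koszul differential.

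It remains to check that the two squares commute. For the right square the vertical maps are ``project onto the $F^N$-summand, then restrict to $\sigma(B)$''; since a local frame of $\N_X^\vee$ has no $B$-component along $\sigma(B)$, wedging with it does not change the number of $\Omega_B$-factors there and so commutes with the projection onto $F^N$ and with restriction to $\sigma(B)$. Together with the frame-independence of $\delta$, this gives commutativity of the right square after a one-line verification. For the left square I would pick coordinates $t_1,\dots,t_N$ on $B$ near $b$ and $z_1,\dots,z_r$ on $P$ near $p$, with $r=\dim P$, $n:=r-k$ and $p$ the origin, so that near $(b,p)$ the subvariety $X$ is the graph $z_{n+j}=g_j(t,z_1,\dots,z_n)$ for $1\le j\le k$ with $g_j(t,0)\equiv 0$ (possible since $p\in X_b$ for all $b$ and $X/B$ is smooth). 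Then $f_j=z_{n+j}-g_j$ cut out $X$, and $df_j=dz_{n+j}-\sum_a(\partial_{t_a}g_j)\,dt_a-\sum_i(\partial_{z_i}g_j)\,dz_i$ has vanishing $dt$-part on $\sigma(B)=\{z_1=\cdots=z_n=0\}$ because $g_j(t,0)\equiv 0$. Given $\bar\omega\in\Omega_X^N$, lift it to $\omega\in\Omega_Y^N\otimes\CO_X$ involving only $dt_1,\dots,dt_N,dz_1,\dots,dz_n$ and write $\omega=h\,dt_1\wedge\cdots\wedge dt_N+\omega'$ with $\omega'$ of smaller $dt$-degree; the image of $\bar\omega$ in the top row is $(df_1\wedge\cdots\wedge df_k\wedge\omega)\otimes(\partial_{f_1}\wedge\cdots\wedge\partial_{f_k})$, and after projecting onto $F^N$ (which amounts to keeping the part with exactly $N$ of the factors equal to $dt$'s) and restricting to $\sigma(B)$, every term in which some $df_j$ contributes a $dt$ carries a factor $\partial_{t_a}g_j$ and dies, leaving $h(t,0)\,dt_1\wedge\cdots\wedge dt_N\wedge(df_1\wedge\cdots\wedge df_k)|_{\sigma(B)}\otimes(\partial_{f_1}\wedge\cdots\wedge\partial_{f_k})$. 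Under $F^N\Omega_Y^{N+k}=\pi_B^*K_B\otimes\pi_P^*\Omega_P^k$ this reads as $\sigma^*\bar\omega$ tensored with the tautological section $(df_1\wedge\cdots\wedge df_k)|_{\sigma(B)}\otimes(\partial_{f_1}\wedge\cdots\wedge\partial_{f_k})$ of $(\pi_P^*\Omega_P^k\otimes\det\N_X)|_{\sigma(B)}$, i.e.\ exactly the image of $\bar\omega$ under $d\sigma$ followed by the bottom-left map, so the left square commutes.

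I expect the only real difficulty to be bookkeeping rather than conceptual: identifying the $F^N$-part of $\Omega_Y^\bullet$ along the fixed section and checking that $\N_X^\vee$ restricts there to forms without $B$-component (this is what simultaneously makes the bottom row left exact and the right square commute), together with the coordinate computation for the left square, whose crux is that the mixed terms vanish thanks to $g_j(t,0)\equiv 0$. Signs and orderings in the Koszul identifications are routine but must be fixed consistently.
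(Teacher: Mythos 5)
Your proposal is correct and follows essentially the same route as the paper: left exactness of both rows via the Koszul complex of the conormal sequence, and commutativity of both squares from the single key observation that, because $p\in X_b$ for all $b$, the defining equations $f_j$ of $X$ satisfy $\partial f_j/\partial t\equiv 0$ along $\sigma(B)$, i.e.\ $\N_X^\vee\big|_{\sigma(B)}\subset \pi_P^*\Omega_P\big|_{\sigma(B)}$. Your treatment is if anything slightly more explicit (graph coordinates, frame computation for the Koszul lemma) than the paper's, but the substance is identical.
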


\begin{proof}
The rows of \eqref{COMVE414} are induced by Koszul complex \eqref{COMVE415} and hence left exact.

We want to point out that the diagram
\begin{equation}\label{COMVE493}
\begin{tikzcd}[column sep=18pt]
\Omega_Y^m\arrow{r}{\eta}\arrow{d} &
\Omega_Y^{m+1}\otimes \N_{X}\arrow{d}\\
F^l \Omega_Y^{m}\arrow{r} &
F^l \Omega_Y^{m+1}\otimes \N_{X}
\end{tikzcd}
\end{equation}
{\em does not} commute in general. However, it commutes when we restrict the bottom row to
$\sigma(B)$. That is, we claim that the diagram
\begin{equation}\label{COMVE494}
\begin{tikzcd}[column sep=18pt]
\Omega_Y^m\arrow{r}{\eta}\arrow{d}[left]{\rho_{m}} &
\Omega_Y^{m+1}\otimes \N_{X}\arrow{d}{\rho_{m+1}}\\
F^l \Omega_Y^{m}\Big|_{\sigma(B)}\arrow{r}{\eta_\sigma} &
F^l \Omega_Y^{m+1}\otimes \N_{X}\Big|_{\sigma(B)}
\end{tikzcd}
\end{equation}
commutes. Of course, this implies that the right square of \eqref{COMVE414} commute.

Let $(x_1,x_2,...,x_r)$ and $(t_1,t_2,...,t_N)$
be the local coordinates of $P$ and $B$, respectively. Let
$p = \{x_1=x_2=...=x_r=0\}$ and
\begin{equation}\label{COMVE495}
X=\{f_1(x,t)=f_2(x,t)=...=f_k(x,t)=0\}.
\end{equation}
Then $\eta$ is given by
\begin{equation}\label{COMVE496}
\eta(\omega) =(\omega\wedge df_1, \omega\wedge df_2, ...,
\omega\wedge df_k).
\end{equation}
Since $p\in X_b$ for all $b\in B$, we have $f_i(0,t)\equiv 0$. Hence
\begin{equation}\label{COMVE497}
\left.\frac{\partial f_i}{\partial t_j}\right|_{x=0} = 0
\end{equation}
for all $t$, $i=1,2,...,k$ and $j=1,2,...,N$. It follows that
\begin{equation}\label{COMVE498}
\begin{aligned}
&\quad \rho_{m+1}\circ \eta(\omega_1) = \rho_{m+1}(\omega_1\wedge df_1, \omega_1\wedge df_2, ...,
\omega_1\wedge df_k)\\
&=(\rho_{m+1}(\omega_1\wedge df_1), \rho_{m+1}(\omega_1\wedge df_2), ...,
\rho_{m+1}(\omega_1\wedge df_k))\\
&= 0 = \eta_\sigma \circ \rho_m(\omega_1)
\end{aligned}
\end{equation}
for all local sections
\begin{equation}\label{COMVE499}
\omega_1\in H^0(U,\bigoplus_{i < l} \pi_B^* \Omega_B^i \otimes \pi_P^* \Omega_P^{m-i}) \subset  H^0(U, \Omega_Y^{m}),
\end{equation}
where $U$ is an open subset of $Y$. Every $\omega\in H^0(U, \Omega_Y^{m})$
can be written as
\begin{equation}\label{COMVE550}
\omega = \omega_1 + \omega_2
\end{equation}
with $\omega_1$ given in \eqref{COMVE499} and $\omega_2\in H^0(U, F^l \Omega_Y^m)$. It is clear that
\begin{equation}\label{COMVE551}
\rho_{m+1}\circ \eta(\omega_2)
= \eta_\sigma \circ \rho_m(\omega_2).
\end{equation}
Combining \eqref{COMVE498} and \eqref{COMVE551}, we conclude that
\begin{equation}\label{COMVE552}
\rho_{m+1}\circ \eta(\omega)
= \eta_\sigma \circ \rho_m(\omega)
\end{equation}
and hence the diagram \eqref{COMVE494} commutes. It remains to prove that the left square of \eqref{COMVE414} commutes.

Note that $\Omega_X^N$ can be identified with the image of the map
\begin{equation}\label{COMVE553}
\begin{tikzcd}
\Omega_Y^N\otimes \CO_X \ar{r}{\theta} & \Omega_Y^{N+k}\otimes \det(\N_X)
\end{tikzcd}
\end{equation}
given by
\begin{equation}\label{COMVE554}
\theta(\omega) = \omega \wedge df_1\wedge df_2\wedge \cdots\wedge df_k\red{\otimes\frac{\partial}{\partial f_1}\wedge\cdots\wedge \frac{\partial}{\partial f_k}}.
\end{equation}
By \eqref{COMVE497} again, we see that the diagram
\begin{equation}\label{COMVE555}
\begin{tikzcd}
\Omega_Y^N\otimes \CO_X \ar{r}{\theta}\ar{d}{d\sigma} & \Omega_Y^{N+k}\otimes \det(\N_X)\ar{d}\\
F^N\Omega_Y^N\otimes \CO_X\Big|_{\sigma(B)} \ar{r} &
F^N \Omega_Y^{N+k}\otimes \det(\N_X)\Big|_{\sigma(B)}
\end{tikzcd}
\end{equation}
commutes. Thus, the diagram
\begin{equation}\label{COMVE556}
\begin{tikzcd}
\Omega_Y^N\otimes \CO_X \ar[bend left=20]{rr}{\theta}\ar{d}{d\sigma} \ar[two heads]{r} &
\Omega_X^N \ar[hook]{r} \ar{d}{d\sigma} &
 \Omega_Y^{N+k}\otimes \det(\N_X)\ar{d}\\
F^N\Omega_Y^N\otimes \CO_X\Big|_{\sigma(B)} \ar[equal]{r} &
\Omega_{\sigma(B)}^N \ar[hook]{r} &
F^N \Omega_Y^{N+k}\otimes \det(\N_X)\Big|_{\sigma(B)}
\end{tikzcd}
\end{equation}
commutes.
\end{proof}

Note that
\begin{equation}\label{COMVE420}
F^N \Omega_Y^{N+k} = \pi_B^* \Omega_B^N \otimes \pi_P^* \Omega_P^k
\end{equation}
Combining \eqref{COMVE107}, \eqref{COMVE414} and \eqref{COMVE420}, we obtain commutative diagrams
\begin{equation}\label{COMVE417}
\begin{tikzcd}[column sep=small]
T_X^n\otimes K_X\arrow{d}{d\sigma_i}\arrow[hook]{r} & T_Y^n\otimes K_X \arrow{d}{\alpha_{n,i}}\arrow{r}{\eta_n} &
T_Y^{n-1}\otimes K_X\otimes \N_{X}\arrow{d}{\alpha_{n-1,i}}\\
K_{\sigma_i(B)} \arrow[hook]{r} &
\pi_P^* T_P^n \otimes K_X \Big|_{\sigma_i(B)}\arrow{r} &
\pi_P^* T_P^{n-1} \otimes K_X \otimes \N_{X} \Big|_{\sigma_i(B)}
\end{tikzcd}
\end{equation}
with left exact rows for $i=1,2$. By the above diagram, we have
\begin{equation}\label{COMVE483}
\ker(\Gamma_b(d\sigma_i)) = \ker(\Gamma_b(\alpha_{n,i}))
\cap \ker(\Gamma_b(\eta_n))
\end{equation}
for $i=1,2$. Therefore,
\eqref{COMVE482} is equivalent to
\begin{equation}\label{COMVE416}
\boxed{\ker(\Gamma_b(\alpha_{n,1}))
\cap \ker(\Gamma_b(\eta_n))
\ne
\ker(\Gamma_b(\alpha_{n,2}))
\cap \ker(\Gamma_b(\eta_n))}.
\end{equation}

More explicitly, we can write $\Gamma_b(T_Y^n\otimes K_X)$ as
\begin{equation}\label{COMVE457}
\Gamma_b(T_Y^n\otimes K_X) = \Gamma_b(\pi_P^* T_P^n\otimes K_X) \oplus
\sum_{j<n} \Gamma_b(\pi_P^* T_P^j\otimes K_X)\otimes T_{B,b}^{n-j}.
\end{equation}
Then the kernel of $\Gamma_b(\alpha_{n,i})$ is
\begin{equation}\label{COMVE471}
\begin{aligned}
\ker(\Gamma_b(\alpha_{n,i})) &= \Gamma_b(\pi_P^*T_P^n \otimes K_X(-p_i))
\\
&\quad\oplus
\sum_{j<n} \Gamma_b(\pi_P^* T_P^{j}\otimes K_X)\otimes T_{B,b}^{n-j}
\end{aligned}
\end{equation}
for $i=1,2$, where $K_X(-p_i) = K_X\otimes I_{p_i}$ for $I_{p_i}$ the ideal sheaf of $p_i$.
So \eqref{COMVE416} is equivalent to
\begin{equation}\label{COMVE475}
\boxed{\begin{aligned}
&\quad \ker(\Gamma_b(\eta_n)) \cap (\Gamma_b(\pi_P^*T_P^n \otimes K_X(-p_1))
\oplus
\sum_{j<n} \Gamma_b(\pi_P^* T_P^{j}\otimes K_X)\otimes T_{B,b}^{n-j})
\\
&\ne
\ker(\Gamma_b(\eta_n)) \cap (\Gamma_b(\pi_P^*T_P^n \otimes K_X(-p_2))
\oplus
\sum_{j<n} \Gamma_b(\pi_P^* T_P^{j}\otimes K_X)\otimes T_{B,b}^{n-j})
\end{aligned}}
\end{equation}

Combining it with Proposition \ref{COMVPROPUNI}, we obtain the following criterion:

\begin{prop}\label{COMVPROPKEYCRITERION}
Let $X\subset Y = B\times P$ be a smooth projective family of $n$-dimensional varieties in a projective space
$P$ passing through two fixed points $p_1\ne p_2\in P$ over a smooth variety $B$. Then $p_1$ and $p_2$ are
not $\Gamma$-equivalent over $\BQ$ on $X_b$ for $b\in B$ general if \eqref{COMVE475} holds at a point $b$
where $h^0(X_t, T_X^n\otimes K_X)$ is locally constant in $t$.
\end{prop}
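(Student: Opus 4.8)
The plan is to obtain this as an essentially immediate consequence of Proposition \ref{COMVPROPUNI}, once the abstract hypothesis \eqref{COMVE482} of that proposition has been translated into the concrete condition \eqref{COMVE475}. First I would record that \eqref{COMVE107} supplies a canonical identification $\Omega_X^N \cong T_X^n \otimes K_X$ (coming from $\wedge^n T_X \otimes K_X \cong \wedge^N \Omega_X$, using $\dim X = N + n$), so that the function $t \mapsto h^0(X_t, T_X^n \otimes K_X)$ appearing in the statement is literally $t \mapsto h^0(X_t, \Omega_X^N)$; this matches the local-constancy hypothesis required by Proposition \ref{COMVPROPUNI}.

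Next I would apply Proposition \ref{COMVPROPKEY} to each of the two constant sections $\sigma_1,\sigma_2$, with $P$ the ambient projective space and the fixed point taken to be $p_i$. Under the identifications \eqref{COMVE107} and \eqref{COMVE420}, its commutative diagram \eqref{COMVE414} becomes the diagram \eqref{COMVE417}, which has left-exact rows and relates $d\sigma_i$ to the restriction maps $\alpha_{n,i}$ and the Koszul differential $\eta_n$. Passing to global sections over $X_b$ and using left-exactness of the rows, one reads off the kernel identity \eqref{COMVE483}, namely $\ker(d\sigma_i \circ \Gamma_b) = \ker(\alpha_{n,i} \circ \Gamma_b) \cap \ker(\eta_n \circ \Gamma_b)$ for $i=1,2$. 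Since the factor $\ker(\eta_n \circ \Gamma_b)$ does not depend on $i$, the inequality \eqref{COMVE482} becomes equivalent to \eqref{COMVE416}.

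It then remains to make \eqref{COMVE416} explicit. Because $Y = B \times P$ is a product, $T_Y = \pi_P^* T_P \oplus \pi_B^* T_B$, and this yields the direct-sum decomposition \eqref{COMVE457} of $\Gamma_b(T_Y^n \otimes K_X)$. With respect to it, $\alpha_{n,i}$ is evaluation at $p_i$ on the pure summand $\Gamma_b(\pi_P^* T_P^n \otimes K_X)$, and it annihilates every summand carrying a positive power of $T_{B,b}$ — this last point is exactly where the hypothesis that $\sigma_i$ is the \emph{constant} section through $p_i$ enters, via the vanishing \eqref{COMVE497} of $\partial f_i/\partial t_j$ at $x=0$. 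Hence $\ker(\alpha_{n,i} \circ \Gamma_b)$ is given by \eqref{COMVE471}, and substituting into \eqref{COMVE416} produces precisely \eqref{COMVE475}. Combining this equivalence with Proposition \ref{COMVPROPUNI} applied to $\sigma_1,\sigma_2$ gives the assertion: if \eqref{COMVE475} holds at a $b$ where $h^0(X_t, T_X^n \otimes K_X)$ is locally constant in $t$, then $p_1$ and $p_2$ are not $\Gamma$-equivalent over $\BQ$ on $X_b$ for general $b$.

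I do not expect a genuine obstacle at this stage: all of the substantive work has already been done — the $\Gamma$-equivalence invariance of $\delta Z$ (Lemma \ref{COMVLEM001}), the homological criterion (Lemma \ref{COMVLEMSIMPLE}), and above all the factorization of $d\sigma$ through the $F^N$-graded piece restricted to the section (Proposition \ref{COMVPROPKEY}). The only points requiring care are the two bookkeeping steps above: transporting the sheaf-level diagram \eqref{COMVE417} to the kernel identity \eqref{COMVE483} at the level of $H^0(X_b,-)$, and checking that $\alpha_{n,i}$ kills the summands involving $T_{B,b}$ — both of which are routine once \eqref{COMVE497} is in hand.
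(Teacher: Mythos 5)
Your proposal is correct and follows essentially the same route the paper takes: the paper does not give a separate proof environment for this proposition but derives it exactly as you do, by combining Proposition \ref{COMVPROPKEY} (giving the commutative diagram \eqref{COMVE417}), the kernel identity \eqref{COMVE483}, the decomposition \eqref{COMVE457} with \eqref{COMVE471}, and Proposition \ref{COMVPROPUNI}. The only slight imprecision is your attribution of \eqref{COMVE497}: the annihilation of the $T_{B,b}$-summands by $\alpha_{n,i}$ is just the definition of the projection to $F^N\Omega_Y^\bullet$ on the product $Y=B\times P$, whereas \eqref{COMVE497} is what makes the diagram \eqref{COMVE414} commute inside the proof of Proposition \ref{COMVPROPKEY} — but since you invoke that proposition anyway, nothing is lost.
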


\begin{rem}\label{COMVREM000}
Since $X\subset B\times P$ is a family of varieties in $P$ passing through $p_i$,
$\eta(\mathbf v)$ is a section in $H^0(N_{X_b})$ vanishing at $p_i$ for all tangent vectors $\mathbf v\in T_{B,b}$ and $i=1,2$. It follows that
\begin{equation}\label{COMVE472}
\begin{aligned}
&\quad \eta_n\left(\sum_{j=0}^{n-1} \Gamma_b(\pi_P^* T_P^j\otimes K_X)\otimes T_{B,b}^{n-j}\right)
\\
&\subset \bigcap_{i=1}^2 \ker(\Gamma_b(\alpha_{n-1,i}))
= \Gamma_b(\pi_P^*T_P^{n-1} \otimes K_X\otimes \N_X(-p_1-p_2))\\
&\quad\quad\quad\quad\quad\quad\quad\quad\quad\quad
\oplus\sum_{j=0}^{n-2} \Gamma_b(\pi_P^* T_P^{j}\otimes K_X\otimes \N_X)\otimes T_{B,b}^{n-1-j}.
\end{aligned}
\end{equation}
\end{rem}

Let us apply Proposition \ref{COMVPROPKEYCRITERION} to complete intersections in $P=\PP^{n+k}$ of type $(d_1,d_2,...,d_k)$.
When $\sum (d_i-1) = 2n + 1$, we have
$K_X = \CO_X(n)$.
More general, let us consider a smooth projective family $X\subset Y=B\times P$ of varieties of dimension $n$ in $P$
with $K_X(-n)$ globally generated on each fiber $X_b$.
In this case, we have the following corollary of Proposition \ref{COMVPROPKEYCRITERION}.

\begin{cor}\label{COMVCORKEYCRITERION}
Let $X\subset Y = B\times P$ be a smooth projective family of $n$-dimensional varieties in a projective space $P$
passing through two fixed points $p_1\ne p_2\in P$ over a smooth variety $B$ and let $W_{X,b}$ be the
subspace of $\Gamma_b(T_P(1))$ defined by
\begin{equation}\label{COMVE461}
W_{X,b} = \Big\{ \omega\in \Gamma_b(T_P(1)):
\eta(\omega) \in \eta\big(\Gamma_b(\CO(1))\otimes T_{B,b}\big)
\Big\},
\end{equation}
where the map $\eta$ on $\Gamma_b(T_P(1))$ and $\Gamma_b(\CO(1))\otimes T_{B,b}$ are given by the diagram
\begin{equation}\label{COMVE476}
\begin{tikzcd}
\Gamma_b(T_Y(1))\ar{d}[left]{\eta}\ar[equal]{r} & \Gamma_b(T_P(1)) \oplus \Gamma_b(\CO(1))\otimes T_{B,b}\ar{dl}\\
\Gamma_b(\N_X(1))
\end{tikzcd}
\end{equation}
Suppose that there exists a point $b\in B$ such that
$h^0(X_t, T_X^n\otimes K_X)$ is constant for $t$ in an open neighborhood of $b$,
each point $p_i$ imposes independent conditions on
both $K_{X_b}(-n)$ and $T_X(1)\otimes \CO_{X_b}$, i.e., the maps
\begin{equation}\label{COMVE478}
\begin{tikzcd}
\Gamma_b(K_X(-n))\ar[two heads]{r} & K_X(-n)\otimes \CO_{p_i}
\end{tikzcd}
\text{ and}
\end{equation}
\begin{equation}\label{COMVE479}
\begin{tikzcd}
\Gamma_b(T_X(1)) \ar[two heads]{r} & T_X(1)\otimes \CO_{p_i}
\end{tikzcd}
\end{equation}
are surjective for $i=1,2$ and
\begin{equation}\label{COMVE473}
\big\{\omega\in W_{X,b} : \omega(p_1) = 0\big\} \ne
\big\{\omega\in W_{X,b} : \omega(p_2) = 0\big\}.
\end{equation}
Then $p_1$ and $p_2$ are not $\Gamma$-equivalent over $\BQ$ on $X_b$ for $b\in B$ general.
\end{cor}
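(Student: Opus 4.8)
\emph{Strategy.} The plan is to invoke Proposition~\ref{COMVPROPKEYCRITERION}: it is enough to check \eqref{COMVE475}, equivalently \eqref{COMVE482} that $\ker(d\sigma_1\circ\Gamma_b)\ne\ker(d\sigma_2\circ\Gamma_b)$, at the given point $b$. By \eqref{COMVE473}, after possibly swapping $p_1,p_2$ there is $\omega_1\in W_{X,b}$ with $\omega_1(p_1)=0$ and $\omega_1(p_2)\ne 0$; I will construct a section $\Theta\in H^0(X_b,T_X^n\otimes K_X)$ lying in $\ker(d\sigma_1\circ\Gamma_b)$ but not in $\ker(d\sigma_2\circ\Gamma_b)$.

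\emph{Construction of $\Theta$.} By \eqref{COMVE478} there is $s\in H^0(X_b,K_X(-n))$ with $s(p_2)\ne 0$. For $\omega\in W_{X,b}$, left-exactness of $0\to T_X(1)\to T_Y(1)\xrightarrow{\eta}\N_X(1)$ on $X_b$ together with the definition \eqref{COMVE461} of $W_{X,b}$ produces a lift $\widetilde\omega=\omega-\tau\in H^0(X_b,T_X(1))$ with $\tau\in\Gamma_b(\CO(1))\otimes T_{B,b}$. Choosing $\omega_2,\dots,\omega_n\in W_{X,b}$ (specified below) with lifts $\widetilde\omega_i=\omega_i-\tau_i$, set
\[
\Theta=\widetilde\omega_1\wedge\cdots\wedge\widetilde\omega_n\otimes s\in H^0\bigl(X_b,\wedge^n(T_X(1))\otimes K_X(-n)\bigr)=H^0(X_b,T_X^n\otimes K_X),
\]
using $\CO_X(n)=\pi_P^*\CO_P(n)$. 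Since $\Theta$ is built from sections of $T_X(1)$, it lies in $H^0(X_b,T_X^n\otimes K_X)=\ker(\eta_n\circ\Gamma_b)$ by the left-exact rows of \eqref{COMVE107} (cf.\ \eqref{COMVE412}). Viewing $\Theta\in\Gamma_b(T_Y^n\otimes K_X)$ via \eqref{COMVE107} and expanding $\bigwedge_i(\omega_i-\tau_i)$ with respect to $T_Y(1)=\pi_P^*T_P(1)\oplus\pi_B^*T_B(1)$, every term except $\omega_1\wedge\cdots\wedge\omega_n$ carries a factor $\tau_i\in\Gamma_b(\pi_B^*T_B(1))$ and so sits in a summand with $j<n$ of the decomposition \eqref{COMVE457}; hence the $j=n$ component of $\Theta$ is exactly $\Theta_0:=(\omega_1\wedge\cdots\wedge\omega_n)\otimes s$. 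By \eqref{COMVE483} together with the description of $\alpha_{n,i}$ in \eqref{COMVE417} (projection onto the $j=n$ summand followed by restriction to $\sigma_i(B)$), for $\Theta\in\ker(\eta_n\circ\Gamma_b)$ one has $\Theta\in\ker(d\sigma_i\circ\Gamma_b)$ if and only if $\Theta_0(p_i)=0$ in $(\pi_P^*T_P^n\otimes K_X)|_{p_i}$. As $\Theta_0(p_i)=\bigl(\omega_1(p_i)\wedge\cdots\wedge\omega_n(p_i)\bigr)\otimes s(p_i)$ and $\omega_1(p_1)=0$, vanishing at $p_1$ is automatic, and since $s(p_2)\ne 0$ it remains only to pick $\omega_2,\dots,\omega_n\in W_{X,b}$ making $\omega_1(p_2),\dots,\omega_n(p_2)$ linearly independent in $T_P(1)|_{p_2}$.

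\emph{The positivity input.} For the last step I claim that $\omega\mapsto\omega(p_2)$ maps $W_{X,b}$ onto the $n$-dimensional subspace $T_{X_b}(1)|_{p_2}\subset T_P(1)|_{p_2}$. Granting this, $\omega_1(p_2)$ is a nonzero vector of $T_{X_b}(1)|_{p_2}$; extending it to a basis and lifting the remaining basis vectors to $\omega_2,\dots,\omega_n\in W_{X,b}$ gives $\omega_1(p_2)\wedge\cdots\wedge\omega_n(p_2)\ne 0$, hence $\Theta_0(p_2)\ne 0$, finishing the proof. To see the claim: by \eqref{COMVE461} and the left-exactness used above, $W_{X,b}$ is exactly the set of $\pi_P^*T_P(1)$-components of sections in $H^0(X_b,T_X(1))$, so by the surjectivity \eqref{COMVE479} its image under evaluation at $p_2$ is the image of $T_X(1)|_{p_2}$ under $T_Y(1)|_{p_2}\to T_P(1)|_{p_2}$; and since $p_2$ is a fixed section, $d\sigma_2(T_{B,b})\subset T_X|_{p_2}$ has zero $P$-component, forcing the projection $T_X|_{p_2}\to T_P|_{p_2}$ to have kernel $T_{B,b}$ and image $T_{X_b}|_{p_2}$, so the image in question is $T_{X_b}(1)|_{p_2}$.

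\emph{Main obstacle.} I expect the delicate part to be the bookkeeping that ties $d\sigma_i(\Theta)$ to the single evaluation $\Theta_0(p_i)$ — i.e.\ verifying that, once $\Theta$ lies in $\ker(\eta_n\circ\Gamma_b)$, the auxiliary terms carrying the $\tau_i$ play no role — which is precisely where \eqref{COMVE417}, \eqref{COMVE483} and Remark~\ref{COMVREM000} are needed, together with the positivity input extracting ``$W_{X,b}$ is large at $p_i$'' from \eqref{COMVE479} and the fixed-section structure. Once those are in place the selection of the $\omega_i$ is elementary linear algebra, and the case where \eqref{COMVE473} instead supplies $\omega_1\in W_{X,b}$ vanishing at $p_2$ but not $p_1$ is handled symmetrically, using \eqref{COMVE479} at $p_1$ and a section $s$ with $s(p_1)\ne 0$.
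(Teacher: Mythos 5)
Your proposal is correct and follows essentially the same route as the paper's own proof: identify $W_{X,b}$ with the $T_P(1)$-components of $\Gamma_b(T_X(1))$, use \eqref{COMVE479} to see its evaluation at $p_2$ fills the $n$-dimensional space $T_{X_b}(1)|_{p_2}$, form $\bigwedge_j(\omega_j+\tau_j)\otimes s\in\ker(\eta_n\circ\Gamma_b)$, and read off \eqref{COMVE475} from the pure $\pi_P^*T_P^n$ component via \eqref{COMVE471}. No gaps.
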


\begin{proof}
By \eqref{COMVE473}, there exists $\omega\in W_{X,b}$ such that $\omega(p_i) = 0$ and $\omega(p_{3-i}) \ne 0$
for $i=1$ or $2$. Without loss of generality, let us assume that
$\omega_1(p_1) = 0$ and $\omega_1(p_2) \ne 0$ for some $\omega_1\in W_{X,b}$.

It is easy to see that $W_{X,b}$ is the image of the projection from $\Gamma_b(T_X(1))$ to
$\Gamma_b(T_P(1))$ via the diagram
\begin{equation}\label{COMVE492}
\begin{tikzcd}
\Gamma_b(T_X(1))\ar[hook]{r} \ar{dr} & \Gamma_b(T_Y(1))\ar{r}{\eta}\ar{d}
& \Gamma_b(\N_X(1))\\
& \Gamma_b(T_P(1))
\end{tikzcd}
\end{equation}
where $\Gamma_b(T_X(1))$ can be identified with $\ker(\eta)$.
In other words, for every $\omega\in W_{X,b}$, there exists $\tau\in \Gamma_b(\CO(1))\otimes T_{B,b}$
such that $\eta(\omega + \tau) = 0$ and hence
$\omega + \tau \in \Gamma_b(T_X(1))$.

By \eqref{COMVE479}, $\Gamma_b(T_X(1))$ generates the vector space $T_X(1)\otimes \CO_{p_2}$. On the other hand,
by the diagram
\begin{equation}\label{COMV465}
\begin{tikzcd}
T_{X_b}(1)\otimes \CO_{p_2} \ar[hook]{r} \ar{d} &
T_X(1)\otimes \CO_{p_2} \ar{r} \ar{d} & T_{B,b}\ar[equal]{d}\\
T_{Y_b}(1) \otimes \CO_{p_2} \ar[equal]{rd}\ar[hook]{r} & T_Y(1)\otimes \CO_{p_2} \ar{r} \ar{d} & T_{B,b}\\
& T_P(1)\otimes \CO_{p_2}
\end{tikzcd}
\end{equation}
we see that the image of the projection $T_X(1)\otimes \CO_{p_2}\to T_P(1)\otimes \CO_{p_2}$ is the same as the
 image of the map $T_{X_b}(1)\otimes \CO_{p_2}\to T_{Y_b}(1)\otimes \CO_{p_2}$ and thus has dimension $n$. Therefore,
\begin{equation}\label{COMVE432}
\dim \{ \omega(p_2): \omega \in W_{X,b}\} = n.
\end{equation}
And since $\omega_1(p_2) \ne 0$, we can find $\omega_2,...,\omega_n\in W_{X,b}$ such that $\{\omega_j(p_2)\}$ are
linearly independent. On the other hand, $\omega_1(p_1) = 0$ and hence $\{\omega_j(p_1)\}$ are linearly dependent. In other words,
\begin{equation}\label{COMVE481}
\left\{\begin{aligned}
\omega_1(p_1)\wedge\omega_2(p_1)\wedge ... \wedge \omega_n(p_1) &= 0\\
\omega_1(p_2)\wedge\omega_2(p_2)\wedge ... \wedge \omega_n(p_2) &\ne 0.
\end{aligned}
\right.
\end{equation}
Let $\eta(\omega_j + \tau_j) = 0$ for some $\tau_j\in \Gamma_b(\CO(1))\otimes T_{B,b}$ and $j=1,2,...,n$.
Then
\begin{equation}\label{COMVE462}
\bigwedge_{j=1}^n (\omega_j + \tau_j) \otimes s\in \ker(\Gamma_b(\eta_n))
\end{equation}
for all $s\in \Gamma_b(K_X(-n))$. By \eqref{COMVE481}, we have
\begin{equation}\label{COMVE470}
\begin{aligned}
\bigwedge_{j=1}^n \omega_j \otimes s & \in \Gamma_b(\pi_P^*T_P^n \otimes K_X(-p_1)) \text{ and}\\
\bigwedge_{j=1}^n \omega_j \otimes s &\not\in \Gamma_b(\pi_P^*T_P^n \otimes K_X(-p_2))
\end{aligned}
\end{equation}
provided that $s(p_2) \ne 0$. The combination of \eqref{COMVE462} and \eqref{COMVE470} yields \eqref{COMVE475}.
\end{proof}

Since the validity of \eqref{COMVE473} is determined by the restriction of $W_{X,b}$ to $Z = \{p_1,p_2\}$, we may let
$W_{X,b,Z}$ be the subspace of $H^0(Z, T_P(1))$ given by
\begin{equation}\label{COMVE074}
\begin{aligned}
W_{X,b,Z} &= W_{X,b}\otimes H^0(\CO_Z)\\
&= \Big\{ \omega\big|_Z: \omega\in \Gamma_b(T_P(1)) \text{ and }
\eta(\omega) \in \eta\big(\Gamma_b(\CO(1))\otimes T_{B,b}\big)
\Big\}
\end{aligned}
\end{equation}
and reformulate \eqref{COMVE473} as
\begin{equation}\label{COMVE073}
W_{X,b,Z} \cap H^0(Z, T_P(1) \otimes I_p) \ne 0
\end{equation}
for some $p\in \supp(Z) = \{p_1,p_2\}$.

\subsection{Criterion for two varying sections}

So far we have obtained the key criterion, Corollary \ref{COMVCORKEYCRITERION}, for the $\Gamma$-equivalence
of two fixed sections of $X/B$ in the ambient space $P$. To apply it to two arbitrary sections of $X/B$, we need to
use an automorphism $\lambda\in \Aut(Y/B)$ to move these two sections to two fixed points in $P$, as pointed
out before. This line of argument leads to the following:

\begin{prop}\label{COMVPROPVSA}
Let $X\subset Y = B\times P$ be a smooth projective family of $n$-dimensional varieties in a projective
space $P$ over the $N$-dimensional polydisk $B = \Spec \BC[[t_j]]$ and let $\sigma_i: B\to X$ be two disjoint sections of $X/B$ with
$p_i = \sigma_i(b)$ at the origin $b\in B$ for $i=1,2$.
Let $\lambda\in B\times \Aut(P)$ be an automorphism of $Y$ preserving the base $B$, satisfying that
$\lambda_b = \text{id}$ and
$\lambda(\sigma_i(t)) \equiv p_i$ for $i=1,2$ and all $t\in B$ and
given by
\begin{equation}\label{COMVE454}
\lambda \begin{bmatrix}
x_0\\
x_1\\
\vdots\\
x_{r}
\end{bmatrix}
= \Lambda \begin{bmatrix}
x_0\\
x_1\\
\vdots\\
x_{r}
\end{bmatrix},
\end{equation}
where $(x_0,x_1,...,x_r)$ are the homogeneous coordinates of $P$ and
$\Lambda = \Lambda(t)$ is an $(r+1)\times (r+1)$ matrix
over $\BC[[t_j]]$ satisfying $\Lambda(0) = I$.
Let $W_{X,b,Z,\lambda}$ be the subspace of $H^0(Z, T_P(1))$ defined by
\begin{equation}\label{COMVE491}
\begin{aligned}
W_{X,b,Z,\lambda}
&= \Big\{
\omega\big|_Z + L_\lambda(\tau): \omega \in \Gamma_b(T_P(1)),
\tau\in \Gamma_b(\CO(1)) \otimes T_{B,b},\\
&\hspace{90pt} \eta(\omega + \tau) = 0
\Big\}
\end{aligned}
\end{equation}
for $Z = \{p_1,p_2\}$, where $L_\lambda: \pi_B^* T_{B,b}\to T_P\otimes \CO_Z$
is the map given by
\begin{equation}\label{COMVE090}
L_\lambda\left(\frac{\partial}{\partial t_j}\right)
=
\begin{bmatrix}
x_0 & x_1 & ... & x_{r}
\end{bmatrix}
\left.\frac{\partial\Lambda^T}{\partial t_j}\right|_{t=0} \begin{bmatrix}
\partial/\partial x_0\\
\partial/\partial x_1\\
\vdots\\
\partial/\partial x_{r}
\end{bmatrix}.
\end{equation}

Suppose that
\begin{itemize}
\item $h^0(X_t, T_X^n\otimes K_X)$ is constant over $B$,
\item each point $p_i$ imposes independent conditions on
$$
K_{X_b}(-n) \text{ and } T_X(1)\otimes \CO_{X_b}
$$
for $i=1,2$,
\item and
\begin{equation}\label{COMVE600}
W_{X,b,Z,\lambda} \cap H^0(Z, T_P(1)\otimes I_p) \ne 0
\end{equation}
for some $p\in \supp(Z)$.
\end{itemize}
Then $\sigma_1(t)$ and $\sigma_2(t)$ are not $\Gamma$-equivalent over $\BQ$ on $X_t$ for $t\in B$ general.
\end{prop}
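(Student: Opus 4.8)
The plan is to reduce everything to Corollary \ref{COMVCORKEYCRITERION} by using the automorphism $\lambda$ to move the two varying sections onto the fixed points $p_1,p_2$. Set $\widehat X=\lambda(X)\subset Y$; it is again a smooth subvariety flat over $B$, and the sections $\widehat\sigma_i=\lambda\circ\sigma_i$ satisfy $\widehat\sigma_i(t)\equiv p_i$. Since $\lambda$ restricts to an isomorphism $X_t\xrightarrow{\sim}\widehat X_t$ on each fibre and $\Gamma$-equivalence over $\BQ$ is an isomorphism invariant, $\sigma_1(t)\sim_\Gamma\sigma_2(t)$ on $X_t$ if and only if $p_1\sim_\Gamma p_2$ on $\widehat X_t$; so it is enough to apply Corollary \ref{COMVCORKEYCRITERION} to $\widehat X/B$ with the fixed points $p_1,p_2$. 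Because $\lambda_b=\mathrm{id}$ we have $\widehat X_b=X_b$ inside $P$, and $\lambda|_X$ is an isomorphism of families over $B$ inducing the identity on the central fibre; hence $h^0(\widehat X_t,T_{\widehat X}^n\otimes K_{\widehat X})=h^0(X_t,T_X^n\otimes K_X)$ is constant near $b$, $K_{\widehat X_b}(-n)=K_{X_b}(-n)$, and $T_{\widehat X}(1)\otimes\CO_{\widehat X_b}\cong T_X(1)\otimes\CO_{X_b}$. Thus the first two hypotheses of Corollary \ref{COMVCORKEYCRITERION} hold for $\widehat X$, and it remains only to verify \eqref{COMVE073} for $\widehat X$, i.e.\ that $W_{\widehat X,b,Z}\cap H^0(Z,T_P(1)\otimes I_p)\ne 0$ for some $p\in\supp(Z)$.

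The heart of the matter is therefore the identity $W_{\widehat X,b,Z}=W_{X,b,Z,\lambda}$, which turns the hypothesis \eqref{COMVE600} into exactly \eqref{COMVE073} for $\widehat X$. To prove it I would compute the differential of $\lambda$. Writing $\lambda(x,t)=(\Lambda(t)x,t)$ with $\Lambda(0)=I$ and using the splitting $T_Y\otimes\CO_{X_b}=\pi_P^*T_P\otimes\CO_{X_b}\oplus\CO_{X_b}\otimes T_{B,b}$, one checks that $d\lambda$ restricted to the central fibre is the identity on the first summand and in general sends $\omega+\tau\mapsto(\omega+L_\lambda(\tau))+\tau$, where $L_\lambda$ is precisely the map \eqref{COMVE090} -- the transpose there is exactly what is needed, since $x^{T}\,(\partial_{t_j}\Lambda^{T})\,\partial_x$ is the vector field on $P$ generated by the matrix $\partial_{t_j}\Lambda|_0$. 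The ambient automorphism $\lambda\colon Y\to Y$ taking $X$ to $\widehat X$ yields a commutative square between $\eta\colon T_Y|_X\to\N_X$ and $\widehat\eta\colon T_Y|_{\widehat X}\to\N_{\widehat X}$; restricting this square to $X_b=\widehat X_b$ and using the canonical identifications $\N_X|_{X_b}\cong\N_{X_b/P}\cong\N_{\widehat X}|_{\widehat X_b}$ (valid since $X/B$ is smooth, and under which the isomorphism induced by $d\lambda$ is the identity because $\lambda_b=\mathrm{id}$) one obtains the sheaf identity $\widehat\eta_b(\widehat\omega+\tau)=\eta_b\big((\widehat\omega-L_\lambda(\tau))+\tau\big)$ on $X_b$, for all $\widehat\omega\in\Gamma_b(T_P(1))$ and $\tau\in\Gamma_b(\CO(1))\otimes T_{B,b}$. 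Substituting $\omega=\widehat\omega-L_\lambda(\tau)$ then rewrites the defining condition of $W_{\widehat X,b}$, and hence after restriction to $Z$ of $W_{\widehat X,b,Z}$, verbatim as the defining condition \eqref{COMVE491} of $W_{X,b,Z,\lambda}$. Conceptually this just says that replacing $X$ by $\widehat X=\lambda(X)$ changes the embedded Kodaira--Spencer map of the family only by the trivial, automorphism-induced deformations recorded by $L_\lambda$.

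Granting $W_{\widehat X,b,Z}=W_{X,b,Z,\lambda}$, hypothesis \eqref{COMVE600} is condition \eqref{COMVE073} for $\widehat X$, so Corollary \ref{COMVCORKEYCRITERION} gives $p_1\not\sim_\Gamma p_2$ over $\BQ$ on $\widehat X_t$ for $t\in B$ general, and transporting back along $\lambda_t^{-1}$ gives $\sigma_1(t)\not\sim_\Gamma\sigma_2(t)$ over $\BQ$ on $X_t$ for $t\in B$ general. The main obstacle is the middle paragraph: the $d\lambda$ computation, combined with the bookkeeping for the twist by $\CO(1)$, the restriction to $Z$ versus to $X_b$, and the several incarnations of the normal bundle, must be done carefully enough that the explicit correction term matches $L_\lambda$ exactly. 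One should also note that $B$ here is a formal (or small analytic) polydisk rather than an algebraic variety, so ``general $t\in B$'' means ``outside a proper analytic subset'', and the form of Corollary \ref{COMVCORKEYCRITERION} used is its evident analogue over such a base.
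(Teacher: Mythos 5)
Your proposal is correct and follows essentially the same route as the paper: replace $X$ by $\widehat X=\lambda(X)$, compute $(d\lambda)_*(\omega+\tau)=(\omega+\widehat L_\lambda(\tau))+\tau$ to identify $W_{\widehat X,b,Z}$ with $W_{X,b,Z,\lambda}$ (your identity $\widehat\eta(\widehat\omega+\tau)=\eta((\widehat\omega-L_\lambda(\tau))+\tau)$ is exactly the paper's Remark \ref{COMVREMPROPVSA}), and then invoke Corollary \ref{COMVCORKEYCRITERION} for the fixed points $p_1,p_2$. The bookkeeping you flag as the main obstacle is carried out in the paper via the diagram \eqref{COMVE458} and the explicit formula \eqref{COMVE490}, and matches your computation.
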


\begin{proof}
Note that $W_{X,b,Z,\lambda} = W_{X,b,Z}$ if $L_\lambda = 0$, i.e., $\sigma_i(t) \equiv p_i$.

Let $\widehat{X} = \lambda(X) \subset Y= B \times P$. Obviously,
$\widehat{X}$ is a smooth projective family of $n$-dimensional varieties in $P$ over $B$ passing through the two fixed points $p_1\ne p_2$.

We define the map $\widehat{\eta}: T_Y \otimes \CO_{\widehat{X}}\to \N_{\widehat{X}}$ and the space
$W_{\widehat{X},b}\subset \Gamma_b(T_P(1))$
for $\widehat{X}\subset Y = B\times P$ in the same way as $\eta$ and $W_{X,b}$. Note that since $\lambda_b = \text{id}$,
$X_b = \widehat{X}_b$ and we may use $\Gamma_b(\bullet)$ to refer both $H^0(X_b, \bullet)$ and $H^0(\widehat{X}_b, \bullet)$.

Let us consider the commutative diagram:
\begin{equation}\label{COMVE458}
\begin{tikzcd}
\Gamma_b(T_X(1)) \ar{d}{d \lambda}[left]{\cong} \ar[hook]{r} & \Gamma_b(T_Y(1)) \ar{d}{d \lambda}[left]{\cong}
\ar{r}{\eta} & \Gamma_b(\N_X(1))\ar{d}\\
\Gamma_b(T_{\widehat{X}}(1)) \ar{dr}\ar[hook]{r} & \Gamma_b(T_Y(1)) \ar{d}{\pi_{P,*}} \ar{r}{\widehat{\eta}}
& \Gamma_b(\N_{\widehat{X}}(1))\\
& \Gamma_b(T_P(1))
\end{tikzcd}
\end{equation}
As pointed out in the proof of Corollary \ref{COMVCORKEYCRITERION}, $W_{X,b}$ is simply the image of the
projection from $\Gamma_b(T_X(1))$ to $\Gamma_b(T_P(1))$ when $\Gamma_b(T_X(1))$ is identified with the kernel of
$\eta: \Gamma_b(T_Y(1)) \to \Gamma_b(\N_X(1))$.
The same holds for $\widehat{X}$. That is,
$W_{\widehat{X},b}$ is simply the image of the projection from $\Gamma_b(T_{\widehat{X}}(1))$ to $\Gamma_b(T_P(1))$
when $\Gamma_b(T_{\widehat{X}}(1))$ is identified with the kernel of
$\widehat{\eta}: \Gamma_b(T_Y(1)) \to \Gamma_b(\N_{\widehat{X}}(1))$.

We may regard $W_{\widehat{X},b}$ as the image of $\Gamma_b(T_X(1))$
under the map $\pi_{P,*} \circ d \lambda$ in the above diagram. Note that $\pi_{P,*} \circ d \lambda$ is not the same as the projection
$\pi_{P,*}: \Gamma_b(T_Y(1))\to \Gamma_b(T_P(1))$, i.e.,
\begin{equation}\label{COMVE463}
\pi_{P,*} \circ d \lambda \ne \pi_{P,*}.
\end{equation}
Indeed, we have
\begin{equation}\label{COMVE456}
(d\lambda) (\omega + \tau) = (\omega + \widehat{L}_\lambda(\tau)) + \tau
\end{equation}
for $\omega \in \Gamma_b(T_P(1))$ and $\tau\in \Gamma_b(\CO(1))\otimes T_{B,b}$, where
\begin{equation}\label{COMV487}
\widehat{L}_\lambda:
\begin{tikzcd}
\pi_B^* T_B \ar{r} & \pi_P^* T_P
\end{tikzcd}
\end{equation}
is a homomorphism induced by $d\lambda: T_Y\to T_Y$. Thus,
\begin{equation}\label{COMVE465}
\pi_{P,*} \circ (d \lambda)(\omega + \tau)
= \omega + \widehat{L}_\lambda(\tau) \ne \omega = \pi_{P,*} (\omega + \tau).
\end{equation}
It follows that
\begin{equation}\label{COMVE460}
\begin{aligned}
W_{\widehat{X},b} &= \pi_{P,*} \circ d \lambda
(\Gamma_b(T_X(1)))\\
&= \big\{
\omega + \widehat{L}_\lambda(\tau): \omega \in \Gamma_b(T_P(1)),
\tau\in \Gamma_b(\CO(1))\otimes T_{B,b},\\
&\hspace{78pt} \eta(\omega + \tau) = 0
\big\}.
\end{aligned}
\end{equation}
We claim that $L_\lambda$ and $W_{X,b,Z,\lambda}$ are exactly
the restrictions of $\widehat{L}_\lambda$ and $W_{\widehat{X},b}$ to $Z$, respectively.
Indeed, the differential map $d\lambda: T_Y\to T_Y$ is given by
\begin{equation}\label{COMVE490}
\begin{aligned}
(d\lambda)\left(\frac{\partial}{\partial x_i}\right)
&= \frac{\partial}{\partial x_i}\\
(d\lambda)\left(\frac{\partial}{\partial t_j}\right)
&= \frac{\partial}{\partial t_j} + \widehat{L}_\lambda\left(\frac{\partial}{\partial t_j}
\right)
\\
&= \frac{\partial}{\partial t_j} +
\begin{bmatrix}
x_0 & x_1 & ... & x_{r}
\end{bmatrix}
\frac{\partial\Lambda^T}{\partial t_j} \begin{bmatrix}
\partial/\partial x_0\\
\partial/\partial x_1\\
\vdots\\
\partial/\partial x_{r}
\end{bmatrix}
\end{aligned}
\end{equation}
at $b$. Therefore, $L_\lambda$ is the restriction of $\widehat{L}_\lambda$ to $Z$ and hence $W_{\widehat{X},b,Z} = W_{X,b,Z,\lambda}$.

In conclusion, the hypothesis \eqref{COMVE600} on $W_{X,b,Z,\lambda}$
translates to
\begin{equation}\label{COMVE132}
\Big\{\omega\in W_{\widehat{X},b} : \omega(p_1) = 0 \Big\} \ne
\Big\{\omega\in W_{\widehat{X},b} : \omega(p_2) = 0 \Big\}.
\end{equation}
Then by Corollary \ref{COMVCORKEYCRITERION}, $\sigma_1(t)$ and $\sigma_2(t)$ are not $\Gamma$-equivalent
over $\BQ$ on a general fiber $X_t$ of $X/B$.
\end{proof}

\begin{rem}\label{COMVREMPROPVSA}
In the above proof, it is easy to see that
\begin{equation}\label{COMV486}
\widehat{\eta}\left(\frac{\partial}{\partial x_i}\right) = \eta\left(\frac{\partial}{\partial x_i}\right)
\text{ and }
\widehat{\eta}\left(\frac{\partial}{\partial t_j}\right) = \eta\left(\frac{\partial}{\partial t_j}
- \widehat{L}_\lambda\left(\frac{\partial}{\partial t_j}\right)
\right).
\end{equation}
Since $\widehat{X}_t$ passes through $p_1$ and $p_2$,
$\widehat{\eta}(\tau)$ vanishes at $p_i$ and hence
$L_\lambda$ satisfies
\begin{equation}\label{COMVE467}
\eta(L_\lambda(\tau)) = \eta(\tau)\Big|_Z
\text{ for all } \tau\in T_{B,b}.
\end{equation}
\end{rem}

There is a more intrinsic way to define $L_\lambda$:
for every $t\in B$, we consider the line joining the two points $\sigma_i(t)$; we may regard $\sigma_i(t)$ as the image of two
fixed points on $\PP^1$ mapped to this line and thus interpret $L_\lambda$ in terms of the deformation of this map $\PP^1\to P$.
We can put the above proposition in the following equivalent form.

\begin{prop}\label{COMVPROPVSB}
Let $X\subset Y = B\times P$ be a smooth projective family of $n$-dimensional varieties in a projective
space $P$ over a smooth variety $B$ and let $v: S = B\times \PP^1\hookrightarrow Y$ be a closed immersion
preserving the base $B$ such that
$v^* \CO_Y(1) = \CO_S(1)$ and there are two fixed points $p_1\ne p_2$ on $\PP^1$ with $v_b(p_i) \in X_b$ for all $b\in B$.
Let $W_{X,b,Z,\lambda}$ be the subspace of $H^0(Z, v_b^* T_P(1))$ defined by
\begin{equation}\label{COMVE601}
\begin{aligned}
W_{X,b,Z,\lambda}
&= \Big\{
v_b^* \omega\big|_Z - L_\lambda(v_b^* \tau): \omega \in \Gamma_b(T_P(1)),
\\
&\hspace{108pt} \tau\in \Gamma_b(\CO(1)) \otimes T_{B,b},\\
&\hspace{108pt} \eta(\omega + \tau) = 0
\Big\}
\end{aligned}
\end{equation}
for $Z = \{p_1,p_2\}$, where $L_\lambda: \pi_{S,B}^* T_{B,b}\to v_b^* T_P\otimes \CO_Z$ is the map induced by $T_S\to v^* T_Y$
with $\pi_{S,B}$ the projection $S\to B$.

Let $b$ be a general point of $B$.
Suppose that each point $v_b(p_i)$ imposes independent conditions
on $K_{X_b}(-n)$ and $T_X(1)\otimes \CO_{X_b}$ for $i=1,2$ and
\begin{equation}\label{COMVE091}
W_{X,b,Z,\lambda} \cap H^0(Z, v_b^* T_P(1)\otimes I_p) \ne 0
\end{equation}
for some $p\in Z$. Then $v_b(p_1)$ and $v_b(p_2)$ are not $\Gamma$-equivalent over $\BQ$ on $X_b$.
\end{prop}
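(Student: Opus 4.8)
The plan is to deduce Proposition~\ref{COMVPROPVSB} from Proposition~\ref{COMVPROPVSA} by dictionary: the closed immersion $v\colon S=B\times\PP^1\hookrightarrow Y$ encodes, fibrewise, the line joining the two marked points, and an automorphism $\lambda$ straightening that line recovers precisely the setup of Proposition~\ref{COMVPROPVSA}. Since, by Proposition~\ref{COMVPROPUNI} (and the equivalences leading up to Proposition~\ref{COMVPROPVSA}), the non-$\Gamma$-equivalence of $v_t(p_1)$ and $v_t(p_2)$ for general $t$ is governed by a condition at a single general point $b\in B$ that depends only on an infinitesimal neighbourhood of $b$, I would begin by replacing $B$ with the formal neighbourhood $\Spec\BC[[t_j]]$ of a general point $b$, chosen so that in addition $h^0(X_t,T_X^n\otimes K_X)$ is constant (which holds on a dense open locus by semicontinuity). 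Then everything takes place in the framework of Proposition~\ref{COMVPROPVSA}, and it remains to build $\lambda$ and to match the data.

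Building $\lambda$ is routine. The hypothesis $v^*\CO_Y(1)=\CO_S(1)$ forces each $v_b\colon\PP^1\to P$ to be a linear embedding onto a line; equivalently, over $\Spec\BC[[t_j]]$ the immersion $v$ is given by an injection $\BC^2\hookrightarrow\BC^{r+1}$ modulo scalars, varying algebraically in $t$ and equal to a fixed $\iota_b$ at $t=b$. Since $\Aut(P)=\mathrm{PGL}_{r+1}$ acts by post-composition transitively, with smooth surjective orbit map, on the open locus of such injections, lifting this arc along the orbit map produces $\lambda\in B\times\Aut(P)$, given by a matrix $\Lambda(t)$ with $\Lambda(0)=I$ as in \eqref{COMVE454}, such that $\lambda_b=\mathrm{id}$ and $\lambda_t\circ v_t=v_b$ for all $t$. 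Setting $q_i:=v_b(p_i)\in X_b$ and $\sigma_i(t):=v_t(p_i)$, the automorphism $\lambda$ carries the two disjoint sections $\sigma_i$ of $X/B$ to the fixed points $q_1\ne q_2$, so all the conditions imposed on $\lambda$ in Proposition~\ref{COMVPROPVSA} hold.

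The one step with real content — and where I expect the work to be — is identifying the intrinsically defined space $W_{X,b,Z,\lambda}$ of \eqref{COMVE601} with the matrix-coordinate space of the same name in Proposition~\ref{COMVPROPVSA}, under $v_b\colon Z=\{p_1,p_2\}\xrightarrow{\sim}\{q_1,q_2\}$. With $\widehat X=\lambda(X)$ and $\widehat v=\lambda\circ v$, the family $\widehat v$ is constant along $B$ because $\widehat v_t=v_b$; differentiating $v=\lambda^{-1}\circ\widehat v$ at $b$ and using $\lambda_b=\mathrm{id}$ then shows that the homomorphism $T_S\to v^*T_Y\to v^*\pi_P^*T_P$ of \eqref{COMVE601}, restricted to $Z$, coincides up to sign with the map $L_\lambda$ of \eqref{COMVE090}; the sign does not affect the defining conditions of $W$. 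Unwinding \eqref{COMVE601}, using the identity $\eta(L_\lambda(\tau))=\eta(\tau)|_Z$ of \eqref{COMVE467}, and comparing with the description $W_{\widehat X,b}=\{\omega+\widehat L_\lambda(\tau):\eta(\omega+\tau)=0\}$ extracted from the proof of Proposition~\ref{COMVPROPVSA} via the diagram \eqref{COMVE458}, one obtains the desired equality of the two $W$'s; the independence-of-conditions hypotheses and \eqref{COMVE091} then translate verbatim into those of Proposition~\ref{COMVPROPVSA} and \eqref{COMVE600}. Applying Proposition~\ref{COMVPROPVSA} to $(X,\sigma_1,\sigma_2,\lambda)$ gives that $v_t(p_1)$ and $v_t(p_2)$ are not $\Gamma$-equivalent over $\BQ$ on $X_t$ for general $t$, and this transfers back to the original $B$ since the whole argument only used an infinitesimal neighbourhood of a general point. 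Thus the proof is essentially a translation, with the $W$-identification as its only delicate point.
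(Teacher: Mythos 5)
The paper itself offers no proof of Proposition \ref{COMVPROPVSB} --- it is simply declared to be Proposition \ref{COMVPROPVSA} ``in equivalent form'' --- so your strategy of making the equivalence explicit (localize at a general $b$, straighten the family of lines $v_t$ by an automorphism $\lambda$ with $\lambda_b=\mathrm{id}$ and $\lambda_t\circ v_t=v_b$, then match the two spaces $W$) is exactly the intended route, and your construction of $\lambda$ via a local section of the $\mathrm{PGL}_{r+1}$-orbit map is fine. The problem is at the one point you yourself flag as delicate. Differentiating $\Lambda(t)\,v_t(p_i)=v_b(p_i)$ at $t=b$ gives $\partial_t v_t(p_i)=-\dot\Lambda\, v_b(p_i)$ modulo the Euler direction, so the map induced by $T_S\to v^*T_Y$ restricted to $Z$ is the \emph{negative} of the $L_\lambda$ of \eqref{COMVE090}; you acknowledge this but assert that ``the sign does not affect the defining conditions of $W$.'' That assertion is not justified and is false in general: because the defining constraint $\eta(\omega+\tau)=0$ couples $\omega$ and $\tau$, negating $L_\lambda$ on the $\tau$-slot is \emph{not} absorbed by a reparametrization of the constraint set, and it genuinely changes the image subspace. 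Concretely, by \eqref{COMVE467} one has $\eta(L_\lambda(\tau))=\eta(\tau)|_Z$ for the $L_\lambda$ of \eqref{COMVE090}, so every element $\omega|_Z+L_\lambda(\tau)$ of the space used in Proposition \ref{COMVPROPVSA} satisfies $\eta(\,\cdot\,)|_Z=\eta(\omega)|_Z+\eta(\tau)|_Z=0$, i.e.\ is tangent to $X_b$ at the two points of $Z$ (consistent with \eqref{COMVE432}); whereas with the $v$-derivative one gets $\eta(\omega|_Z-L_\lambda(\tau))=-2\eta(\tau)|_Z$, which need not vanish. So the two candidate spaces $W_{X,b,Z,\lambda}$ differ whenever some admissible $\tau$ has $\eta(\tau)|_Z\ne 0$, and hypothesis \eqref{COMVE091} for one does not imply it for the other.

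To close the gap you must either (i) prove the identification $W_{X,b,Z,\lambda}=W_{\widehat X,b}\otimes H^0(\CO_Z)$ with the sign that actually comes out of the computation --- which means reading the $L_\lambda$ of \eqref{COMVE601} as \emph{minus} the $T_P$-component of $dv$ on $\pi_{S,B}^*T_{B,b}$, equivalently as $+\dot\Lambda|_Z$ --- and prove the proposition with that convention, or (ii) supply a genuine argument that the nonvanishing condition \eqref{COMVE091} is insensitive to the sign, which the tangency criterion above suggests is not the case. To be fair, this ambiguity is already latent in the paper's unproved assertion of equivalence (and Proposition \ref{COMVPROPVSB} is never used later), but as written your translation step fails at precisely the point where all the content lies.
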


\begin{proof}
Note that the hypothesis $v^* \CO_Y(1) = \CO_S(1)$ simply means that $v$ maps $S/B$ fiberwise to lines in $P$.

Basically, we want to show that the two spaces $W_{X,b,Z,\lambda}$ defined by \eqref{COMVE491} and \eqref{COMVE601} are identical. In turn, this comes down to showing that the maps $L_\lambda$ are ``essentially'' the same up to a sign.

To be more precise, we fix a general point $b$, replace $B$ by an analytic neighborhood of $b$ and choose $\lambda\in \Aut(Y/B)$ to be an automorphism such that $\lambda(v_t(p_i))
\equiv v_b(p_i)$ for $i=1,2$ and all $t\in B$. Then we have a map
$L_\lambda$ defined by \ref{COMVE090}.
Let us rename this map to $\overline{L}_\lambda: \pi_B^* T_{B,b}\to T_P\otimes \CO_{v_b(Z)}$.

Let $\Delta_i = \pi_{S,\PP^1}^{-1}(p_i)$ be the two sections of $S/B$ given by $p_i$ for $i=1,2$, where $\pi_{S,\PP^1}$ is the projection $S\to \PP^1$.
Since $\pi_P \circ \lambda \circ v$ is constant on $\Delta_i$,
we see that
$$L_{\lambda\circ v} \equiv 0$$
from the commutative diagram
$$
\begin{tikzcd}
\pi_{S,B}^* T_B \otimes \CO_{\Delta_i} \ar[equal]{r} \ar{dr} & T_{\Delta_i}\ar[equal]{r} \ar{d} & (\lambda \circ v)^* T_{v(\Delta_i)}\ar{d}\\
& T_S\otimes \CO_{\Delta_i} \ar{r} & (\lambda \circ v)^* T_Y \otimes \CO_{\Delta_i}
\end{tikzcd}
$$
where the map
$L_{\lambda\circ v}: \pi_{S,B}^* T_{B,b}\to (\lambda \circ v_b)^* T_P\otimes \CO_Z$ is defined in an analogous way to $L_\lambda$.

Since
$$
d (\lambda \circ v) = (d \lambda) \circ (d v)
$$
we derive that
$$
L_{\lambda\circ v}(v_b^* \tau) = L_\lambda(v_b^* \tau) +  v_b^*\left(\overline{L}_\lambda(\tau)\right)
$$
using \eqref{COMVE456}. Then
$$
v_b^*\left(\overline{L}_\lambda(\tau)\right) = -L_\lambda(v_b^* \tau)
$$
and the proposition follows.
\end{proof}

Using Proposition \ref{COMVPROPVSA} or \ref{COMVPROPVSB}, we obtain the following criterion for the $\Gamma$-inequivalence of all pairs of distinct points on $X_b$.

\begin{cor}\label{COMVCORKEYCRITERIONREAL}
Let $X\subset Y = B\times P$ be a smooth projective family of $n$-dimensional varieties in a projective space $P$ over a smooth variety $B$ and
let $W_{X,b,Z,\lambda}$ be the subspace of $H^0(Z, T_P(1))$ defined by
\eqref{COMVE491} for a $0$-dimensional subscheme $Z\subset X_b$ and
$L_\lambda\in \Hom(\pi_B^* T_{B,b}, T_P\otimes \CO_Z)$.

Let $b$ be a very general point of $B$.
Suppose that
\begin{itemize}
\item
$K_{X_b}(-n)$ and $T_X(1)\otimes \CO_{X_b}$ are globally generated on $X_b$ and
\item
\eqref{COMVE600} holds for \underline{all} pairs $Z= \{p_1,p_2\}$ of distinct
points $p_1\ne p_2$ on $X_b$, \underline{some} $p\in \supp(Z)$ and \underline{all} $L_\lambda\in \Hom(\pi_B^* T_{B,b}, T_P
\otimes \CO_Z)$ satisfying \eqref{COMVE467}.
\end{itemize}
Then no two distinct points on $X_b$ are $\Gamma$-equivalent over $\BQ$.
\end{cor}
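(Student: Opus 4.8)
The plan is to derive the corollary from \propref{COMVPROPVSA} by the same ``spread out, then count components'' device used in the proof of \propref{COMVPROPVA}. First I would introduce the locus
\[
S = \big\{(b,p,q)\in X\times_B X : p\ne q\in X_b,\ p\sim_\Gamma q \text{ over }\BQ\big\},
\]
which, being cut out inside $X\times_B X$ by countably many Hilbert-scheme conditions exactly as in \eqref{COMVE006}, is a locally noetherian scheme, hence a countable union of irreducible subschemes $\Sigma$. It then suffices to prove that no $\Sigma$ dominates $B$ under the projection $\xi\colon S\to B$: granting this, for every $b$ outside the countable union $\bigcup_\Sigma\overline{\xi(\Sigma)}$ of proper closed subsets of $B$ --- i.e.\ for $b\in B$ very general --- the fiber $X_b$ carries no pair of distinct $\Gamma$-equivalent points over $\BQ$.

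So I would assume, for contradiction, that some component $\Sigma$ dominates $B$. Picking an irreducible subvariety $\Sigma'\subseteq\Sigma$ that is generically finite and dominant over $B$, base-changing to $B'=\Sigma'$, and localizing $B'$ analytically at a general (hence smooth) point $b$ to an $N$-dimensional polydisk, I obtain from the two tautological points of $\Sigma'$ two disjoint sections $\sigma_1,\sigma_2$ of the pulled-back family $X'\subset B'\times P$ (with unchanged fibers), an integer $m\in\BZ^+$, and a cycle $Y$ on $X'\times\Gamma$ flat over $B'\times\Gamma$ with $m(\sigma_1(t)-\sigma_2(t))\sim_\Gamma 0$ for \emph{all} $t\in B'$ --- this is verbatim the Hilbert-scheme argument already used in \lemref{COMVLEM001} and \propref{COMVPROPVA}. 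Shrinking $B'$ further, I may also assume $h^0(X_t,T_X^n\otimes K_X)$ is constant and that $K_{X_b}(-n)$ and $T_X(1)\otimes\CO_{X_b}$ remain globally generated (global generation is open on the base), so in particular each point $\sigma_i(b)$ imposes independent conditions on both sheaves.

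Next I would straighten $\sigma_1,\sigma_2$ to constant sections. Since $\mathrm{PGL}_{r+1}$ acts transitively on ordered pairs of distinct points of $P=\PP^r$, the orbit map $\mathrm{PGL}_{r+1}\to(P\times P)\setminus\Delta$ is smooth and surjective, hence admits an analytic local section near $(p_1,p_2)=(\sigma_1(b),\sigma_2(b))$; composing it with $t\mapsto(\sigma_1(t),\sigma_2(t))$ produces $\lambda\in B'\times\Aut(P)$, given by a matrix $\Lambda(t)$ with $\Lambda(b)=I$, such that $\lambda(\sigma_i(t))\equiv p_i$. By \remref{COMVREMPROPVSA} the associated map $L_\lambda$ of \eqref{COMVE090} satisfies \eqref{COMVE467}, so the standing hypothesis of the corollary --- that \eqref{COMVE600} holds for $b$ general, for $Z=\{p_1,p_2\}$, for some $p\in\supp(Z)$, and for \emph{all} $L_\lambda$ satisfying \eqref{COMVE467} --- applies to this particular $L_\lambda$. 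Then \propref{COMVPROPVSA} yields that $\sigma_1(t)$ and $\sigma_2(t)$ are not $\Gamma$-equivalent over $\BQ$ on $X_t$ for $t\in B'$ general, contradicting the cycle $Y$. This completes the reduction and hence the proof.

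The real content lies entirely in the spreading-out/counting step, which upgrades the local, fiberwise criterion of \propref{COMVPROPVSA} to a statement valid for very general $b$; but both halves of it --- the countability of the components of $S$ and the spreading-out of a $\Gamma$-equivalence over a whole base --- already appear in the proofs of \lemref{COMVLEM001} and \propref{COMVPROPVA}, so I expect no essentially new obstacle: this corollary is a repackaging. The one point that needs a line of care is the construction of $\lambda$ with $\Lambda(b)=I$, which rests on the smoothness of the orbit map and therefore on the two sections staying disjoint --- guaranteed here because $p\ne q$ at the generic point of $\Sigma$.
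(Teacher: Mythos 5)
Your proof is correct and is exactly the argument the paper intends: the corollary is stated as an immediate consequence of Proposition \ref{COMVPROPVSA}, and you supply the standard spreading-out/component-counting reduction (already used in Lemma \ref{COMVLEM001} and Proposition \ref{COMVPROPVA}) together with the straightening automorphism $\lambda$ and the observation from Remark \ref{COMVREMPROPVSA} that the resulting $L_\lambda$ satisfies \eqref{COMVE467}. No issues.
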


We believe that the above corollary will find applications in the future.
However, we will not use it to prove our main theorem \ref{COMVTHMHYPERSURFACE}; instead, we will apply Proposition \ref{COMVPROPVSA} directly to families $X\subset B\times \PP^{n+1}$ of hypersurfaces of degree $2n+2$ in $\PP^{n+1}$.

\section{Hypersurfaces of degree $2n+2$ in $\PP^{n+1}$}

In this section, we are going to prove our main theorem \ref{COMVTHMHYPERSURFACE} using the
criteria developed in the previous section. Here is an outline of the proof.

To start, let us choose a versal family $X \subset Y = B\times \PP^{n+1}$ of hypersurfaces of degree $2n+2$ in $\PP^{n+1}$.
Suppose that the theorem fails. Then using a Hilbert scheme argument, we can find two disjoint sections $\sigma_i: U\to X$
in an analytic (or \'etale) open neighborhood $U$ of a fixed general point $b\in B$ such that $\sigma_1(t)$ and $\sigma_2(t)$ are
$\Gamma$-equivalent over $\BQ$ for all $t$ in $U$. \magenta{We shall apply Proposition \ref{COMVPROPVSA} to deduce a contradiction. Shrinking $B$ is necessary, the first item above \eqref{COMVE600} is clearly satisfied. Since $K_{X_b}(-n) = \CO_{X_b}$ and $T_X(1)$ is globally generated on $X_b$ for $X/B$ being versal \cite{V2}, it remains to verify the crucial \eqref{COMVE600}. Let $\lambda\in U\times \Aut(P)$ be an automorphism of $Y/U$ such that $\lambda_b = \text{id}$ and $\lambda(\sigma_i(t)) \equiv p_i = \sigma_i(b)$ for $i=1,2$. We shall prove that either \eqref{COMVE600} holds or the line joining $p_1$ and $p_2$ meets $X_b$ only at these two points. However, using a standard argument, we shall further show that the latter case does not occur. This concludes the proof.}

\subsection{Versal deformation of the Fermat hypersurface}

Let us choose $X\subset Y = B\times P$ to be the family of hypersurfaces of degree $d$ in $P = \PP^{n+1}$ given by
\begin{equation}\label{COMVE484}
F(x_0,x_1,...,x_n, x_{n+1}, t_f) = x_0^d + x_1^d + ... + x_{n+1}^d +
\sum_{f\in J_d} t_f f = 0,
\end{equation}
where $(x_0,x_1,...,x_{n+1})$ are the homogeneous coordinates of $\PP^{n+1}$, $J_d$ is the set of monomials in $x_i$ given by
\begin{equation}\label{COMVE486}
\begin{aligned}
J_d &= \big\{
x_0^{m_0} x_1^{m_1} ... x_{n+1}^{m_{n+1}}:
m_0, m_1, ..., m_{n+1}\in \BN,\\
&\hspace{105pt} m_0 + m_1 + ... + m_{n+1} = d\text{ and}\\
&\hspace{105pt} m_0,m_1,...,m_{n+1}\le d-2
\big\}
\end{aligned}
\end{equation}
and $(t_f)$ are the coordinates of the affine space $B =
\Span_\BC J_d \cong \A^N$ for
\begin{equation}\label{COMV488}
N = h^0(\CO_P(d)) - h^0(T_P) - 1 = \binom{d+n+1}{n+1} - (n+2)^2.
\end{equation}
We may regard $X/B$ as a versal deformation of the Fermat hypersurface.

At a general point $b\in B$, $X/B$ is obviously versal, i.e., the Kodaira-Spencer map
\begin{equation}\label{COMVE134}
\begin{tikzcd}
T_{B,b} \arrow{r}{\sim} & H^0(\N_{X_b}) / \eta(H^0(X_b, T_P))\ar[hook]{d}\\
& H^1(T_{X_b})
\end{tikzcd}
\end{equation}
is an isomorphism, where $\eta$ is the map in
\begin{equation}\label{COMVE464}
\begin{tikzcd}
0 \ar{r} & T_X \ar{r} & T_Y\otimes \CO_X \ar{r}{\eta} & \N_X \ar{r} & 0.
\end{tikzcd}
\end{equation}
More explicitly, \eqref{COMVE134} is equivalent to saying
\begin{equation}\label{COMVE485}
\Span\left\{x_i \frac{\partial F}{\partial x_j}\right\} \oplus
\Span J_d = H^0(\N_{X_b}) = H^0(X_b, \CO(d))
\end{equation}
for $b\in B$ general.

Let $\E = \CO_P(1)^{\oplus n+2}$ be the Euler bundle on $P$. Then
\begin{equation}\label{COMVE455}
H^0(T_P) \cong
\frac{H^0(\E)}{(\alpha)} =
\Span\left\{
x_i\frac{\partial}{\partial x_j}
\right\}/(\alpha)
\end{equation}
by the Euler sequence
\begin{equation}\label{COMVE418}
\begin{tikzcd}
0 \arrow{r} & \CO_P \arrow{r} & \CO_P(1)^{\oplus (\red{n+2})}\arrow[equal]{d} \arrow{r} & T_P \arrow{r} & 0\\
&& \E
\end{tikzcd}
\end{equation}
and
\begin{equation}\label{COMVE459}
\eta\left(\red{x_i}\frac{\partial}{\partial x_{j}}\right)
= \red{x_i}\frac{\partial F}{\partial x_j} \text{ and }
\eta\left(\frac{\partial}{\partial t_f}\right)
= \frac{\partial F}{\partial t_f} = f
\end{equation}
for $j = 0,1,2,...,n+1$ and $f\in J_d$, where
\begin{equation}\label{COMVE009}
\alpha = \sum_{i=0}^{n+1} x_i \frac{\partial}{\partial x_i}.
\end{equation}

We are going to show that no two distinct points on a very general fiber $X_b$ of $X/B$ are $\Gamma$-equivalent over $\BQ$ when $d=2n+2\ge 6$. First a definition.

\begin{defn}\label{COMVDEF2SCHEME}
Let $Z$ be a $0$-dimensional scheme of length $2$ in $P = \PP^{n+1}$ with homogeneous coordinates
$(x_0,x_1,...,x_{n+1})$.
We call $Z$ {\em generic} with respect to the homogeneous coordinates $(x_i)$ if
\begin{equation}\label{COMVE523}
H^0(\CO_Z(1)) = \Span \{x_j: j\ne i\} \text{ for every } i=0,1,\ldots,n+1.
\end{equation}
Otherwise, we call $Z$ {\em special} with respect to $(x_i)$. We call $Z$ {\em very special} with respect to $(x_i)$ if
\begin{equation}\label{COMVE058}
\# \{x_i: x_i\in H^0(I_Z(1))\} = n = h^0(\CO_P(1)) - 2
\end{equation}
where $I_Z$ is the ideal sheaf of $Z$ in $P$.
\magenta{Geometrically, $Z = \{p\ne q\}$ being special means that $Z$ is projected to one point under the projection sending $(x_0,x_1,...,x_{n+1})$ to $(x_0,x_1,...,\widehat{x}_i,...,x_{n+1})$ for some $i$ and being very special means that $Z$ is contained in a line cut out by $n$ coordinate hyperplanes.}
\end{defn}

\begin{rem}\label{COMVREMDEF2SCHEME}
Clearly, these notions depend on the choice of homogeneous coordinates of $P$. More generally, we can define
these terms with respect to a basis of $H^0(L)$ for an arbitrary very ample line bundle $L$ on $P$.

When the choice of homogeneous coordinates is clear, we simply say $Z$ is generic (resp. special/very special).

Obviously, being very special implies being special.

There always exist $i\ne j$ such that
$x_i$ and $x_j$ span $H^0(\CO_Z(1))$ since $\CO_P(1)$ is very ample.
Without loss of generality, we usually make the assumption that $(i,j) = (0,1)$, i.e.,
\begin{equation}\label{COMVE522}
H^0(\CO_Z(1)) = \Span \{x_0, x_1\}.
\end{equation}
Under the hypothesis of \eqref{COMVE522}\magenta{\ and swapping $0,1$ if needed}, $Z$ is special if and only if
\begin{equation}\label{COMVE022}
\Span\{x_0, x_1\} = H^0(\CO_Z(1)) \supsetneq \Span \{x_1, x_2, ..., x_{n+1}\}.
\end{equation}
Furthermore, by re-arranging $x_2, ..., x_{n+1}$, we may assume that there exists
$1\le a\le n+1$ such that
\begin{equation}\label{COMVE059}
x_1,..., x_a\not\in H^0(I_Z(1))
\text{ and }
x_{a+1}, ..., x_{n+1}\in H^0(I_Z(1)).
\end{equation}
Of course, $Z$ is very special if and only if $a = 1$.
\end{rem}

We are considering two cases: with respect to $(x_j)$\magenta{, for a very general point $b\in B$},
\begin{description}
\item[Generic case]
$Z = \{\sigma_1(b), \sigma_2(b)\} = \{p_1, p_2\}$ is generic or
\item[Special case]
$Z = \{\sigma_1(b), \sigma_2(b)\}$ is special.
\end{description}

\subsection{A basis for $W_{X,b}$}

For convenience, we identify the tangent space $T_{B,b}$ with $\Span J_d$. Then
$\eta(f) = f$ for all $f\in \Span J_d$.

We start the verification of \eqref{COMVE600} by studying the space $W_{X,b}$
defined by \eqref{COMVE461}. It has a basis given by:

\begin{lem}\label{COMVLEMVXB}
Let $P = \PP^{n+1}$ and $X\subset Y = B\times P$ be the family of hypersurfaces in $P$ given by
\eqref{COMVE484} over $B = \Span J_d$ for $d\ge 3$. Then
\begin{equation}\label{COMVE510}
\begin{aligned}
\W_{X,b} &= \big\{ \omega \in H^0(X_b, \E(1)): \eta(\omega)\in \Span J_{d+1}
\big\}
\\
&=
\Span\big\{
\omega_{ijk}: 0\le i,j,k\le n+1,\ i\le j \text{ and } i,j\ne k
\big\}
\end{aligned}
\end{equation}
has dimension
\begin{equation}\label{COMVE025}
\dim \W_{X,b} = (n+2) \binom{n+2}2
\end{equation}
for $b=(t_f)$ in an open neighborhood of $0$, where
\begin{equation}\label{COMVE514}
\begin{aligned}
\omega_{ijk} &= x_i x_j\frac{\partial}{\partial x_k} \text{ for } i\ne j\ne k \text{ and}\\
\omega_{iik} &= x_i^2 \frac{\partial}{\partial x_k} - \sum_{j\ne i}
c_{ijk}
x_i x_j
\frac{\partial}{\partial x_i}
\text{ for } i\ne k
\end{aligned}
\end{equation}
with
\begin{equation}\label{COMVE026}
c_{ijk} = \frac{d-1}{d!}\left(\frac{\partial^d F}{\partial x_i^{d-2} \partial x_j \partial x_k}\right)
= \begin{cases}
2 d^{-1} t_f & \text{if } i\ne j=k\\
d^{-1} t_f & \text{if } i\ne j\ne k
\end{cases}
\end{equation}
for $f= x_i^{d-2} x_j x_k$.
Here we consider $\eta$ as a map
$H^0(\E(1))\to H^0(\CO(d+1))$ given by \eqref{COMVE459}.
\end{lem}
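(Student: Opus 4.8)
The plan is to exhibit the $\omega_{ijk}$ as an explicit linearly independent family that lies in $\W_{X,b}$ for \emph{every} $b$, and to bound $\dim\W_{X,b}$ from above by a semicontinuity argument anchored at the Fermat point $b=0$. As a preliminary I would note that, since $d\ge 3$, one has $H^0(\CO_P(2-d))=H^1(\CO_P(2-d))=0$ on $P=\PP^{n+1}$, so restriction identifies $H^0(X_b,\E(1))$ with the fixed vector space $H^0(P,\E(1))=\bigoplus_k H^0(\CO_P(2))\,\tfrac{\partial}{\partial x_k}$, and via \eqref{COMVE459} the operator $\eta$ becomes $\omega=\sum_i h_i\tfrac{\partial}{\partial x_i}\mapsto \sum_i h_i\tfrac{\partial F}{\partial x_i}\in H^0(P,\CO_P(d+1))$, whose matrix in fixed bases is affine in $b=(t_f)$. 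I split the monomial basis of $H^0(P,\E(1))$ into $A=\{x_ax_b\tfrac{\partial}{\partial x_k}: a\le b,\ a,b\ne k\}$, of cardinality exactly $(n+2)\binom{n+2}2$, and its complement $B=\{x_kx_b\tfrac{\partial}{\partial x_k}: 0\le b\le n+1\}$, so that $H^0(P,\E(1))=\langle A\rangle\oplus\langle B\rangle$ and $\langle B\rangle=\{\sum_k x_k\ell_k\tfrac{\partial}{\partial x_k}:\ell_k\ \text{linear}\}$.

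The heart of the argument is to check $\omega_{ijk}\in\W_{X,b}$ for all $b$. When $i\ne j$ this is immediate: $\eta(\omega_{ijk})=x_ix_j\tfrac{\partial F}{\partial x_k}$, and since $i,j,k$ are distinct and every $f\in J_d$ has all exponents $\le d-2$, each monomial occurring in $x_ix_j\tfrac{\partial F}{\partial x_k}$ has all exponents $\le d-1$, i.e.\ lies in $\Span J_{d+1}$. When $i=j$ I single out, inside $x_i^2\tfrac{\partial F}{\partial x_k}$, the ``bad'' monomials — those with some exponent $\ge d$. A direct monomial count shows these are precisely the $x_i^d x_l$ with $l\ne i$, and that the coefficient of $x_i^d x_l$ equals $d\,c_{ilk}$; this is exactly what the normalization $c_{ilk}=\tfrac{d-1}{d!}\,\partial^d F/\partial x_i^{d-2}\partial x_l\partial x_k$ of \eqref{COMVE026}, including the factor $2$ in the case $l=k$, is designed to achieve. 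The Fermat part of the correction $-\sum_{l\ne i}c_{ilk}x_ix_l\tfrac{\partial F}{\partial x_i}$ contributes $-d\sum_{l\ne i}c_{ilk}x_i^d x_l$, cancelling the bad part exactly, while the remainder $-\sum_{l\ne i}c_{ilk}x_ix_l\sum_f t_f\tfrac{\partial f}{\partial x_i}$ again has all exponents $\le d-1$; hence $\eta(\omega_{iik})\in\Span J_{d+1}$. Thus $M:=\langle\{\omega_{ijk}\}\rangle\subseteq\W_{X,b}$ for every $b$.

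Each $\omega_{ijk}$ equals the single $A$-basis vector $x_ix_j\tfrac{\partial}{\partial x_k}$ plus a combination of $B$-vectors, and distinct $(i,j,k)$ give distinct $A$-vectors which exhaust $A$; hence the projection $M\to\langle A\rangle$ killing $\langle B\rangle$ is an isomorphism, so $\{\omega_{ijk}\}$ is a basis of $M$, $\dim M=(n+2)\binom{n+2}2$, and $H^0(P,\E(1))=M\oplus\langle B\rangle$. For the reverse inequality I claim $\W_{X,b}\cap\langle B\rangle=0$ near $b=0$: at $b=0$ one has $\eta\big(\sum_k x_k\ell_k\tfrac{\partial}{\partial x_k}\big)=d\sum_k\ell_k x_k^d$, every monomial of which has $x_k$-exponent $\ge d$ and so lies outside $\Span J_{d+1}$, and these monomials are pairwise distinct, so $\eta(\omega)\in\Span J_{d+1}$ forces $\omega=0$; since the matrix of $\eta\bmod\Span J_{d+1}$ restricted to $\langle B\rangle$ is affine in $b$, the locus where it fails to be injective is Zariski-closed and misses $0$, so $\W_{X,b}\cap\langle B\rangle=0$ on a Zariski-open neighbourhood $U\ni 0$. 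For $b\in U$, writing $\omega\in\W_{X,b}$ as $\omega=m+\beta$ with $m\in M\subseteq\W_{X,b}$ and $\beta\in\langle B\rangle$ gives $\beta=\omega-m\in\W_{X,b}\cap\langle B\rangle=0$, so $\W_{X,b}=M$; this yields both the asserted basis and the dimension formula \eqref{COMVE025}.

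The step I expect to be the main obstacle is the $i=j$ bookkeeping in the second paragraph: precisely identifying the bad monomials of $x_i^2\tfrac{\partial F}{\partial x_k}$, matching their coefficients with $d\,c_{ilk}$ (and catching the factor $2$ when $l=k$), and simultaneously verifying that the correction term introduces no new bad monomials. The preliminary cohomology vanishing, the counting of $A$, the linear independence, and the semicontinuity step are all routine.
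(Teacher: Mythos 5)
Your proof is correct and follows essentially the same route as the paper's: verify by the explicit cancellation of the $x_i^d x_l$ terms that each $\omega_{ijk}$ lies in $\W_{X,b}$ for all $b$, observe that at $b=0$ these specialize to the monomial basis $\{x_ix_j\,\partial/\partial x_k : i,j\ne k\}$ of $\W_{X,0}$, and conclude by semicontinuity near $b=0$. Your decomposition $H^0(\E(1))=\langle A\rangle\oplus\langle B\rangle$ and the explicit check that $\W_{X,b}\cap\langle B\rangle=0$ on a neighbourhood of $0$ merely make precise the upper-bound step that the paper dispatches with ``it suffices to verify this for $b=0$.''
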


\begin{proof}
We have
\begin{equation}\label{COMVE513}
\eta\left(x_i x_j\frac{\partial}{\partial x_k}\right)
= x_i x_j \frac{\partial F}{\partial x_k} = d x_i x_j x_k^{d-1}
+ \sum_{f\in J_d} t_f x_i x_j \frac{\partial f}{\partial x_k}.
\end{equation}
It is easy to check that
\begin{equation}\label{COMVE511}
\eta(\omega_{ijk}) = x_i x_j\frac{\partial F}{\partial x_k} \in \Span J_{d+1}
\end{equation}
for $i\ne j\ne k$ and
\begin{equation}\label{COMVE024}
\eta(\omega_{iik}) = x_i^2\frac{\partial F}{\partial x_k}
-  \sum_{j\ne i}\frac{d-1}{d!}
x_i x_j
\left(
\frac{\partial^d F}{\partial x_i^{d-2} \partial x_j \partial x_k}
\right)
\frac{\partial F}{\partial x_i} \in \Span J_{d+1}
\end{equation}
for $i\ne k$. Hence $\omega_{ijk}\in \W_{X,b}$ for all $i,j\ne k$.

To show that $\{\omega_{ijk}: i\le j \text{ and } i,j\ne k\}$
forms a basis of $\W_{X,b}$ in an open neighborhood of $0$, it suffices to verify this for $b=0$: clearly,
\begin{equation}\label{COMVE512}
\Big\{\omega_{ijk}\Big|_{b=0}: i\le j \text{ and } i,j\ne k\Big\} = \left\{x_i x_j\frac{\partial}{\partial x_k}: i\le j \text{ and } i,j\ne k\right\}
\end{equation}
is a basis of $\W_{X,0}$. Therefore, \eqref{COMVE510} and \eqref{COMVE025} follow.
\end{proof}

Clearly, $W_{X,b}$ is the image of $\W_{X,b}$ under the map
\begin{equation}\label{COMVE302}
\begin{tikzcd}
H^0(X_b, \E(1))\ar[two heads]{r} & H^0(X_b, T_P(1)).
\end{tikzcd}
\end{equation}
More precisely, let \magenta{$\widetilde{W}_{X,b}$} be the lift of $W_{X,b}$ in
$H^0(X_b, \E(1))$. Then
\begin{equation}\label{COMVE303}
\magenta{\widetilde{W}_{X,b}} = \W_{X,b} \oplus \alpha \otimes H^0(\CO(1))
\end{equation}
where $\W_{X,b} \cap \alpha \otimes H^0(\CO(1)) = 0$ because
\begin{equation}\label{COMVE304}
\Span J_{d+1} \cap \eta\left(\alpha \otimes H^0(\CO(1))\right)
= \Span J_{d+1} \cap F\otimes H^0(\CO(1)) = 0.
\end{equation}

\subsection{A key observation on $L_\lambda$}

We observe the following:

\begin{lem}\label{COMVLEMLLAMBDA}
Let $P = \PP^{n+1}$ and $X\subset Y = B\times P$ be the family of hypersurfaces in $P$ given by \eqref{COMVE484} over $B = \Span J_d$.
For $b\in B$, a $0$-dimensional subscheme $Z\subset X_b$ of length $2$ and
$L_\lambda\in\Hom(\pi_B^* T_{B,b}, T_P\otimes \CO_Z)$,
if
\begin{equation}\label{COMVE480}
L_\lambda\left(f\right) \ne 0
\text{ for some } f\in H^0(I_Z(1)) \otimes \Span J_{d-1} \subset \Span J_d,
\end{equation}
then \eqref{COMVE600} holds.
\end{lem}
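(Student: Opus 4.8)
The plan is to write down explicitly a nonzero element of $W_{X,b,Z,\lambda}$ vanishing at one of the two points of $Z=\{p_1,p_2\}$, which is exactly what \eqref{COMVE600} demands; I treat the case where $Z$ is reduced, which is the one needed for Theorem~\ref{COMVTHMHYPERSURFACE}. First I would normalize the hypothesis: since $L_\lambda$ is linear and $H^0(I_Z(1))\otimes \Span J_{d-1}$ is spanned by pure tensors $\ell\otimes g$ with $\ell\in H^0(I_Z(1))$ a linear form and $g\in J_{d-1}$ a monomial, the assumption that $L_\lambda$ is nonzero on this space yields a single such $\ell,g$ with $L_\lambda(\ell g)\ne 0$. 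Put $f=\ell g$. As $g$ has all exponents $\le d-3$, the monomials of $\ell g$ have exponents $\le d-2$, so $f\in\Span J_d=T_{B,b}$ is a genuine tangent vector, and $f$ vanishes on $Z$ since $\ell$ does. (When $L_\lambda$ satisfies \eqref{COMVE467}, this also forces $L_\lambda(f)$ to be tangent to $X_b$ along $Z$, which explains the shape of the output but is not needed.) After possibly relabeling the two points of $Z$, assume $L_\lambda(f)$ does not vanish at $p_1$.

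Now the construction. Choose a linear form $\ell'\in\Gamma_b(\CO(1))$ with $\ell'(p_1)\ne 0$ and $\ell'(p_2)=0$, possible as $p_1\ne p_2$; the same exponent bound gives $\ell'g\in\Span J_d=T_{B,b}$. Set $\tau:=\ell\otimes(\ell'g)-\ell'\otimes(\ell g)\in\Gamma_b(\CO(1))\otimes T_{B,b}$. Because $\eta$ is the identity on $T_{B,b}=\Span J_d$ (viewing tangent vectors as degree-$d$ polynomials), $\eta(\tau)=\ell\cdot(\ell'g)-\ell'\cdot(\ell g)=0$ in $\Gamma_b(\N_X(1))$; hence the pair $(\omega,\tau)=(0,\tau)$ meets the condition $\eta(\omega+\tau)=0$ of \eqref{COMVE491}, so
\begin{equation*}
w:=L_\lambda(\tau)=(\ell|_Z)\,L_\lambda(\ell'g)-(\ell'|_Z)\,L_\lambda(f)\in W_{X,b,Z,\lambda}.
\end{equation*}
Evaluating at the points of $Z$ and using $\ell(p_1)=\ell(p_2)=0$ gives $w(p_i)=-\ell'(p_i)\,L_\lambda(f)(p_i)$, so $w(p_2)=0$ while $w(p_1)=-\ell'(p_1)L_\lambda(f)(p_1)\ne 0$. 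Thus $w$ is a nonzero element of $W_{X,b,Z,\lambda}\cap H^0(Z,T_P(1)\otimes I_{p_2})$, establishing \eqref{COMVE600} with $p=p_2$.

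There is no serious obstacle once the choice of $\tau$ is in hand: the only genuine idea is to exploit the tautological relation $\ell\cdot(\ell'g)=\ell'\cdot(\ell g)$ inside $\Span J_d$ so that $\eta(\tau)=0$ and $\omega$ may be taken to be $0$. (The naive alternative, taking $\tau=\ell'\otimes f$ and solving $\eta(\omega)=-\eta(\tau)$ for $\omega\in\Gamma_b(T_P(1))$, fails, since for $d=2n+2$ with $n\ge 2$ the degree-$(d+1)$ piece of the Jacobian ideal of $F$ is a proper subspace of $H^0(\CO(d+1))$.) The rest is routine bookkeeping: that the relevant products stay inside $\Span J_d$, and that $\eta$ and $L_\lambda$ are compatible with multiplication by linear forms.
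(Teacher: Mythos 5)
Your construction is correct and is essentially the paper's own argument: the paper also writes $f=\ell g$, forms $\tau_p=l_p\otimes f-l\otimes l_p g$ for a linear form $l_p$ vanishing at one point of $Z$ but not on all of $Z$, uses $\eta(\tau_p)=0$ to place $L_\lambda(\tau_p)=l_p\,L_\lambda(f)$ in $W_{X,b,Z,\lambda}$, and exploits $\ell|_Z=0$ to kill the other term. Your version is organized a bit more directly (you first locate a point where $L_\lambda(f)\ne 0$ and then choose $\ell'$ accordingly, rather than running the paper's dichotomy ``either $L_\lambda(\tau_p)\ne 0$ and we are done, or it is zero and we accumulate vanishing conditions on $L_\lambda(f)$''), but the mechanism is identical.

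The one substantive difference is scope: the lemma is stated for an arbitrary length-$2$ subscheme $Z\subset X_b$, and the paper's proof also treats the non-reduced case, where $Z$ is supported at a single point $p$. There your choice of $\ell'$ (nonzero at one point of $Z$, zero at the other) is unavailable; the paper instead first shows $l_p\,L_\lambda(f)$ lies in $W_{X,b,Z,\lambda}$ and vanishes at $p$ for $l_p(p)=0$, $l_p\notin H^0(I_Z(1))$, and, if that element is zero, repeats the argument with some $l_q$ satisfying $l_q(p)\ne 0$ to force $L_\lambda(f)=0$, a contradiction. You flag the restriction explicitly, and it is true that only the reduced case is invoked in the proof of Theorem~\ref{COMVTHMHYPERSURFACE} (the two sections are disjoint), so nothing downstream is affected; but as a proof of the lemma as stated it is incomplete without that extra half-page of argument.
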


\begin{proof}
Obviously, \magenta{under the hypothesis, }\eqref{COMVE480} holds
for some $f=lg$ with $l\in H^0(I_Z(1))$ and $g\in J_{d-1}$.

For each point $p\in \supp(Z)$, we choose $l_p\in H^0(\CO_P(1))$ such that $l_p(p) = 0$ and $l_p\not\in H^0(I_Z(1))$ and let
\begin{equation}\label{COMVE489}
\tau_p = l_p \otimes f - l \otimes l_p g
\in H^0(\CO_{X_b}(1)) \otimes T_{B,b}.
\end{equation}
Then $\eta(\tau_p) = 0$ so $L_\lambda(\tau_p) \in W_{X,b,Z,\lambda}$. Clearly,
\begin{equation}\label{COMVE503}
L_\lambda(\tau_p) = l_p L_\lambda\left(f\right) - l L_\lambda\left(l_p g\right) = l_p L_\lambda\left(f\right)
\end{equation}
since $l\in H^0(I_Z(1))$. Then by our choice of $l_p$,
$L_\lambda(\tau_p)$ vanishes at $p$.

If $L_\lambda(\tau_p)\ne 0$\magenta{\ for some $p \in \supp(Z)$}, then \eqref{COMVE600} follows. Otherwise,
\begin{equation}\label{COMVE504}
l_p L_\lambda\left(f\right) = 0\magenta{\text{ for all } p \in \supp(Z)}.
\end{equation}
Since $l_p\not\in H^0(I_Z(1))$,
\eqref{COMVE504} implies that $L_\lambda\left(f\right)$ vanishes at all $p\in \supp(Z)$.

If $Z$ consists of two distinct points, then
we must have
\begin{equation}\label{COMVE505}
L_\lambda\left(f\right) = 0,
\end{equation}
which contradicts our hypothesis \eqref{COMVE480}.
\magenta{Although we only need the lemma for this case, we will prove it for all $Z$ for the sake of completeness.}

If $Z$ is supported at a single point $p$, then $L_\lambda\left(f\right)$ vanishes at $p$. Applying the same argument to
$\tau_q = l_q \otimes f - l \otimes l_q g$ for some $l_q\in H^0(\CO_P(1))$ satisfying $l_q(p) \ne 0$, we have
\begin{equation}\label{COMVE301}
L_\lambda(\tau_q) = l_q L_\lambda\left(f\right) - l L_\lambda\left(l_q g\right) = l_q L_\lambda\left(f\right) \in W_{X,b,Z,\lambda}
\end{equation}
vanishing at $p$. Again, we have either \eqref{COMVE600} or \eqref{COMVE505} since $l_q(p)\ne 0$.
\end{proof}

Let us assume that \eqref{COMVE505} holds for all $f\in H^0(I_Z(1)) \otimes \Span J_{d-1}$.
Otherwise, we are done by the above lemma.
Then $L_\lambda: T_{B,b}\to H^0(Z, T_P)$ factors through
\begin{equation}\label{COMVE488}
\frac{\Span J_d}{H^0(I_Z(1)) \otimes \Span J_{d-1}}
\end{equation}
and it can be regarded as a map
\begin{equation}\label{COMVE509}
\begin{tikzcd}
\displaystyle{
	\frac{\Span J_d}{H^0(I_Z(1)) \otimes \Span J_{d-1}}
} \ar{r}{L_\lambda} & H^0(Z, T_P).
\end{tikzcd}
\end{equation}

\subsection{The space $H^0(I_Z(1)) \otimes \Span J_{d-1}$}

Let us figure out the space \eqref{COMVE488}.
Obviously,
\begin{equation}\label{COMVE450}
H^0(I_Z(1)) \otimes \Span J_{d-1} \subset \Span J_d
\cap H^0(I_Z(1)) \otimes H^0(\CO_P(d-1)).
\end{equation}
Furthermore, since $H^0(I_Z(1)) \otimes H^0(\CO_P(d-1))$ is the kernel of the map
\begin{equation}\label{COMVE527}
\begin{tikzcd}
H^0(\CO_P(d)) \ar[two heads]{r}{\xi} \ar[equal]{d} & \Sym^d H^0(\CO_Z(1))\\
\Sym^d H^0(\CO_P(1)) \ar{ur}
\end{tikzcd}
\end{equation}
we may write \eqref{COMVE450} as
\begin{equation}\label{COMVE023}
H^0(I_Z(1)) \otimes \Span J_{d-1} \subset \Span J_d\cap \ker(\xi).
\end{equation}
Actually, this inclusion is an equality for $Z$ generic:

\begin{lem}\label{COMVLEMJD}
Let $P = \PP^{n+1}$, $J_d$ be defined in \eqref{COMVE486}
and $Z$ be a $0$-dimensional subscheme of $P$ of length $2$.
If \magenta{$d\ge 5$} and $Z$ is generic with respect to $(x_i)$,
then
\begin{equation}\label{COMVE519}
\begin{aligned}
H^0(I_Z(1)) \otimes \Span J_{d-1} &= \Span J_{d} \cap
H^0(I_Z(1)) \otimes H^0(\CO_P(d-1))\\
&= \Span J_{d} \cap \ker(\xi).
\end{aligned}
\end{equation}
Or equivalently, $H^0(I_Z(1)) \otimes \Span J_{d-1}$ is the kernel of the map
\begin{equation}\label{COMVE506}
\begin{tikzcd}
\Span J_d \ar{r}[above]{\xi} & \Sym^d H^0(\CO_Z(1)).
\end{tikzcd}
\end{equation}
In addition,
\begin{equation}\label{COMVE028}
\begin{tikzcd}
\displaystyle{\frac{\Span J_d}{H^0(I_Z(1)) \otimes \Span J_{d-1}}} \ar{r}[above]{\xi}[below]{\sim} & \Sym^d H^0(\CO_Z(1))
\end{tikzcd}
\end{equation}
is an isomorphism.
\end{lem}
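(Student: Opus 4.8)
The plan is to get the last two displayed identities in \eqref{COMVE519} essentially for free and to prove the first one, equivalently the isomorphism \eqref{COMVE028}, by a dimension count. Since $\CO_P(1)$ is very ample, the restriction $H^0(\CO_P(1))\to H^0(\CO_Z(1))$ is surjective with kernel $H^0(I_Z(1))$, so applying $\Sym^d$ shows that the kernel of the map $\xi$ in \eqref{COMVE527}, viewed inside $H^0(\CO_P(d))$, is precisely $H^0(I_Z(1))\cdot H^0(\CO_P(d-1))$; this is the second equality of \eqref{COMVE519}. Next, a product $\ell g$ of a linear form $\ell$ with a monomial $g\in J_{d-1}$ has every exponent at most $(d-3)+1=d-2$, so $H^0(I_Z(1))\cdot\Span J_{d-1}\subset\Span J_d$, and this space obviously lies in $\ker\xi$. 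Thus the only content is the reverse inclusion $\Span J_d\cap\ker\xi\subset H^0(I_Z(1))\cdot\Span J_{d-1}$, and since the left-hand side has a computable dimension it suffices to match the two dimensions.

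To compute $\dim(\Span J_d\cap\ker\xi)$ I would show that $\xi$ restricts to a surjection $\Span J_d\twoheadrightarrow\Sym^d H^0(\CO_Z(1))$. Arrange coordinates so that $H^0(\CO_Z(1))=\Span\{\bar x_0,\bar x_1\}$, where $\bar x_i$ is the restriction of $x_i$ to $Z$. The monomials $x_0^ax_1^{d-a}$ with $2\le a\le d-2$ lie in $J_d$ and map to the interior basis vectors $\bar x_0^a\bar x_1^{d-a}$ of $\Sym^d H^0(\CO_Z(1))$. For the four remaining basis vectors I would use the genericity hypothesis \eqref{COMVE523}: its $i=0$ instance says $\bar x_1,\dots,\bar x_{n+1}$ span $H^0(\CO_Z(1))$, so the monomials $x_0^{d-2}x_jx_l$ with $1\le j\le l\le n+1$ --- all in $J_d$ because $d\ge 4$ --- map onto $\bar x_0^{d-2}\cdot\Sym^2 H^0(\CO_Z(1))=\langle\bar x_0^d,\bar x_0^{d-1}\bar x_1,\bar x_0^{d-2}\bar x_1^2\rangle$, and symmetrically the $i=1$ instance yields $\langle\bar x_1^d,\bar x_0\bar x_1^{d-1},\bar x_0^2\bar x_1^{d-2}\rangle$. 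Together these span all of $\Sym^d H^0(\CO_Z(1))$, proving surjectivity, so $\dim(\Span J_d\cap\ker\xi)=|J_d|-(d+1)$ with $|J_d|=\binom{d+n+1}{n+1}-(n+2)^2$.

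It then remains to see that $\dim\bigl(H^0(I_Z(1))\cdot\Span J_{d-1}\bigr)\ge|J_d|-(d+1)$, which together with the inclusion already noted forces equality, hence the lemma (the isomorphism \eqref{COMVE028} being then immediate, $\xi|_{\Span J_d}$ being surjective with that kernel). Pick a basis $k_2,\dots,k_{n+1}$ of $H^0(I_Z(1))$ by linear forms; being linearly independent they form a regular sequence in $\Sym^\bullet H^0(\CO_P(1))$. The Koszul differentials then restrict to a complex $\bigwedge^\bullet H^0(I_Z(1))\otimes\Span J_{d-\bullet}$, the restriction making sense because a linear form times a monomial of $J_m$ lies in $\Span J_{m+1}$. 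I claim that when $d$ is large enough that $|J_{d-p}|=\binom{d-p+n+1}{n+1}-(n+2)^2$ for $0\le p\le n$ (in particular for $d=2n+2$, the case used later), this truncated complex is exact except at the right-hand term $\Span J_d$; granting this, the Euler characteristic gives
\[
\dim\!\Bigl(\Span J_d\big/\,H^0(I_Z(1))\cdot\Span J_{d-1}\Bigr)=\sum_{p=0}^{n}(-1)^p\binom{n}{p}\,|J_{d-p}| .
\]
The $(n+2)^2$-terms cancel because $\sum_p(-1)^p\binom np=0$, and $\sum_p(-1)^p\binom np\binom{d+n+1-p}{n+1}=\binom{d+1}{1}=d+1$ by differencing $\binom{\bullet}{n+1}$ a total of $n$ times, so the sum equals $d+1$, as required. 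When $n=1$ the Koszul input degenerates to the injectivity of multiplication by the single nonzero linear form $k_2$ on $\Span J_{d-1}$, which is automatic in a domain.

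The exactness of the truncated Koszul complex is the one genuinely technical point. The full Koszul complex of $k_2,\dots,k_{n+1}$ is exact, but the truncation to the monomial subspaces $\Span J_\bullet$ can develop homology --- the Euler-characteristic identity above already fails outside the stable range, and for $Z$ in special position one should not expect vanishing either --- so the heart of the matter is to show the middle homology vanishes for generic $Z$ and $d$ large. I would attack this by a straightening argument: rewrite each monomial of $J_d$ modulo $H^0(I_Z(1))\cdot\Span J_{d-1}$ using the relations $x_k\equiv\ell_k$ (with $\ell_k=x_k-k_k\in\Span\{x_0,x_1\}$), reduce to the $n$-dimensional family of ``boundary'' monomials (those carrying an exponent equal to $d-2$), and show that genericity supplies enough independent relations in $H^0(I_Z(1))$ to absorb those as well.
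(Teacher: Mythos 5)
Your reductions of the second equality in \eqref{COMVE519} and of the inclusion $H^0(I_Z(1))\otimes\Span J_{d-1}\subset\Span J_d\cap\ker(\xi)$ are fine, and your proof that $\xi$ maps $\Span J_d$ onto $\Sym^dH^0(\CO_Z(1))$ is correct (it is essentially the computation \eqref{COMVE474} in the paper). But the entire content of the lemma is the reverse inclusion, and you have reduced it to the inequality $\dim\bigl(H^0(I_Z(1))\cdot\Span J_{d-1}\bigr)\ge |J_d|-(d+1)$ without proving it. The Koszul route you propose rests on the exactness, away from the right-hand end, of the truncated complex $\wedge^\bullet H^0(I_Z(1))\otimes\Span J_{d-\bullet}$; you state this as a claim, correctly observe that it is not a formal consequence of the regularity of the sequence (the truncation to the monomial subspaces $\Span J_\bullet$ is not a subcomplex of the genuine Koszul resolution, and you yourself note the statement can fail for special $Z$), and do not prove it. For $n=1$ the complex degenerates and your argument closes, but for $n\ge 2$ --- in particular for the case $d=2n+2$, $n\ge 2$ needed for the main theorem --- this exactness claim is precisely as hard as the lemma itself, so the proof is not complete. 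The closing ``straightening argument'' is the right idea but is only a one-sentence sketch, and it is exactly where the delicate point lies: when you rewrite $x_k\equiv ax_0+bx_1\pmod{H^0(I_Z(1))}$ inside a monomial of $J_d$, the cofactor must remain in $\Span J_{d-1}$, i.e.\ keep all exponents $\le d-3$; for a monomial such as $x_0^{d-2}x_1x_k$ the only factorization with cofactor in $J_{d-1}$ pulls out $x_0$, so the single pair $\{x_0,x_1\}$ cannot complete the reduction.

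The paper avoids any dimension count of $H^0(I_Z(1))\otimes\Span J_{d-1}$. It uses genericity to produce, besides $\{x_0,x_1\}$, two further spanning pairs $\{x_1,x_i\}$ and $\{x_0,x_j\}$ of $H^0(\CO_Z(1))$ with $i,j\ne 0,1$, and then carries out the straightening concretely (via \eqref{COMVE520}) to show that the explicit $(d+1)$-element set $S$ of monomials in \eqref{COMVE518} satisfies $\Span J_d=H^0(I_Z(1))\otimes\Span J_{d-1}+\Span(S)$ and $\ker(\xi)\cap\Span(S)=0$; the three pairs are exactly what is needed to reduce the boundary monomials whose $x_0$- or $x_1$-exponent is $d-2$ or $d-3$. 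If you want to salvage your write-up, the shortest repair is to drop the Koszul complex entirely and carry the straightening to completion along these lines.
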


\begin{proof}
To prove \eqref{COMVE519},
it suffices to find a subset $S\subset J_{d}$ such that
\begin{equation}\label{COMVE515}
\Span J_{d} = H^0(I_Z(1)) \otimes \Span J_{d-1} +
\Span(S)
\end{equation}
and
\begin{equation}\label{COMVE521}
H^0(I_Z(1)) \otimes H^0(\CO_P(d-1))\cap
\Span (S) = 0.
\end{equation}

Let us assume \eqref{COMVE522}. By \eqref{COMVE523}, $H^0(\CO_Z(1)) = \Span \{x_1,x_2,...,x_{n+1}\}$ and hence there exists $i\ne 0,1$ such that
\begin{equation}\label{COMVE524}
H^0(\CO_Z(1)) = \Span \{x_1, x_i\}.
\end{equation}
Similarly, we have $H^0(\CO_Z(1)) = \Span \{x_0,x_2,...,x_{n+1}\}$ and hence there exists $j\ne 0,1$ such that
\begin{equation}\label{COMVE525}
H^0(\CO_Z(1)) = \Span \{x_0, x_j\}.
\end{equation}
Then we let
\begin{equation}\label{COMVE518}
\begin{aligned}
S &= \Big\{x_0^{d-3} x_i^3, x_0^{d-3} x_i^2 x_1, x_0^{d-3} x_i x_1^2,\\
&\hspace{24pt} x_0^{d-3} x_1^3, x_0^{d-4} x_1^4, ..., x_0^3 x_1^{d-3},\\
&\hspace{24pt} x_0^2 x_1^{d-3} x_j, x_0 x_1^{d-3} x_j^2, x_1^{d-3} x_j^3 \Big\}.
\end{aligned}
\end{equation}
By \eqref{COMVE522}, \eqref{COMVE524} and \eqref{COMVE525},
for every $k$,
\begin{equation}\label{COMVE520}
\begin{aligned}
x_k &\in H^0(I_Z(1)) + \Span \{x_0, x_1\},\\
x_k &\in H^0(I_Z(1)) + \Span \{x_1, x_i\},\text{ and}\\
x_k &\in H^0(I_Z(1)) + \Span \{x_0, x_j\}.
\end{aligned}
\end{equation}
\magenta{To see \eqref{COMVE515}, it suffices to prove that every monomial in $J_d$ lies in the vector space spanned by $H^0(I_Z(1)) \otimes \Span J_{d-1}$ and $S$. For a monomial
$$
x_0^{a_0} x_1^{a_1} x_2^{a_2} ... x_{n+1}^{a_{n+1}} \in J_d
$$
satisfying $a_0 + a_1 \le d-3$, we choose
$a_k = \max(a_2,a_3,...,a_{n+1})$
and write $x_k = l_1 + l_2$ for some $l_1\in H^0(I_Z(1))$ and $l_2\in \Span\{x_0, x_1\}$. Then
$$
\begin{aligned}
&\quad x_0^{a_0} x_1^{a_1} x_2^{a_2} ... x_k^{a_k}... x_{n+1}^{a_{n+1}} =
x_0^{a_0} x_1^{a_1} x_2^{a_2} ... x_k^{a_k-1} (l_1 + l_2) ... x_{n+1}^{a_{n+1}}\\
&\in H^0(I_Z(1)) \otimes \Span J_{d-1} + \Span \Big\{
x_0^{a_0+1} x_1^{a_1} x_2^{a_2} ... x_k^{a_k-1} ... x_{n+1}^{a_{n+1}},
\\
&\hspace{168pt} x_0^{a_0} x_1^{a_1+1} x_2^{a_2} ... x_k^{a_k-1} ... x_{n+1}^{a_{n+1}}
\Big\}
\end{aligned}
$$
Repeating this process, we see that
$$
\begin{aligned}
J_d
\subset H^0(I_Z(1)) \otimes \Span J_{d-1} +
\Span \Big\{
x_0^{a_0} x_1^{a_1} \prod_{k=2}^{n+1}
x_k^{a_k} \in J_d: a_0 + a_1 \ge d-2
\Big\}
\end{aligned}
$$
So it remains to verify that monomials of type
$$
x_0^{a_0} x_1^{a_1} x_2^{a_2} ... x_{n+1}^{a_{n+1}}\in J_d
$$
with $a_0 + a_1 \ge d-2$ lie in $H^0(I_Z(1)) \otimes \Span J_{d-1} + \Span(S)$:

\begin{enumerate}
\item For a monomial $x_0^{d-3} x_a x_b x_c$ with $b,c\ne 0$, we can write
$$
\begin{aligned}
x_a &= l_1 + l_2,\hspace{12pt} \text{for } l_1\in H^0(I_Z(1))
\text{ and } l_2\in \Span \{x_1, x_i\}\\
x_b &= l_3 + l_4,\hspace{12pt} \text{for } l_3\in H^0(I_Z(1))
\text{ and } l_4\in \Span \{x_1, x_i\}\\
x_c &= l_5 + l_6,\hspace{12pt} \text{for } l_5\in H^0(I_Z(1))
\text{ and } l_6\in \Span \{x_1, x_i\}
\end{aligned}
$$
Then
$$
\begin{aligned}
&\quad x_0^{d-3} x_a x_b x_c = x_0^{d-3} (l_1 + l_2) x_b x_c = l_1 x_0^{d-3} x_b x_c + l_2 x_0^{d-3} x_b x_c\\
&\in H^0(I_Z(1)) \otimes \Span J_{d-1} + \Span \{
x_0^{d-3} x_1 x_b x_c, x_0^{d-3} x_i x_b x_c \}\\
& \subset H^0(I_Z(1)) \otimes \Span J_{d-1} + \Span \{
x_0^{d-3} x_1 (l_3 + l_4) x_c, x_0^{d-3} x_i (l_3 + l_4) x_c \}\\
& \subset H^0(I_Z(1)) \otimes \Span J_{d-1} + \Span \{
x_0^{d-3} x_1^2 x_c, x_0^{d-3} x_1 x_i x_c, x_0^{d-3} x_i^2 x_c\}\\
& \subset H^0(I_Z(1)) \otimes \Span J_{d-1} + \Span \Big\{
x_0^{d-3} x_1^2 (l_5 + l_6), x_0^{d-3} x_1 x_i (l_5 + l_6),\\
&\hspace{168pt} x_0^{d-3} x_i^2 (l_5 + l_6)\Big\}\\
&\subset H^0(I_Z(1)) \otimes \Span J_{d-1} + \Span \{
x_0^{d-3} x_1^3, x_0^{d-3} x_1^2 x_i, x_0^{d-3} x_1 x_i^2, x_0^{d-3} x_i^3 \}\\
&\subset H^0(I_Z(1)) \otimes \Span J_{d-1} + \Span(S).
\end{aligned}
$$
\item For a monomial $x_1^{d-3} x_a x_b x_c$ with $b,c\ne 1$, we can write $$
\begin{aligned}
x_a &= l_1 + l_2,\hspace{12pt} \text{for } l_1\in H^0(I_Z(1))
\text{ and } l_2\in \Span \{x_0, x_j\}\\
x_b &= l_3 + l_4,\hspace{12pt} \text{for } l_3\in H^0(I_Z(1))
\text{ and } l_4\in \Span \{x_0, x_j\}\\
x_c &= l_5 + l_6,\hspace{12pt} \text{for } l_5\in H^0(I_Z(1))
\text{ and } l_6\in \Span \{x_0, x_j\}
\end{aligned}
$$
Then by the same argument as in (1), we obtain
$$
\begin{aligned}
x_1^{d-3} x_a x_b x_c
&\in H^0(I_Z(1)) \otimes \Span J_{d-1} + \Span \Big\{
x_1^{d-3} x_0^3, x_1^{d-3} x_0^2 x_j,\\
&\hspace{168pt} x_1^{d-3} x_0 x_j^2, x_1^{d-3} x_j^3 \Big\}\\
&\subset H^0(I_Z(1)) \otimes \Span J_{d-1} + \Span(S).
\end{aligned}
$$
\item For a monomial
$$
x_0^{a_0} x_1^{a_1} x_2^{a_2} ... x_{n+1}^{a_{n+1}}\in J_d
$$
with $a_0, a_1\ge 2$, by substituting every $x_k$ for $k\ge 2$ with
$x_k = l_1 + l_2$ for some $l_1\in H^0(I_Z(1))$ and $l_2\in \Span \{x_0, x_1\}$, we obtain
$$
\begin{aligned}
x_0^{a_0} x_1^{a_1} x_2^{a_2} ... x_{n+1}^{a_{n+1}} &\in H^0(I_Z(1)) \otimes \Span J_{d-1} + \Span \{
x_0^{r} x_1^{d-r}: 2\le r\le d-2 \}\\
&\subset H^0(I_Z(1)) \otimes \Span J_{d-1} + \Span(S)
\end{aligned}
$$
where we have shown that
$$
x_0^{d-2}x_1^2, x_0^2 x_1^{d-2} \in H^0(I_Z(1)) \otimes \Span J_{d-1} + \Span(S)
$$
in (1) and (2).
\end{enumerate}
}

To see \eqref{COMVE521}, we just have to show that $\ker(\xi)\cap \Span(S) = 0$, which is equivalent to
\begin{equation}\label{COMVE474}
\xi\left(\Span(S)\right) = \Sym^{d} H^0(\CO_Z(1))
\end{equation}
since $|S| = \dim \Sym^{d} H^0(\CO_Z(1)) = d+1$. Again it is easy to see from \eqref{COMVE522}, \eqref{COMVE524} and \eqref{COMVE525} that
\begin{equation}\label{COMVE469}
\begin{aligned}
\xi\left(\Span(S)\right) &= \xi \left(
\Span\{ x_0^{d-k} x_1^{k}: k = 0,1,\ldots, d\}
\right)\\
&= \Sym^{d} H^0(\CO_Z(1)).
\end{aligned}
\end{equation}
This also proves that \eqref{COMVE028} is an isomorphism.
\end{proof}

When $Z$ is special, $H^0(I_Z(1)) \otimes \Span J_{d-1}$ is no longer the kernel of the map \eqref{COMVE506}.
Instead, we have the following result when $Z$ is special but not very special.

\begin{lem}\label{COMVLEMJD3}
Let $P = \PP^{n+1}$, $J_d$ be defined in \eqref{COMVE486} and $Z$ be a $0$-dimensional subscheme of $P$ of length $2$.
Suppose that $d\ge 4$, $Z$ satisfies \eqref{COMVE022}
and $\{x_2, ...,x_{n+1}\}\not\subset H^0(I_Z(1))$. Then
\begin{equation}\label{COMVE034}
\begin{aligned}
\Span J_{d} \cap \ker(\xi) &= H^0(I_Z(1)) \otimes \Span J_{d-1}
\\
&\quad + \Span \Big\{
x_0^{d-2}  x_i (x_j - c_j x_1): i\ge 1, j\ge 2\\
&\hspace{72pt} \text{and }
x_j - c_j x_1\in H^0(I_Z(1))
\Big\}.
\end{aligned}
\end{equation}
\end{lem}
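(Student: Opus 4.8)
The plan is to mirror the proof of Lemma~\ref{COMVLEMJD}, the new feature being that for special $Z$ the map $\xi$ is no longer surjective on $\Span J_d$, which is exactly what forces the extra generators. Write $E$ for the second summand on the right of \eqref{COMVE034}; for $j\ge 2$ let $c_j$ be the scalar with $x_j-c_jx_1\in H^0(I_Z(1))$ (it exists and is unique because, under \eqref{COMVE522} and \eqref{COMVE022}, $x_j|_Z$ is a multiple of $x_1|_Z$), and fix an index $k_0\ge 2$ with $c_{k_0}\ne 0$, which exists since not all of $x_2,\dots,x_{n+1}$ lie in $H^0(I_Z(1))$. The inclusion ``$\supseteq$'' is immediate: a product $l\cdot m$ with $l\in H^0(I_Z(1))$, $m\in J_{d-1}$ lies in $\Span J_d$ (multiplying a monomial with all exponents $\le d-3$ by a linear form keeps exponents $\le d-2$) and in $\ker(\xi)$; and each $x_0^{d-2}x_i(x_j-c_jx_1)$ lies in $\Span J_d$ (here $d\ge 4$ keeps the exponent of $x_i$ at most $d-2$) and vanishes on $Z$.

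For ``$\subseteq$'', I would first compute $\xi(\Span J_d)$. The monomial $x_0^{a_0}\prod_{l\ge 1}x_l^{b_l}$ of $J_d$ is sent by $\xi$ to $\bigl(\prod_{l\ge 2}c_l^{b_l}\bigr)x_0^{a_0}x_1^{d-a_0}$ in $\Sym^{d}H^0(\CO_Z(1))$; since $a_0\le d-2$ on $J_d$, the classes $x_0^{d-1}x_1$ and $x_0^{d}$ are never hit, while every $x_0^{e}x_1^{d-e}$ with $0\le e\le d-2$ is hit, using $x_0^{e}x_1^{d-e}$ itself for $2\le e\le d-2$ and $x_0x_1^{d-2}x_{k_0}$, $x_1^{d-2}x_{k_0}^{2}$ for $e=1,0$. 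Thus $\xi(\Span J_d)$ has dimension $d-1$, and
\[
S'=\bigl\{x_1^{d-2}x_{k_0}^{2},\ x_0x_1^{d-2}x_{k_0}\bigr\}\cup\bigl\{x_0^{e}x_1^{d-e}:2\le e\le d-2\bigr\}\subset J_d
\]
is a set of $d-1$ monomials on which $\xi$ is injective.

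The main step is the spanning identity $\Span J_d=\bigl(H^0(I_Z(1))\otimes\Span J_{d-1}+E\bigr)+\Span S'$. Granting it, ``$\subseteq$'' follows formally: given $v\in\Span J_d\cap\ker(\xi)$, write $v=w+s$ with $w$ in the right-hand side of \eqref{COMVE034} (hence $\xi(w)=0$ by ``$\supseteq$'') and $s\in\Span S'$; then $\xi(s)=0$, so $s=0$ by injectivity of $\xi$ on $\Span S'$, and $v=w$.

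To prove the spanning identity I would reduce an arbitrary monomial $m\in J_d$. If the $x_0$-exponent of $m$ is $d-2$, then $m=x_0^{d-2}x_ix_j$ with $1\le i\le j$, and iterating $x_0^{d-2}x_ix_j\equiv c_jx_0^{d-2}x_ix_1\pmod{E}$ brings $m$ to a multiple of $x_0^{d-2}x_1^{2}\in S'$. If the $x_0$-exponent is at most $d-3$ and $m\in\ker(\xi)$, i.e.\ some $x_j$ with $c_j=0$ divides $m$, I factor off $x_j$ to land in $H^0(I_Z(1))\otimes\Span J_{d-1}$, first using $x_l^{d-2}\equiv c_lx_l^{d-3}x_1\pmod{H^0(I_Z(1))}$ on any offending factor so that the cofactor has all exponents $\le d-3$. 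If instead $m\notin\ker(\xi)$, then $m$ involves only $x_0$, $x_1$ and the $x_l$ with $c_l\ne 0$, so the congruences $x_l\equiv c_lx_1$ and $x_1\equiv c_{k_0}^{-1}x_{k_0}\pmod{H^0(I_Z(1))}$, applied so as to keep every intermediate cofactor inside $J_{d-1}$, reduce $m$ modulo $H^0(I_Z(1))\otimes\Span J_{d-1}$ to a scalar multiple of one of the monomials of $S'$. I expect essentially all of the work to lie here: the cofactor-in-$J_{d-1}$ bookkeeping is delicate precisely for monomials carrying an exponent $d-2$ on a variable other than $x_0$, which is what forces the ``shift one power onto $x_1$ or $x_{k_0}$'' manipulations above, and the cramped case $d=4$ (where $J_3$ consists only of squarefree cubics) should be treated separately --- there $n=1$, so the awkward monomials do not arise.
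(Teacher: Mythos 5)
The paper offers no proof to compare against: its ``proof'' of Lemma~\ref{COMVLEMJD3} is literally ``we leave the verification to the readers,'' so your proposal has to be judged on its own merits. Your organizing idea is sound and, I think, the right one: the containment ``$\supseteq$'' is immediate, $\xi(\Span J_d)$ is the $(d-1)$-dimensional space spanned by $x_0^ex_1^{d-e}$ with $e\le d-2$, your set $S'$ maps bijectively onto a basis of it, and the lemma then reduces cleanly to the spanning identity $\Span J_d=\mathrm{RHS}+\Span S'$. For $d\ge 5$ the monomial reduction you sketch does go through, although essentially all of the content is in the ``bookkeeping'' you defer: a monomial with $x_1$-exponent $d-2$, such as $x_1^{d-2}x_jx_l$ with $j,l\ge 2$, admits \emph{no} direct factorization $l\cdot m'$ with $m'\in J_{d-1}$ (any cofactor retaining $x_1^{d-2}$ leaves $J_{d-1}$), and one must first trade an $x_1$ for $x_{k_0}$ via $x_1^{d-2}x_jx_l\equiv c_{k_0}^{-1}x_{k_0}x_1^{d-3}x_jx_l$ and only then peel off the remaining variables. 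You mention the congruence $x_1\equiv c_{k_0}^{-1}x_{k_0}$ only in your third sub-case, but it is equally needed in the sub-case ``$m\in\ker(\xi)$'' when the offending exponent $d-2$ sits on $x_1$ itself; this is a fixable omission.

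The genuine problem is $d=4$. Your dismissal (``there $n=1$, so the awkward monomials do not arise'') is not justified by the hypotheses of the lemma, which allow any $n$ with $d\ge 4$; it is only true in the paper's application, where $d=2n+2$. And the awkward monomials are not merely awkward --- they are counterexamples to the lemma as stated. Take $d=4$, $n=2$, and $Z$ with $c_2\ne 0$ and $c_3=0$ (e.g.\ $Z=\{(1{:}1{:}1{:}0),(0{:}1{:}1{:}0)\}$). Then $x_2^2x_3^2\in\Span J_4\cap\ker(\xi)$, but every monomial occurring in $H^0(I_Z(1))\otimes\Span J_3$ is a linear form times a squarefree cubic and hence has at most one exponent equal to $2$, while every monomial occurring in the extra generators carries $x_0^2$; so $x_2^2x_3^2$ lies in neither summand and \eqref{COMVE034} fails. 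So no argument can close this case: the lemma needs either $d\ge 5$ or $n=1$ as an additional hypothesis (both the ``special case'' argument in the paper, which invokes this lemma with $d=2n+2$, and your proof are then fine). You should state this restriction explicitly rather than wave at it, and in the $d\ge 5$ regime you should actually carry out the two or three reduction chains above, since they are where the hypothesis $d\ge 5$ (via $2\le d-3$) is used.
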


\begin{proof}
We leave the verification of \eqref{COMVE034} to the readers.
\end{proof}

\subsection{Special case}

Let us first prove \eqref{COMVE600} when $Z$ is special for all $b$. Without loss of generality, let us assume that
$Z = \{p_1, p_2\}$ satisfies \eqref{COMVE022} and \eqref{COMVE059} for $b$ general and some $a$.

We claim that $L_\lambda: \pi_B^* T_{B,b}\to T_P\otimes \CO_Z$ factors through
a sub-sheaf $\g_Z$ of $T_P\otimes \CO_Z$, i.e.,
$L_\lambda\in \Hom(\pi_B^* T_{B,b}, \g_Z)$ for the sub-sheaf $\g_Z$ of $T_P\otimes \CO_Z$ generated by the global sections
\begin{equation}\label{COMVE061}
H^0(\g_Z) = \Span\left\{
x_i \frac{\partial}{\partial x_j}: j = 0 \text{ or } 2 \le i,j \le a
\right\}.
\end{equation}
In addition, if $x_0$ vanishes at one of $p_i$ for $b$ general,
$\g_Z$ is generated by
\begin{equation}\label{COMVE089}
H^0(\g_Z) = \Span\left\{
x_i \frac{\partial}{\partial x_j}: i = j = 0 \text{ or } 2 \le i,j \le a
\right\}.
\end{equation}

To see this, we notice that $(1,0,...,0)\not\in X_b$ for all $b$. So
\begin{equation}\label{COMVE314}
x_1(p_i)\ne 0 \text{ for } i=1,2.
\end{equation}
Otherwise, if $x_1 = 0$ at some $p\in Z$, then
$x_2 = x_3 = ... = x_{n+1} = 0$ at $p$ by \eqref{COMVE022} and $p=(1,0,...,0)$.

Thus, we may choose $\lambda$ to be given by
\begin{equation}\label{COMVE063}
\lambda \begin{bmatrix}
x_0\\
x_1\\
\vdots\\
x_{n+1}
\end{bmatrix}
=
\begin{bmatrix}
	g_1(t) & g_2(t)\\
	& 1\\
	&& A(t)\\
	&&& I_{n-a+1}
	\end{bmatrix}
\begin{bmatrix}
x_0\\
x_1\\
\vdots\\
x_{n+1}
\end{bmatrix}
\end{equation}
locally at $b$, for some $g_1(t)$, $g_2(t)$ and $A(t)$ satisfying $g_1(b) = 1$, $g_2(b) = 0$ and $A(b) = I_{a-1}$,
where $I_m$ is the $m\times m$ identity matrix.
Then by \eqref{COMVE090},
\begin{equation}\label{COMVE062}
L_\lambda\left(
\tau
\right) \in H^0(\g_Z)
\end{equation}
for all $\tau\in T_{B,b}$ with $\g_Z$ generated by \eqref{COMVE061}.

When $x_0$ vanishes at one of $p_i$ for $b$ general, $g_2(t) \equiv 0$ in \eqref{COMVE063} and thus we have \eqref{COMVE089}.
This proves our claim that $L_\lambda$ factors through $\g_Z$ given by
\eqref{COMVE061} or \eqref{COMVE089}.

Let $\Lambda\subset P$ be the line joining $p_1$ and $p_2$. Then
the map $\xi$ in \eqref{COMVE527} is simply the restriction to $\Lambda$
as in
\begin{equation}\label{COMVE010}
\begin{tikzcd}
H^0(\CO_P(m)) \ar{r}{\xi} \ar{dr}[below]{\xi} &
H^0(\CO_\Lambda(m)) \ar[equal]{d}\\
& \Sym^{m} H^0(\CO_Z(1))
\end{tikzcd}
\end{equation}
for $m\in \BN$.
We will use
$\Sym^{m} H^0(\CO_Z(1))$ and $H^0(\CO_\Lambda(m))$ interchangeably under this setting. We also use $\xi$ to denote the induced map
\begin{equation}\label{COMVE666}
\begin{tikzcd}
H^0(\CO_{X_b}(m)) \ar{r}{\xi} &
\displaystyle{\frac{H^0(\CO_\Lambda(m))}{\xi(F)\otimes H^0(\CO_\Lambda(m-d))}}
\end{tikzcd}
\end{equation}
where quotient by $\xi(F)$ is necessary; otherwise it is not well defined as $\xi(F)$ is not zero in
$H^0(\CO_\Lambda(d))$ unless $X_b$ contains the line $\Lambda$.

We further abuse the notation by using $\xi$ for the maps induced by
the restriction $H^0(\E(m))\to H^0(\Lambda, \E(m))$:
\begin{equation}\label{COMVE011}
\begin{tikzcd}[column sep=small]
H^0(\E(m)) \ar[equal]{r} \ar{dd}{\eta}
& H^0(X_b, \E(m))\ar{d} \ar{r}{\xi} & H^0(\Lambda, \E(m)) \ar{d}\\
& H^0(X_b, T_P(m)) \ar{r}{\xi} \ar{d}{\eta}
& H^0(\Lambda, T_P(m))
\ar{d}{\eta}
\\
H^0(\CO(m+d)) \ar{r} & H^0(\CO_{X_b}(m+d)) \ar{r}{\xi} &
\displaystyle{\frac{H^0(\CO_{\Lambda}(m+d))}{\xi(F)\otimes H^0(\CO_{\Lambda}(m))}}
\end{tikzcd}
\end{equation}
for $m\le d-2$, where we also abuse the notation $\eta$ by using it for three different maps, all defined by \eqref{COMVE459}.

Next, let us consider the images of the spaces $\W_{X,b}\subset H^0(X_b, \E(1))$ and $W_{X,b}\subset H^0(X_b, T_P(1))$ under $\xi$, where
$\xi(\W_{X,b})$ and $\xi(W_{X,b})$ are considered as the subspaces
of $H^0(\Lambda, \E(1))$ and
$H^0(\Lambda, T_P(1))$, respectively.

\begin{lem}\label{COMVLEMVXBSPECIAL}
Let $P = \PP^{n+1}$ and $X\subset Y = B\times P$ be the family of hypersurfaces in $P$ given by \eqref{COMVE484} over
$B = \Span J_d$ for $n\ge 2$ and $d\ge 4$.
For $b\in B$ general and \underline{all} $0$-dimensional subschemes $Z\subset X_b$ of length $2$ satisfying \eqref{COMVE022},
\begin{equation}\label{COMVE316}
\xi(\W_{X,b}) \supset \left\{
x_1^2 \frac{\partial}{\partial x_i}
\right\} \cup \left\{
x_0 x_1 \frac{\partial}{\partial x_j}: j\ge 1
\right\}
\end{equation}
and
\begin{equation}\label{COMVE052}
\xi\left(W_{X,b}\right) = H^0(\Lambda, T_P(1))
\end{equation}
if $\{x_2,...,x_{n+1}\}\not\subset H^0(I_Z(1))$ and
\begin{equation}\label{COMVE053}
\begin{aligned}
\xi\left(\W_{X,b}\right)
&= \Span \left\{
x_0 x_1 \frac{\partial}{\partial x_k} : k\ne 0,1
\right\}\\
&\hspace{36pt}
\cup \left\{
x_0^2 \frac{\partial}{\partial x_k} - c_{01k} x_0 x_1 \frac{\partial}{\partial x_0}: k\ne 0
\right\}\\
&\hspace{36pt}
\cup \left\{
x_1^2 \frac{\partial}{\partial x_k} - c_{10k} x_0 x_1 \frac{\partial}{\partial x_1}: k\ne 1
\right\}\subset H^0(\Lambda, \E(1))
\end{aligned}
\end{equation}
if $\{x_2,...,x_{n+1}\}\subset H^0(I_Z(1))$,
where $\Lambda\subset P$ is the line cutting out $Z$ on $X_b$, $\xi$ is the map defined in \eqref{COMVE011} and
$c_{ijk}$ are the numbers given by \eqref{COMVE026}.
\end{lem}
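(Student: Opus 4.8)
The plan is to pull the explicit basis $\{\omega_{ijk}\}$ of $\W_{X,b}$ from \lemref{COMVLEMVXB} through the restriction map $\xi$ to the line $\Lambda$ and read off both assertions directly. First I would fix a general point $b=(t_f)\in B$ so that every structure constant $c_{ijk}$ of \eqref{COMVE026} is nonzero, together with a finite list of further explicit nonvanishing conditions recorded below; since permuting $x_0,\dots,x_{n+1}$ is an automorphism of the family \eqref{COMVE484} carrying a general $b$ to a general $b$, for any length-$2$ scheme $Z\subset X_b$ satisfying \eqref{COMVE022} I may assume \eqref{COMVE059} holds for some $1\le a\le n+1$. The key elementary point is that, since $Z\subset\Lambda\cong\PP^1$ has length $2$, a linear form vanishes on $Z$ if and only if it vanishes identically on $\Lambda$; combined with \eqref{COMVE022} this forces $x_k|_\Lambda=\mu_k\,x_1|_\Lambda$ for all $k\ge1$, where $\mu_1=1$, $\mu_k\ne0$ for $2\le k\le a$, and $\mu_k=0$ for $k>a$. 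In particular $\{x_2,\dots,x_{n+1}\}\not\subset H^0(I_Z(1))$ is exactly the condition $a\ge2$, while the opposite inclusion is exactly $a=1$. Restriction to $\Lambda$ then acts on $H^0(X_b,\E(1))$ by the substitution $x_0\mapsto x_0$, $x_k\mapsto\mu_k x_1$ $(k\ge1)$, landing in $H^0(\Lambda,\E(1))=\bigoplus_{k=0}^{n+1}H^0(\CO_\Lambda(2))\,\partial/\partial x_k$, a space with basis $\{x_0^2,x_0x_1,x_1^2\}$ in each summand.

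In the case $a=1$ we have $x_j|_\Lambda=0$ for every $j\ge2$, so the substitution annihilates all basis vectors of $\W_{X,b}$ except $\omega_{01k}\mapsto x_0x_1\,\partial/\partial x_k$ $(k\ge2)$, $\omega_{00k}\mapsto x_0^2\,\partial/\partial x_k-c_{01k}\,x_0x_1\,\partial/\partial x_0$ $(k\ne0)$, and $\omega_{11k}\mapsto x_1^2\,\partial/\partial x_k-c_{10k}\,x_0x_1\,\partial/\partial x_1$ $(k\ne1)$. The span of these images is precisely the right-hand side of \eqref{COMVE053}, and here no genericity of $b$ is needed.

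In the case $a\ge2$ I would first establish \eqref{COMVE316}. The monomials $x_0x_1\,\partial/\partial x_j$ with $j\ge2$ come from $\omega_{01j}$, and $x_0x_1\,\partial/\partial x_1$ from $\omega_{0k1}\mapsto\mu_k\,x_0x_1\,\partial/\partial x_1$ for any $k\in\{2,\dots,a\}$. For the $x_1^2\,\partial/\partial x_i$: when $a\ge3$ one gets them directly from $\omega_{1ji}$ with $j\in\{2,\dots,a\}\setminus\{i\}$ (for $i\ne1$) and from $\omega_{jk1}$ with distinct $j,k\in\{2,\dots,a\}$ (for $i=1$), using $n\ge2$; when $a=2$ the directions $x_1^2\,\partial/\partial x_0$ and $x_1^2\,\partial/\partial x_k$ $(k\ge3)$ still come from $\omega_{120}$ and $\omega_{12k}$, while $x_1^2\,\partial/\partial x_1$ and $x_1^2\,\partial/\partial x_2$ require combining $\omega_{112}$ and $\omega_{221}$ with the elements $x_0x_1\,\partial/\partial x_1$, $x_0x_1\,\partial/\partial x_2$ just produced and then inverting a $2\times2$ matrix whose determinant is $\mu_2^2(1-c_{122}c_{211})\ne0$ for general $b$ (here $d\ge4$ is what guarantees $x_1^{d-2}x_2^2,\,x_2^{d-2}x_1^2\in J_d$). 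This gives \eqref{COMVE316}. For \eqref{COMVE052} I would then add the images $\omega_{00k}\mapsto x_0^2\,\partial/\partial x_k-\big(\textstyle\sum_{j\ge1}c_{0jk}\mu_j\big)x_0x_1\,\partial/\partial x_0$ $(k\ne0)$: working modulo the rank-two subspace spanned by the two Euler sections $\alpha\otimes x_0$ and $\alpha\otimes x_1$ (which realizes $H^0(\Lambda,T_P(1))$ as a quotient of the $3(n+2)$-dimensional $H^0(\Lambda,\E(1))$), the images $x_1^2\,\partial/\partial x_i$, $x_0x_1\,\partial/\partial x_j$ and $x_0^2\,\partial/\partial x_k$ already exhaust all $3n+4$ dimensions, so $\xi(W_{X,b})=H^0(\Lambda,T_P(1))$.

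The only real obstacle is the bookkeeping in the case $a\ge2$: showing $x_1^2\,\partial/\partial x_i\in\xi(\W_{X,b})$ for \emph{every} $i$ — in particular for the indices where $\{2,\dots,a\}$ is too small to use a single $\omega_{1ji}$ — forces one to take linear combinations of several basis vectors and to invoke the genericity of $b$ (the nonvanishing of the $c_{ijk}$ and of finitely many explicit polynomials in them such as $1-c_{122}c_{211}$). Since these are finitely many conditions, all invariant under permuting the coordinates, a single general $b$ works simultaneously for every length-$2$ subscheme $Z\subset X_b$ satisfying \eqref{COMVE022}.
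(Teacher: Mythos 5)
Your proposal is correct and follows essentially the same route as the paper's proof: restrict the explicit basis $\{\omega_{ijk}\}$ of Lemma \ref{COMVLEMVXB} to $\Lambda$ via the substitution $x_k\mapsto \mu_k x_1$ and run an elementary linear-algebra/case analysis using the genericity of the $c_{ijk}$, with the very special case \eqref{COMVE053} read off immediately. The only (immaterial) differences are that in the hardest sub-case the paper extracts $x_1^2\,\partial/\partial x_1$ and $x_1^2\,\partial/\partial x_2$ from $\omega_{113}$ and $\omega_{223}$ using $c_{123},c_{213}\ne 0$ rather than from your $2\times 2$ system in $\omega_{112},\omega_{221}$ with $1-c_{122}c_{211}\ne 0$, and that the paper deduces \eqref{COMVE052} by explicit manipulation modulo $\alpha$ rather than by your dimension count.
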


\begin{proof}
Let us first deal with the case that $\{x_2,...,x_{n+1}\}\not\subset H^0(I_Z(1))$, i.e., $Z$ is special but not very special. Note that under the hypothesis of \eqref{COMVE022}, all $x_2,...,x_{n+1}$ are multiples of $x_1$ in $H^0(\CO_\Lambda(1))$.

We write $u_1 \equiv u_2$ if $\xi(u_1 - u_2)\in \xi(\W_{X,b})$. Of course, $\omega_{ijk} \equiv 0$
for $\omega_{ijk}$ given by \eqref{COMVE514}. Under this notation,
\eqref{COMVE316} is equivalent to
\begin{equation}\label{COMVE315}
\begin{aligned}
x_1^2 \frac{\partial}{\partial x_0}
&\equiv x_1^2 \frac{\partial}{\partial x_1}
\equiv x_1^2 \frac{\partial}{\partial x_2}
\equiv ... \equiv x_1^2 \frac{\partial}{\partial x_{n+1}}
\\
&\equiv x_0 x_1 \frac{\partial}{\partial x_1}
\equiv x_0 x_1 \frac{\partial}{\partial x_2}
\equiv ... \equiv x_0 x_1 \frac{\partial}{\partial x_{n+1}} \equiv 0.
\end{aligned}
\end{equation}

Without loss of generality, let us assume that $x_2\not\in H^0(I_Z(1))$. Then $x_2 = a x_1$ in $H^0(\CO_\Lambda(1))$ for some $a\ne 0$. Therefore,
\begin{equation}\label{COMVE057}
\begin{aligned}
\omega_{01k}\equiv \omega_{12k} \equiv 0
&\Rightarrow
x_0x_1 \frac{\partial}{\partial x_k} \equiv x_1x_2 \frac{\partial}{\partial x_k}
\equiv 0
\\
&\Rightarrow x_0x_1 \frac{\partial}{\partial x_k} \equiv x_1^2 \frac{\partial}{\partial x_k}
\equiv 0
\end{aligned}
\end{equation}
for $k\ge 3$ and
\begin{equation}\label{COMVE055}
\begin{aligned}
\omega_{120} \equiv \omega_{201}
\equiv \omega_{012}
\equiv 0
&\Rightarrow x_1x_2 \frac{\partial}{\partial x_0} \equiv x_2 x_0 \frac{\partial}{\partial x_1} \equiv
x_0 x_1 \frac{\partial}{\partial x_2} \equiv 0
\\
&\Rightarrow
x_1^2 \frac{\partial}{\partial x_0} \equiv x_0 x_1 \frac{\partial}{\partial x_1} \equiv
x_0 x_1 \frac{\partial}{\partial x_2} \equiv 0.
\end{aligned}
\end{equation}

We claim that \eqref{COMVE057} holds for all $k\ge 1$, i.e.,
\begin{equation}\label{COMVE056}
\begin{aligned}
x_0x_1\frac{\partial}{\partial x_k} \equiv x_1^2 \frac{\partial}{\partial x_k} &\equiv 0 \text{ for all } k\ge 1
\text{ or equivalently}
\\
x_i x_j \frac{\partial}{\partial x_k} &\equiv 0
\text{ for all } j, k\ge 1.
\end{aligned}
\end{equation}

If $\{x_3, ..., x_{n+1}\} \not\subset H^0(I_Z(1))$, say $x_3\not\in H^0(I_Z(1))$, then
\begin{equation}\label{COMVE065}
\begin{aligned}
\omega_{231} \equiv \omega_{132} \equiv 0
&\Rightarrow
x_2x_3 \frac{\partial}{\partial x_1}\equiv x_1x_3
\frac{\partial}{\partial x_2}
\equiv 0
\\
&\Rightarrow
x_1^2 \frac{\partial}{\partial x_1}\equiv x_1^2
\frac{\partial}{\partial x_2}
\equiv 0
\end{aligned}
\end{equation}
and together with \eqref{COMVE057} and \eqref{COMVE055},
we see that \eqref{COMVE056} follows.

Otherwise, $\{x_3, ..., x_{n+1}\} \subset H^0(I_Z(1))$. Then by
\begin{equation}\label{COMVE066}
\begin{aligned}
\omega_{113}\equiv 0\Rightarrow
x_1^2 \frac{\partial}{\partial x_3}
- (c_{103} x_0 + c_{123} x_2) x_1 \frac{\partial}{\partial x_1}
&\equiv 0\\
\red{\Rightarrow}x_1^2 \frac{\partial}{\partial x_3}\equiv x_0x_1 \frac{\partial}{\partial x_1} &\equiv 0
\end{aligned}
\end{equation}
we conclude that
\begin{equation}\label{COMVE068}
x_1 x_2\frac{\partial}{\partial x_1}\equiv 0
\Rightarrow x_1^2\frac{\partial}{\partial x_1}
\equiv 0
\end{equation}
as long as $c_{123}\ne 0$, which is obvious for $b\in B$ general.
Similarly, by considering $\omega_{223}$, we obtain
\begin{equation}\label{COMVE070}
x_1^2\frac{\partial}{\partial x_2}
\equiv 0.
\end{equation}
This concludes the proof of \eqref{COMVE056},
which, combined with \eqref{COMVE055}, yields \eqref{COMVE315} and hence \eqref{COMVE316}.

Next, let us prove \eqref{COMVE052}. Note that by \eqref{COMVE303}, we have the diagram
\begin{equation}\label{COMVE317}
\begin{tikzcd}
\W_{X,b} \ar{r}{\xi} \ar[two heads]{d}& H^0(\Lambda, \E(1))\ar[two heads]{d}\\
W_{X,b} \ar{r}{\xi} & H^0(\Lambda, T_P(1))
\end{tikzcd}
\end{equation}
and hence
\begin{equation}\label{COMVE318}
\xi(W_{X,b}) = \frac{\xi(\W_{X,b})}{
\alpha \otimes H^0(\CO_\Lambda(1))}
\end{equation}
for $\alpha$ given by \eqref{COMVE009}.

Let us write $u_1 \equiv u_2\ (\text{mod } \alpha)$
if $u_1 - u_2 \in \xi(W_{X,b})$. Then \eqref{COMVE052} is equivalent to
\begin{equation}\label{COMVE319}
x_i x_j \frac{\partial}{\partial x_k}
\equiv 0\ (\text{mod } \alpha)
\end{equation}
for all $i,j,k$. Since $H^0(\CO_\Lambda(1)) = \Span\{x_0,x_1\}$, it is enough to prove \eqref{COMVE319} for $0\le i,j\le 1$.

Obviously,
\begin{equation}\label{COMVE072}
x_i\alpha \equiv 0\ (\text{mod } \alpha) \Rightarrow x_i x_0 \frac{\partial}{\partial x_0}
\equiv - x_i \sum_{j=1}^{n+1} x_j \frac{\partial}{\partial x_j}\ (\text{mod } \alpha)
\end{equation}
for all $i$. Combining \eqref{COMVE055},
\eqref{COMVE056} and \eqref{COMVE072},
we obtain
\begin{equation}\label{COMVE075}
x_0^2 \frac{\partial}{\partial x_0}
\equiv
x_0x_1 \frac{\partial}{\partial x_0}
\equiv
x_1^2 \frac{\partial}{\partial x_0}
\equiv 0\ (\text{mod } \alpha) \Rightarrow x_i x_j \frac{\partial}{\partial x_0} \equiv 0\ (\text{mod } \alpha)
\end{equation}
for all $i,j$.

Finally, by \eqref{COMVE075},
\begin{equation}\label{COMVE077}
\omega_{00k} \equiv 0
\Rightarrow
x_0^2 \frac{\partial}{\partial x_k}
- \sum_{j=1}^{n+1} c_{0jk} x_0x_j \frac{\partial}{\partial x_0}
\equiv 0\Rightarrow
x_0^2 \frac{\partial}{\partial x_k} \equiv 0
\ (\text{mod } \alpha)
\end{equation}
for all $k\ge 1$. Combining \eqref{COMVE056}, \eqref{COMVE075}
and \eqref{COMVE077}, we conclude \eqref{COMVE052}.

When $\{x_2,...,x_{n+1}\}\subset H^0(I_Z(1))$, i.e., $Z$ is very special, \eqref{COMVE053} follows directly from the fact that
$\xi(\W_{X,b}) = \Span \left\{\xi(\omega_{ijk})\right\}$.
\end{proof}

We want to call attention to the subtle difference and relation between $\xi(\W_{X,b})$ and $\xi(W_{X,b})$
in the above lemma and also Lemma \ref{COMVLEMVXBGENERIC} below. By \eqref{COMVE317}, $\xi(W_{X,b})$ is the image of $\xi(\W_{X,b})$ under
$H^0(\Lambda, \E(1))\twoheadrightarrow H^0(\Lambda, T_P(1))$.
However, $\xi(\W_{X,b})$ is not necessarily the lift of
$\xi(W_{X,b})$ in $H^0(\Lambda, \E(1))$. In particular,
when $Z$ is special but not very special,
we have \eqref{COMVE052} but it is easy to check that
$\xi(\W_{X,b})\ne H^0(\Lambda, \E(1))$.

Let us go back to the proof of \eqref{COMVE600} for $Z$ special.
Since $x_0$ and $x_1$ span $H^0(\CO_Z(1))$, we can choose $p\in Z$ such that $x_0\ne 0$ at $p$.
To prove \eqref{COMVE600}, let us consider $\omega\in \W_{X,b}$ such that $\omega(p) = 0$. Note that
$\eta(\omega)\in \Span J_{d+1}$ by the definition of $\W_{X,b}$ and $\eta(\omega)$ also vanishes at $p$. We claim that
\begin{equation}\label{COMVE076}
\eta(\omega) \in H^0(I_p(1))\otimes \Span J_d.
\end{equation}
This follows from the lemma below.

\begin{lem}\label{COMVLEMJD2}
Let $P = \PP^{n+1}$ and $J_d$ be defined in \eqref{COMVE486}
for $d\ge 3$.
Then
\begin{equation}\label{COMVE027}
\Span J_{d+1} \cap H(I_p(d+1))
= H^0(I_p(1)) \otimes \Span J_{d}
\end{equation}
for every point $p\in P$ satisfying
\begin{equation}\label{COMVE060}
p\not\in \{
(1,0,...,0), (0,1,0,...,0), ..., (0,...,0,1)
\}.
\end{equation}
Furthermore, for every $0$-dimensional subscheme $Z\subset P$ of length $2$,
a point $p\in \supp(Z)$ satisfying \eqref{COMVE060} and
$s\in H^0(I_p(1)) \backslash H^0(I_Z(1))$,
\begin{equation}\label{COMVE098}
\Span J_{d+1} \cap H\red{^0}(I_p(d+1))
= H^0(I_Z(1)) \otimes \Span J_{d} + s\otimes \Span J_d.
\end{equation}
\end{lem}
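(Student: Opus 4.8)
The plan is to deduce \eqref{COMVE098} from \eqref{COMVE027}, and to prove \eqref{COMVE027} by reducing monomials modulo the Koszul relations carried by $H^0(I_p(1))$. For the first reduction: since $Z$ has length $2$ and $\CO_P(1)$ is very ample, $H^0(\CO_P(1))\to H^0(\CO_Z(1))$ is surjective, so $h^0(I_Z(1))=(n+2)-2=n=h^0(I_p(1))-1$; as $s\in H^0(I_p(1))\setminus H^0(I_Z(1))$ this gives $H^0(I_Z(1))\oplus\BC s=H^0(I_p(1))$. Hence the image of $H^0(I_Z(1))\otimes\Span J_d + s\otimes\Span J_d$ in $H^0(\CO_P(d+1))$ equals the image of $H^0(I_p(1))\otimes\Span J_d$, so \eqref{COMVE098} is exactly \eqref{COMVE027}, and it suffices to prove the latter.

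Write $W$ for the image of $H^0(I_p(1))\otimes\Span J_d$ in $H^0(\CO_P(d+1))$. The inclusion $\supseteq$ in \eqref{COMVE027} is immediate: multiplying a monomial of $J_d$ (exponents $\le d-2$) by a coordinate gives a monomial of $J_{d+1}$ (exponents $\le d-1$), so $W\subseteq\Span J_{d+1}$, and $W\subseteq H^0(I_p(d+1))$ trivially. For $\subseteq$, permute the coordinates so that $p_0\ne 0\ne p_1$ --- possible precisely because $p$ satisfies \eqref{COMVE060} --- and fix a monomial $m_0=x_0^{a_0}x_1^{a_1}\in J_{d+1}$ (with $a_0+a_1=d+1$, $a_i\le d-1$), so that $m_0(p)\ne 0$. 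It is enough to prove
\[
\Span J_{d+1}=W+\BC m_0 ,
\]
for then any $g\in\Span J_{d+1}$ with $g(p)=0$ can be written $g=w+c\,m_0$ with $w\in W$, and evaluating at $p$ gives $c\,m_0(p)=0$, hence $c=0$ and $g\in W$.

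To obtain the displayed identity, use that for $f\in J_d$ and all $i,j$ the form $\ell_{ij}:=p_ix_j-p_jx_i$ lies in $H^0(I_p(1))$, so $\ell_{ij}f\in W$ and therefore $p_i\,x_jf\equiv p_j\,x_if\pmod W$. Now reduce an arbitrary monomial $m\in J_{d+1}$: if $x_j\mid m$ with $p_j=0$, then (taking $i=0$) $m\equiv 0\pmod W$, consistent with $m(p)=0$; otherwise transport exponents off $x_2,\dots,x_{n+1}$ onto $x_0$ and $x_1$, and then rebalance the exponents of $x_0$ and $x_1$, each elementary step being a single application of the congruence with a cofactor $f\in J_d$. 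The scalar produced is forced to equal $m(p)/m_0(p)$, so $m\equiv \tfrac{m(p)}{m_0(p)}\,m_0\pmod W$, giving $\Span J_{d+1}\subseteq W+\BC m_0$ and completing the proof.

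The main obstacle is the bookkeeping in this last reduction: a monomial of $J_{d+1}$ may carry an exponent as large as $d-1$, and dividing by the ``wrong'' coordinate then leaves a cofactor with an exponent equal to $d-1$, i.e.\ outside $J_d$, so that elementary step is unavailable. One must therefore order the moves carefully --- reducing a largest exponent first, and routing everything through $x_0$ and $x_1$ --- so that every intermediate cofactor stays in $J_d$; this is where the precise shape of $J_d$ and $J_{d+1}$ (and the lower bound on $d$) enters, and there is ample room to do so in the range $d=2n+2\ge 6$ relevant to Theorem \ref{COMVTHMHYPERSURFACE}.
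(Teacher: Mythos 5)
Your argument is essentially the paper's: the reduction of \eqref{COMVE098} to \eqref{COMVE027} via $H^0(I_p(1))=H^0(I_Z(1))\oplus \BC s$ is the same decomposition $lf=(l-cs)f+csf$ used there, and your identity $\Span J_{d+1}=W+\BC m_0$ is exactly the paper's \eqref{COMVE526}, established by the same substitution $x_k\equiv c\,x_i \pmod{H^0(I_p(1))}$ for $i=0,1$ (the paper's \eqref{COMVE078}). The bookkeeping you flag does close for $d\ge 4$ --- at most one exponent of a monomial in $J_{d+1}$ can equal $d-1$, so one divides by that variable first and routes the freed factor to whichever of $x_0,x_1$ keeps all exponents $\le d-2$ --- and the paper is no more explicit about this step than you are; your caution is warranted, though, because for $d=3$ the reduction, and indeed the identity \eqref{COMVE027} itself, can genuinely fail (already in $\PP^2$ with $p=(1,1,1)$, where $H^0(I_p(1))\otimes\Span J_3$ is $2$-dimensional while $\Span J_4\cap H^0(I_p(4))$ is $5$-dimensional), so the effective hypothesis is $d\ge 4$, which is harmless in the intended application $d=2n+2\ge 6$.
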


\begin{proof}
By \eqref{COMVE060}, there exist $i\ne j$ such that neither $x_i$ nor $x_j$ vanishes at $p$. Without loss
of generality, let us assume that $x_0\ne 0$ and $x_1 \ne 0$ at $p$.

It is obvious that
\begin{equation}\label{COMVE029}
\begin{aligned}
\Span J_{d+1} \cap H(I_p(d+1))
&\supset H^0(I_p(1)) \otimes \Span J_{d}
\text{ and}\\
\dim\left(\Span J_{d+1} \cap H(I_p(d+1))\right)
&= \dim \Span J_{d+1} - 1.
\end{aligned}
\end{equation}
Therefore, to show \eqref{COMVE027}, it suffices to show that
\begin{equation}\label{COMVE526}
\Span J_{d+1} = H^0(I_p(1)) \otimes \Span J_{d} + \Span
\left\{
x_0^2 x_1^{d-1}
\right\}
\end{equation}
which follows from the fact that
\begin{equation}\label{COMVE078}
x_k\in H^0(I_p(1)) + \Span\left\{x_0\right\}
\text{ and }
x_k\in H^0(I_p(1)) + \Span\left\{x_1\right\}
\end{equation}
for all $k$.

To see \eqref{COMVE098}, we observe that for all $l\in H^0(I_p(1))$ and $f\in \Span J_d$, $lf$ can be written as
\begin{equation}\label{COMVE099}
lf = (l-cs)f + csf \in H^0(I_Z(1)) \otimes \Span J_{d} + s\otimes \Span J_d,
\end{equation}
where $c$ is a constant such that $l-cs\in H^0(I_Z(1))$.
\end{proof}

Note that by \eqref{COMVE484}, $p\in Z$ always satisfies \eqref{COMVE060}.

Suppose that $a = 1$ in \eqref{COMVE059}, i.e., $Z$ is very special.
By Lemma \ref{COMVLEMVXBSPECIAL},
\begin{equation}\label{COMVE081}
\begin{aligned}
&\quad\left\{\begin{aligned}
x_0^2 \frac{\partial}{\partial x_2} - c_{012} x_0 x_1 \frac{\partial}{\partial x_0} &\in \xi(\W_{X,b})\\
x_0^2 \frac{\partial}{\partial x_3} - c_{013} x_0 x_1 \frac{\partial}{\partial x_0} &\in \xi(\W_{X,b})
\end{aligned}\right.
\\
&
\Rightarrow c_{013} x_0^2 \frac{\partial}{\partial x_2} - c_{012} x_0^2 \frac{\partial}{\partial x_3}
\in \xi(\W_{X,b})
\end{aligned}
\end{equation}
Similarly,
\begin{equation}\label{COMVE800}
\begin{aligned}
&\quad\left\{\begin{aligned}
x_1^2 \frac{\partial}{\partial x_2} - c_{102} x_0 x_1 \frac{\partial}{\partial x_1} &\in \xi(\W_{X,b})\\
x_1^2 \frac{\partial}{\partial x_3} - c_{103} x_0 x_1 \frac{\partial}{\partial x_1} &\in \xi(\W_{X,b})
\end{aligned}\right.
\\
&
\Rightarrow c_{103} x_1^2 \frac{\partial}{\partial x_2} - c_{102} x_1^2 \frac{\partial}{\partial x_3}
\in \xi(\W_{X,b})
\end{aligned}
\end{equation}
and hence
\begin{equation}\label{COMVE801}
(c_{013} x_0^2 + c_{103} x_1^2)\frac{\partial}{\partial x_2} -
(c_{012} x_0^2 + c_{102} x_1^2) \frac{\partial}{\partial x_3}
\in \xi(\W_{X,b}).
\end{equation}
Since $x_2=...=x_{n+1}=0$ at $p\ne (1,0,...,0), (0,1,0,...,0)$, neither $x_0$ nor $x_1$ vanishes at $p$.
Hence there exist numbers $r_k$ such that
$c_{01k} x_0^2 + c_{10k} x_1^2 + r_k x_0 x_1$ vanishes at $p$
for $k=2,3$. For $b$ general, \red{by \ref{COMVE026} }the
numbers $c_{ijk}$ are general with the only relations $c_{ijk} = c_{ikj}$. In particular,
\begin{equation}\label{COMVE082}
\det \begin{bmatrix}
c_{012} & c_{102}\\
c_{013} & c_{103}
\end{bmatrix} \ne 0.
\end{equation}
Therefore, at least one of $c_{012} x_0^2 + c_{102} x_1^2 + r_2 x_0 x_1$
and $c_{013} x_0^2 + c_{103} x_1^2 + r_3 x_0 x_1$ does not vanish on $Z$.
Consequently,
\begin{equation}\label{COMVE083}
\begin{aligned}
&\quad (c_{013} x_0^2 + c_{103} x_1^2 + r_3 x_0 x_1)\frac{\partial}{\partial x_2}\\
&-
(c_{012} x_0^2 + c_{102} x_1^2 + r_2 x_0 x_1) \frac{\partial}{\partial x_3}
\in \xi(\W_{X,b})
\end{aligned}
\end{equation}
vanishes at $p$ but not on $Z$.
So we may choose $\omega\in \W_{X,b}$ such that
\begin{equation}\label{COMVE084}
\begin{aligned}
\xi(\omega) &= (c_{013} x_0^2 + c_{103} x_1^2 + r_3 x_0 x_1)\frac{\partial}{\partial x_2}\\
&\quad -
(c_{012} x_0^2 + c_{102} x_1^2 + r_2 x_0 x_1) \frac{\partial}{\partial x_3},\\
\omega(p) &= 0 \text{ and } \omega\Big|_Z \ne 0.
\end{aligned}
\end{equation}
Let us write
\begin{equation}\label{COMVE085}
\xi(\omega) = s_1 \left(s_2 \frac{\partial}{\partial x_2}
+ s_3 \frac{\partial}{\partial x_3}\right)
\end{equation}
with $s_i\in H^0(\CO_P(1))$ satisfying $s_1(p) = 0$
and either $s_1s_2 \ne 0$ or $s_1s_3\ne 0$ on $Z$.

Since $\omega(p) = 0$, $\tau = \eta(\omega)$ vanishes at $p$ as well. So by Lemma \ref{COMVLEMJD2},
$\tau\in H^0(I_p(1)) \otimes \Span J_{d}$.
When we regard $\tau$ as a vector in $H^0(I_p(1)) \otimes T_{B,b}$,
we have
\begin{equation}\label{COMVE080}
L_\lambda(\tau) = s_1 \gamma
\end{equation}
for some
\begin{equation}\label{COMVE086}
\gamma\in H^0(\g_Z) = \Span \left\{
x_0 \frac{\partial}{\partial x_0}, x_1 \frac{\partial}{\partial x_0}
\right\}
\end{equation}
by \eqref{COMVE061} \red{with $a=1$}. Then
\begin{equation}\label{COMVE087}
\red{\omega|_Z} - L_\lambda(\tau) = s_1\left(
s_2 \frac{\partial}{\partial x_2} + s_3 \frac{\partial}{\partial x_3} - \gamma
\right)\in W_{X,b,Z,\lambda}.
\end{equation}
Obviously, $\red{\omega|_Z} - L_\lambda(\tau)$ vanishes at $p$. But since one of $s_1s_2$ and $s_1s_3$ does not vanish on $Z$ and $\gamma$
lies in the subspace \eqref{COMVE086} of $H^0(Z, T_P)$, it is easy to see that $\omega - L_\lambda(\tau)$
does not vanish in $H^0(Z, T_P(1))$. This finishes the proof for \eqref{COMVE600} when $Z$ is very special.

Suppose that $2\le a\le n$ in \eqref{COMVE059}. Then by
\eqref{COMVE052}, $\xi$ maps $W_{X,b}$
surjectively onto $H^0(\Lambda, T_P(1))$. So we can choose $\omega\in \W_{X,b}$ such that
\begin{equation}\label{COMVE079}
\xi(\omega) = s x_1 \frac{\partial}{\partial x_{n+1}}
\end{equation}
in $H^0(\Lambda, T_P(1))$ for some $s\in H^0(I_p(1))\backslash H^0(I_Z(1))$.
Note that $x_1$ does not vanish on either $p_i\in Z$, as explained for \eqref{COMVE314}.

By the same argument as before, we have
\begin{equation}\label{COMVE088}
\omega - L_\lambda(\tau) = s\left(
x_1 \frac{\partial}{\partial x_{n+1}} - \gamma
\right)\in W_{X,b,Z,\lambda}
\end{equation}
for some $\gamma\in H^0(\g_Z)$. Again, $\omega - L_\lambda(\tau)$ vanishes at $p$ and does not vanish in
$H^0(Z, T_P(1))$ for $a\le n$ by \eqref{COMVE061}.
This finishes the proof for \eqref{COMVE600} when $a\le n$.

Suppose that $a\ge 2$ and $x_0$ vanishes at one of $p_i$ for $b$ general. Then we choose $\omega\in \W_{X,b}$ such that
\begin{equation}\label{COMVE092}
\xi(\omega) = s x_1 \frac{\partial}{\partial x_0}
\end{equation}
in $H^0(\Lambda, T_P(1))$ for some $s\in H^0(I_p(1))\backslash H^0(I_Z(1))$.

By the same argument as before, we have
\begin{equation}\label{COMVE093}
\omega - L_\lambda(\tau) = s\left(
x_1 \frac{\partial}{\partial x_0} - \gamma
\right)\in W_{X,b,Z,\lambda}
\end{equation}
for some $\gamma\in H^0(\g_Z)$. Again, $\omega - L_\lambda(\tau)$ vanishes at $p$. Note that we choose $p$ such that
$x_0\ne 0$ at $p$. So $x_0$ must vanish at $Z\backslash \{p\}$. By \eqref{COMVE089},
\begin{equation}\label{COMVE094}
\gamma\in H^0(\g_Z) = \Span \left\{
x_0 \frac{\partial}{\partial x_0},
x_1 \frac{\partial}{\partial x_2}, ...,
x_1 \frac{\partial}{\partial x_a}
\right\}.
\end{equation}
It follows that $\omega - L_\lambda(\tau) \ne 0$ in $H^0(Z, T_P(1))$.
This finishes the proof for \eqref{COMVE600} when $a\ge 2$ and
$x_0$ vanishes at one of $p_i$.

It remains to verify \eqref{COMVE600} when $a = n+1$ in \eqref{COMVE059} and $x_0\ne 0$ at both $p_i$. In this case,
\begin{equation}\label{COMVE557}
\begin{aligned}
H^0(\g_Z) &= \Span \left\{
x_i \frac{\partial}{\partial x_j}: j = 0 \text{ or } 2 \le i,j \le n+1
\right\}
\\
&= \Span \left\{
x_1 \frac{\partial}{\partial x_j}: j = 0, 1,...,n+1
\right\}
\end{aligned}
\end{equation}
by \eqref{COMVE061}\magenta{, where the term
$x_1 (\partial/\partial x_1)$ comes from using the relation $\alpha = 0$ given by the Euler
vector field for $\alpha$ defined by \eqref{COMVE009}}.

Let us choose $s_1 = x_0 - r_1 x_1$ and $s_2 = x_0 - r_2 x_1$ for some constants $r_i$ such that
$s_i(p_i) \ne 0$ and $s_i(p_{3-i}) = 0$ for $i=1,2$. Clearly, $r_1\ne r_2\ne 0$.

Fixing $1\le k \le n+1$, we let
\begin{equation}\label{COMVE320}
u_k = x_0 \frac{\partial}{\partial x_k}
- \sum_{j=1}^{n+1} c_{0jk} x_j \frac{\partial}{\partial x_0}.
\end{equation}
Since $\omega_{00k} = x_0 u_k$, $\xi(x_0u_k)\in \xi(\W_{X,b})$. And by \eqref{COMVE316}, $\xi(x_1u_k)\in \xi(\W_{X,b})$. Therefore,
$\xi(su_k)\in \xi(\W_{X,b})$ for all $s\in H^0(\CO_P(1))$. In particular,
there exist $w_{ik}\in \W_{X,b}$ such that
\begin{equation}\label{COMVE095}
w_{ik}\Big|_\Lambda = s_i u_k\Big|_\Lambda
\end{equation}
in $H^0(\Lambda, \E(1))$ for $i=1,2$. Then by Lemma \ref{COMVLEMJD2},
\begin{equation}\label{COMVE096}
\eta(w_{ik}) - s_i \gamma_{ik} \in H^0(I_Z(1)) \otimes \Span J_{d}
\end{equation}
for some $\gamma_{ik} \in \Span J_d$ and $i=1,2$. We may write
\begin{equation}\label{COMVE321}
\eta(w_{ik}) - s_i \gamma_{ik} = \sum l_j \tau_j
\end{equation}
with $l_j\in H^0(\CO_P(1))$ and $\tau_j\in
H^0(I_Z(1)) \otimes \Span J_{d-1}$. Then
\begin{equation}\label{COMVE018}
w_{ik} - s_i L_\lambda(\gamma_{ik})
- \sum l_j L_\lambda(\tau_j)
=
s_i \left( u_k - L_\lambda\left(\gamma_{ik} \right) \right)\in W_{X,b,Z,\lambda}.
\end{equation}
when restricted to $Z$,
since $L_\lambda$ vanishes on $H^0(I_Z(1)) \otimes \Span J_{d-1}$.

By the same argument as before, we conclude that
\begin{equation}\label{COMVE097}
\left(u_k - L_\lambda\left(\gamma_{ik} \right)\right)\Big|_{p_i} = 0
\end{equation}
for $i=1,2$; otherwise, \eqref{COMVE600} follows.
By our choice of $s_i$ and $r_i$, we see that
\begin{equation}\label{COMVE019}
L_\lambda\left(\gamma_{ik} \right) = r_{3-i} x_1 \frac{\partial}{\partial x_k}
- \sum_{j=1}^{n+1} c_{0jk} x_j \frac{\partial}{\partial x_0}
\end{equation}
for $i=1,2$. In particular,
\begin{equation}\label{COMVE017}
L_\lambda\left(\gamma_{1k} - \gamma_{2k} \right) = (r_2 - r_1) x_1 \frac{\partial}{\partial x_k} \ne 0.
\end{equation}
So $x_1 (\partial /\partial x_k)$ lies in the image of
$L_\lambda$ for all $k=1,2,...,n+1$.

By \eqref{COMVE096}, $\eta(w_{ik}) - s_i \gamma_{ik} = 0$ in $H^0(\CO_\Lambda(d+1))$ and hence
\begin{equation}\label{COMVE466}
\begin{aligned}
&\quad \xi(s_i \gamma_{ik})  =  \xi(\eta(w_{ik})) = \xi(\eta(s_i u_k))
= \xi(s_i \eta(u_k))
\\
&\Rightarrow s_i (\gamma_{ik}-\eta(u_k))\Big|_\Lambda
= 0
\Rightarrow (\gamma_{ik}-\eta(u_k))\Big|_\Lambda = 0
\\
&\Rightarrow \xi(\gamma_{ik}) = \xi(\eta(u_k))
\end{aligned}
\end{equation}
for $i=1,2$.
Therefore, $\xi(\gamma_{1k} - \gamma_{2k}) = 0$ and hence
\begin{equation}\label{COMVE325}
\gamma_{1k} - \gamma_{2k} \in \Span J_{d} \cap \ker(\xi).
\end{equation}
Combining \eqref{COMVE017} and \eqref{COMVE325}, we conclude that
\begin{equation}\label{COMVE322}
\begin{aligned}
&\text{for each } 1\le k\le n+1,
\text{ there exists } \gamma_k \in \Span J_{d} \cap \ker(\xi)\\
&\hspace{132pt}\text{such that }
L_\lambda(\gamma_k) = x_1 \frac{\partial}{\partial x_k}.
\end{aligned}
\end{equation}
On the other hand, we know that
\begin{equation}\label{COMVE487}
\Span J_{d} \cap \ker(\xi) = H^0(I_Z(1)) \otimes \Span J_{d-1} + V
\end{equation}
by \eqref{COMVE034} in Lemma \ref{COMVLEMJD3} for
\begin{equation}\label{COMVE030}
\begin{aligned}
V &= \Span \Big\{
x_0^{d-2}  x_i (x_j - c_j x_1): i\ge 1, j\ge 2 \text{ and}\\
&\hspace{138pt} x_j - c_j x_1\in H^0(I_Z(1))
\Big\}.
\end{aligned}
\end{equation}
And since $L_\lambda$ vanishes on
$H^0(I_Z(1)) \otimes \Span J_{d-1}$, \eqref{COMVE322} is equivalent to saying that
\begin{equation}\label{COMVE323}
\left\{
x_1 \frac{\partial}{\partial x_k}: k\ge 1
\right\}\subset L_\lambda(V).
\end{equation}

Note that for $x_0^{d-2}  x_i (x_j -
c_j x_1)\in V$,
\begin{equation}\label{COMVE508}
\eta\left(
x_1 \otimes x_0^{d-2}  x_i (x_j - c_j x_1)
- (x_j - c_j x_1) \otimes x_0^{d-2}  x_1 x_i
\right) = 0
\end{equation}
and hence
\begin{equation}\label{COMVE100}
\begin{aligned}
&\quad L_\lambda\left(
x_1 \otimes x_0^{d-2}  x_i (x_j - c_j x_1)
- (x_j - c_j x_1) \otimes x_0^{d-2}  x_1 x_i
\right)\\
&= x_1 L_\lambda \left(x_0^{d-2}  x_i (x_j - c_j x_1)\right)
- (x_j - c_j x_1) L_\lambda\left( x_0^{d-2}  x_1 x_i
\right)\\
&= x_1 L_\lambda \left(x_0^{d-2}  x_i (x_j - c_j x_1)\right) \in W_{X,b,Z,\lambda}.
\end{aligned}
\end{equation}
It follows that $x_1 L_\lambda(\gamma) \in W_{X,b,Z,\lambda}$ for all $\gamma\in V$.
Consequently,
\begin{equation}\label{COMVE102}
\Span\left\{x_1^2 \frac{\partial}{\partial x_k}: k\ge 1\right\}
\subset W_{X,b,Z,\lambda}
\end{equation}
by \eqref{COMVE323}.

It remains to find $u\in H^0(\Lambda, \E)$ satisfying
\begin{equation}\label{COMVE103}
u \in \Span\left\{x_1 \frac{\partial}{\partial x_k}: k\ge 1\right\},\ u\ne 0 \text{ and } x_0 u\in W_{X,b,Z,\lambda}.
\end{equation}
If such $u$ exists, $u\ne 0$ at both $p_i$. Then combining \eqref{COMVE102} and \eqref{COMVE103}, we see
that $(x_0-r_1x_1) u\in W_{X,b,Z,\lambda}$ vanishes at $p_2$ but not $p_1$.

To construct $u$ satisfying \eqref{COMVE103}, let us consider
\begin{equation}\label{COMVE109}
\begin{aligned}
\omega &= c_{013}\left(\omega_{012} - \sum_{j=2}^{n+1} c_{02j}
\omega_{1j0}\right) - c_{012}\left(\omega_{013} - \sum_{j=2}^{n+1} c_{03j} \omega_{1j0}\right)\\
&= c_{013}\left(x_0x_1 \frac{\partial}{\partial x_2} - \sum_{j=2}^{n+1} c_{02j}
x_1 x_j \frac{\partial}{\partial x_0} \right)\\
&\quad - c_{012}\left(x_0x_1 \frac{\partial}{\partial x_3} - \sum_{j=2}^{n+1} c_{03j} x_1 x_j \frac{\partial}{\partial x_0}\right)
\end{aligned}
\end{equation}
in $\W_{X,b}$. We choose $\omega$ in such a way that
the expansion of $\eta(\omega)$ does not contain monomials in $J_{d+1}$ of degree $d-1$ in $x_0$. Thus, we can write
\begin{equation}\label{COMVE110}
\eta(\omega) = \sum_{i=1}^{n+1} x_i \tau_i
\end{equation}
for some $\tau_i\in \Span J_d$. Therefore, by the definition \eqref{COMVE491} of $W_{X,b,Z,\lambda}$,
\begin{equation}\label{COMVE112}
\omega - \sum_{i=1}^{n+1} x_i L_\lambda (\tau_i) \in W_{X,b,Z,\lambda}
\end{equation}
when restricted to $Z$. Combining it with \eqref{COMVE557} and \eqref{COMVE102}, we conclude
\begin{equation}\label{COMVE113}
x_0\left(c_{013} x_1 \frac{\partial}{\partial x_2} - c_{012} x_1 \frac{\partial}{\partial x_3}\right) -
\beta_1 x_1^2\frac{\partial}{\partial x_0}\in W_{X,b,Z,\lambda}
\end{equation}
for some constant $\beta_1$.
Similarly, we have
\begin{equation}\label{COMVE477}
x_0\left(c_{023} x_2 \frac{\partial}{\partial x_1} - c_{021} x_2 \frac{\partial}{\partial x_3}\right) -
\beta_2 x_1^2\frac{\partial}{\partial x_0}\in W_{X,b,Z,\lambda}
\end{equation}
for some constant $\beta_2$ by switching $x_1$ and $x_2$.
Hence by \eqref{COMVE113} and \eqref{COMVE477},
\begin{equation}\label{COMVE101}
\begin{aligned}
&x_0\Big(
e_1 c_{013} x_1 \frac{\partial}{\partial x_2} + e_2 c_{023} x_2 \frac{\partial}{\partial x_1}
\\
&\quad\quad - (e_1 c_{012} x_1 + e_2 c_{021} x_2) \frac{\partial}{\partial x_3}
\Big)\in W_{X,b,Z,\lambda}
\end{aligned}
\end{equation}
for constants $e_1$ and $e_2$, not all zero, satisfying $e_1\beta_1 + e_2\beta_2 = 0$.

For $b\in B$ general, $c_{013} c_{023}\ne 0$ and hence $e_1 c_{013}$ and $e_2 c_{023}$
cannot both vanish. Therefore,
\begin{equation}\label{COMVE324}
u = e_1 c_{013} x_1 \frac{\partial}{\partial x_2} + e_2 c_{023} x_2 \frac{\partial}{\partial x_1}
- (e_1 c_{012} x_1 + e_2 c_{021} x_2) \frac{\partial}{\partial x_3}
\end{equation}
satisfies \eqref{COMVE103}.

This finishes the proof of \eqref{COMVE600} for $Z$ special. Thus, if $Z = \{\sigma_1(b),\sigma_2(b)\}$ is special with respect to $(x_i)$ for all $b\in B$, then
$\sigma_1(b)$ and $\sigma_2(b)$ are not $\Gamma$-equivalent over $\BQ$ on $X_b$ for $b\in B$ general.

\subsection{Generic case}

Next we will finish the proof of our main theorem by proving \eqref{COMVE600} for $Z$ generic.
We start with a result on $\xi(\W_{X,b})$ for $Z$ generic, similar to Lemma \ref{COMVLEMVXBSPECIAL}.

\begin{lem}\label{COMVLEMVXBGENERIC}
Let $P = \PP^{n+1}$ and $X\subset Y = B\times P$ be the family of hypersurfaces in $P$ given by \eqref{COMVE484}
over $B = \Span J_d$ for $n\ge 2$ and $d\ge 4$. Then $\xi$ is surjective when restricted to $\W_{X,b}$, i.e.,
\begin{equation}\label{COMVE016}
\xi(\W_{X,b}) = H^0(\Lambda, \E(1))
\end{equation}
for $b\in B$ general and \underline{all} $0$-dimensional subschemes $Z\subset X_b$ of length $2$ that are generic
with respect to $(x_i)$, where $\Lambda\subset P$ is the line \magenta{cutting out $Z$ }on $X_b$ and $\xi$ is the restriction
$H^0(\E(1))\to H^0(\Lambda, \E(1))$.
\end{lem}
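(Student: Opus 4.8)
The plan is to follow the same strategy as the proof of Lemma \ref{COMVLEMVXBSPECIAL}, working modulo $\xi(\W_{X,b})$ inside $H^0(\Lambda,\E(1))$ and exploiting the genericity of $Z$ to get extra relations that were unavailable in the special case. Since $Z$ is generic, for \emph{every} index $i$ we have $H^0(\CO_Z(1)) = \Span\{x_j : j\ne i\}$, so every $x_k$ can be expressed (on $\Lambda$, i.e., in $H^0(\CO_\Lambda(1))=H^0(\CO_Z(1))$) as a linear combination of any two of the remaining coordinates. In particular, after a linear change among $x_2,\dots,x_{n+1}$ we may assume $H^0(\CO_\Lambda(1)) = \Span\{x_0, x_1\}$ and, by genericity, also $H^0(\CO_\Lambda(1))=\Span\{x_0,x_2\}=\Span\{x_1,x_2\}=\Span\{x_0,x_i\}=\Span\{x_1,x_i\}$ for a suitable $i\ge 2$; we will use these multiple presentations freely. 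Write $u_1\equiv u_2$ to mean $\xi(u_1-u_2)\in\xi(\W_{X,b})$, so that $\omega_{ijk}\equiv 0$ for all admissible $(i,j,k)$, where $\omega_{ijk}$ is as in \eqref{COMVE514}.

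First I would dispose of the ``mixed'' generators $x_a x_b\,\partial/\partial x_k$ with $k\ne a,b$: these are literally the classes of the $\omega_{abk}$ (for $a\ne b$), hence $\equiv 0$ directly, and by rewriting one coordinate in terms of two others via genericity one deduces that $x_ix_j\,\partial/\partial x_k\equiv 0$ whenever $\{i,j\}\not\ni k$ — this already shows that the span of $\xi(\W_{X,b})$ contains $x_0x_1\,\partial/\partial x_k$ for all $k$ and $x_\ell^2\,\partial/\partial x_k$ for $k\ne\ell$, by a chain of substitutions exactly like \eqref{COMVE057}, \eqref{COMVE065}. The second and decisive step is to handle $x_0^2\,\partial/\partial x_0$, $x_1^2\,\partial/\partial x_1$, $x_0x_1\,\partial/\partial x_0$, $x_0x_1\,\partial/\partial x_1$: here one uses the $\omega_{iik}$ generators, whose $\xi$-images are $x_i^2\,\partial/\partial x_k - \sum_{j\ne i}c_{ijk}\,x_ix_j\,\partial/\partial x_i$, together with the relations from the previous step to kill the correction terms $c_{ijk}x_ix_j\,\partial/\partial x_i$. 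Crucially, unlike in Lemma \ref{COMVLEMVXBSPECIAL} we are \emph{not} passing to $W_{X,b}=\W_{X,b}/(\alpha\otimes H^0(\CO(1)))$, so there is no $\alpha$ to quotient by, and to separate $\partial/\partial x_0$ from the other $\partial/\partial x_j$ we must instead leverage genericity: because $H^0(\CO_\Lambda(1))$ is spanned by $\{x_0,x_i\}$ for $i\ge2$ as well, the generator $\omega_{iik}$ with $i\ge 2$ gives a relation among $x_0^2\,\partial/\partial x_k$-type terms that is independent of the one coming from $\omega_{00k}$, and for $b$ general the relevant determinant of $c$-coefficients (as in \eqref{COMVE082}) is nonzero, so the two relations together force each remaining monomial vector into $\xi(\W_{X,b})$. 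Since $\{x_ax_b\,\partial/\partial x_k : 0\le a\le b\le 1,\ \text{or}\ (a,b,k)\ \text{admissible}\}$ spans $H^0(\Lambda,\E(1))$ once $H^0(\CO_\Lambda(1))=\Span\{x_0,x_1\}$, this yields \eqref{COMVE016}.

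The main obstacle will be the bookkeeping in the second step: pinning down exactly which linear combinations of the $\xi(\omega_{iik})$ (over the various choices of spanning pair realized by genericity) are needed to isolate each of the six ``hard'' monomial vectors, and verifying that the corresponding coefficient matrix in the $c_{ijk}$ is invertible for $b\in B$ general. This is a finite linear-algebra computation over the function field of $B$, and the genericity hypothesis on $Z$ is precisely what guarantees enough independent relations; I would organize it by first proving $x_ix_j\,\partial/\partial x_k\equiv 0$ for all $j,k\ge 1$ (the analogue of \eqref{COMVE056}, now easier because genericity removes the obstruction that forced the case split in Lemma \ref{COMVLEMVXBSPECIAL}), then deriving $x_ix_j\,\partial/\partial x_0\equiv 0$ from the $\omega_{iik}$ with $k\ne 0$ and the already-established relations, and finally noting that these together exhaust a spanning set of $H^0(\Lambda,\E(1))$.
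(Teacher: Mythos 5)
Your overall strategy is the paper's: work modulo $\xi(\W_{X,b})$ with the basis $\{\omega_{ijk}\}$ of \eqref{COMVE514}, use the genericity of $Z$ to substitute one coordinate by a spanning pair of others, and finish the remaining monomial vectors by a linear-algebra argument in the coefficients $c_{ijk}$, which are general for $b\in B$ general. But your first step claims too much, and the claim that ``genericity removes the obstruction that forced the case split'' is false. You assert that $x_ix_j\,\partial/\partial x_k\equiv 0$ for all $\{i,j\}\not\ni k$ (in particular $x_\ell^2\,\partial/\partial x_k\equiv 0$ for all $\ell\ne k$) follows from the mixed generators $\omega_{abk}$, $a\ne b\ne k$, by rewriting coordinates. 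To get $x_\ell^2\,\partial/\partial x_k$ this way you need two coordinates $x_c,x_d$ with $c,d\notin\{\ell,k\}$ spanning $H^0(\CO_Z(1))$, and genericity does \emph{not} guarantee this: take $n=2$ and a generic $Z$ with $x_2\equiv x_0$ and $x_3\equiv x_1$ in $H^0(\CO_Z(1))$ (this satisfies \eqref{COMVE523} for every $i$). Then $\{x_j: j\ne 0,2\}=\{x_1,x_3\}$ spans only a line, the available products $x_0x_1,\,x_0x_3,\,x_1x_3$ restrict to $x_0x_1,\,x_0x_1,\,x_1^2$ on $\Lambda$, and $x_0^2\,\partial/\partial x_2$ is simply not reachable from the $\omega_{abk}$ with $a\ne b$. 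This configuration is exactly the second case \eqref{COMVE045} of the paper's proof, where $\{x_2,\dots,x_{n+1}\}\subset\Span\{x_0\}\cup\Span\{x_1\}$; it is why the paper splits into the two cases \eqref{COMVE033} and \eqref{COMVE044}, each with further sub-cases according to whether the remaining coordinates vanish on $Z$.

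Consequently the set of ``hard'' monomials is not just the six $x_ax_b\,\partial/\partial x_k$ with $a,b,k\in\{0,1\}$: in configurations like the one above, terms such as $x_0^2\,\partial/\partial x_2$ must also be extracted from the $\omega_{iik}$ generators, whose correction terms $\sum_j c_{ijk}x_ix_j\,\partial/\partial x_i$ are themselves entangled with other unresolved monomials (e.g.\ $c_{022}x_0x_2\,\partial/\partial x_0=c_{022}x_0^2\,\partial/\partial x_0$ on $\Lambda$). Your second step, invoking an invertible determinant of $c$-coefficients, is the right idea for resolving this coupled system --- the paper does precisely that with the systems \eqref{COMVE042}/\eqref{COMVE047} and \eqref{COMVE307} --- but to make it a proof you must first enumerate the possible restriction patterns of $(x_0,\dots,x_{n+1})$ to $Z$ compatible with genericity and, in each, write down the full coupled system and verify nondegeneracy for general $c_{ijk}$. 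As it stands, the proposal skips the case analysis that is the actual content of the lemma.
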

To keep our argument in sight, we postpone its proof till the end of the subsection.

By the isomorphism \eqref{COMVE028}, $L_\lambda$ actually induces a map
\begin{equation}\label{COMVE106}
\begin{tikzcd}
\displaystyle{\frac{\Span J_d}{H^0(I_Z(1)) \otimes \Span J_{d-1}}} \ar{r}[above]{L_\lambda} \ar{d}[left]{\xi}[right]{\cong} &
H^0(Z,T_P)\\
\Sym^d H^0(\CO_Z(1)) \ar[equal]{r} & H^0(\CO_{\Lambda}(d)) \ar{u}{L_\lambda}
\end{tikzcd}
\end{equation}
As before, we choose $s_i\in H^0(\CO_P(1))$ such that
$s_i(p_i)\ne 0$ and $s_i(p_{3-i}) = 0$ for $i=1,2$.
For every $u\in H^0(\E)$, by Lemma \ref{COMVLEMVXBGENERIC}, there exist
$\omega_i\in \W_{X,b}$ such that
\begin{equation}\label{COMVE031}
\xi(\omega_i) = \xi(s_i u)
\end{equation}
in $H^0(\Lambda, \E(1))$ for $i=1,2$. Then as \eqref{COMVE096}, \red{by Lemma \ref{COMVLEMJD2}} we have
\begin{equation}\label{COMVE114}
\eta(\omega_i) - s_i \gamma_i \in H^0(I_Z(1)) \otimes \Span J_{d}
\end{equation}
for some $\gamma_i \in \Span J_d$. It follows that
\begin{equation}\label{COMVE115}
s_i\left(u - L_\lambda(\gamma_i) \right) \in W_{X,b,Z,\lambda}
\end{equation}
for $i=1,2$, when restricted to $Z$. As before, we must have
\begin{equation}\label{COMVE116}
\left(u - L_\lambda(\gamma_i)\right) \Big|_{p_i} = 0
\end{equation}
for $i=1,2$; otherwise, \eqref{COMVE600} follows.

By \eqref{COMVE114}\magenta{\ and $H^0(\Lambda, I_Z(1)) = 0$}, $\xi\left(\eta(\omega_i) - s_i \gamma_i\right) = 0$ and hence
\begin{equation}\label{COMVE117}
\begin{aligned}
&\quad \xi(s_i \gamma_i) = \xi \left(\eta(\omega_i)\right) =
\xi \left(\eta(s_i u)\right)
= \xi \left(s_i \eta(u)\right)
\\
&\Rightarrow
s_i(\gamma_i - \eta(u))\Big|_\Lambda = 0
\Rightarrow (\gamma_i - \eta(u))\Big|_\Lambda = 0
\Rightarrow \xi(\gamma_i) = \xi(\eta(u))
\end{aligned}
\end{equation}
for $i=1,2$. Then $\xi(\gamma_1) = \xi(\gamma_2)$
and $\gamma_1 - \gamma_2 \in H^0(I_Z(1)) \otimes \Span J_{d-1}$
by Lemma \ref{COMVLEMJD}. Therefore, $L_\lambda(\gamma_1) = L_\lambda(\gamma_2)$.
Combining this with \eqref{COMVE116}, we conclude that
\begin{equation}\label{COMVE119}
u\magenta{\Big|_Z} = L_\lambda(\gamma_1) = L_\lambda(\gamma_2)
\end{equation}
in $H^0(Z, T_P)$. This implies that the map $L_\lambda$ in \eqref{COMVE106} is onto.
Indeed, the combination of \eqref{COMVE117} and \eqref{COMVE119} tells us exactly what $L_\lambda$ is:
\begin{equation}\label{COMVE121}
\boxed{L_\lambda(\gamma) = u\Big|_Z
	\text{ if } \gamma\Big|_\Lambda = \eta(u)
	\Big|_\Lambda}
\end{equation}
for $\gamma\in T_{B,b} = \Span J_d$ and $u\in H^0(\E)$.
Let us see the geometric implication of \eqref{COMVE121}.

Let $\widehat{\eta}$ be the map given by the commutative diagram
\begin{equation}\label{COMVE120}
\begin{tikzcd}
H^0(\E) \ar[two heads]{d}[left]{\xi} \ar{r}{\eta} & H^0(\CO_P(d)) \ar[two heads]{d}[right]{\xi}\\
H^0(\Lambda, \E) \ar{r}{\widehat{\eta}} & H^0(\CO_\Lambda(d)).
\end{tikzcd}
\end{equation}
Obviously, $\widehat{\eta}$ is the restriction of $\eta$ to $\Lambda$ and defined in the same way as $\eta$ by
\begin{equation}\label{COMVE122}
\widehat{\eta}\left(
x_i \frac{\partial}{\partial x_j}
\right) = x_i \frac{\partial F}{\partial x_j}
\end{equation}
for all $0\le i,j\le n+1$ with everything restricted to $\Lambda$.

Since
\begin{equation}\label{COMVE123}
h^0(\Lambda, \E) - h^0(\CO_{\Lambda}(d)) = 2(n + 2) - (d+1) > 0
\end{equation}
for $d = 2n+2$, there exists $u\ne 0\in h^0(\Lambda, \E)$ such that
$\widehat{\eta}(u) = 0$. By \eqref{COMVE121}, $u$ vanishes in $H^0(Z, T_P)$. That is, $u$ lies in the kernel of the map
\begin{equation}\label{COMVE104}
\begin{tikzcd}
H^0(\Lambda, \E) \ar{r}{\rho} & H^0(Z, T_P).
\end{tikzcd}
\end{equation}
Obviously, $\ker(\rho)$ is two dimensional and $\alpha\in \ker(\rho)$ for $\alpha$ given in \eqref{COMVE009}.

We can make everything very explicit. If we identify $\Lambda$ with $\PP^1$ and let $p_1 = (0,1)$, $p_2 = (1,0)$ and $y$
be the affine coordinate of $\Lambda\backslash p_2$, then
\begin{equation}\label{COMVE105}
\widehat{\eta}\left(
\ker(\rho)
\right) = \Span \{ f(y), yf'(y)\}
\end{equation}
for $f(y) = \widehat{\eta}(\alpha) = \xi(F)\in H^0(\CO_{\Lambda}(d))$. \magenta{
To see this, we let
$$
x_i = a_i y + b_i
$$
be the restriction of $x_i$ to $\Lambda$. Then
$$
\sum g_i(y) \frac{\partial}{\partial x_i}\in \ker(\rho)
\Leftrightarrow \frac{g_0(y)}{a_0 y + b_0} = \frac{g_1(y)}{a_1 y + b_1} = ... = \frac{g_{n+1}(y)}{a_{n+1} y + b_{n+1}}\text{ at } 0,\infty
$$
Therefore, $\ker(\rho)$ is spanned by
$$
\alpha = \sum (a_i y + b_i) \frac{\partial}{\partial x_i} \hspace{12pt}\text{and}
\hspace{12pt} \beta = \sum a_i y \frac{\partial}{\partial x_i}.
$$
Then
$$
\widehat{\eta}(\beta) = \sum a_i y  \left.\frac{\partial F}{\partial x_i}\right|_\Lambda = y \sum \frac{d}{dy} (a_iy + b) \left.\frac{\partial F}{\partial x_i}\right|_\Lambda = y f'(y)
$$
and \eqref{COMVE105} follows.

}Since $u\ne 0\in \ker(\rho)$ and $\widehat{\eta}(u) = 0$, we conclude that
$f(y)$ and $yf'(y)$ must be two linearly dependent polynomials in $y$.
This can only happen if $f(y) = cy^m$, i.e., $\xi(F)$ vanishes only at $p_1$ and $p_2$. Namely, $X_b$ and $\Lambda$
have no intersections other than $p_1$ and $p_2$. So we have reached our key conclusion:

\begin{prop}\label{COMVPROPKC}
If there are two points $p_1\ne p_2$ on a general hypersurface
$X\subset \PP^{n+1}$ of degree $2n+2$ that are $\Gamma$-equivalent over $\BQ$,
then the line $\Lambda$ joining $p_1$ and $p_2$ meets $X$ only at $p_1$ and $p_2$.
\end{prop}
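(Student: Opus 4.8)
The plan is to feed the hypothesis into the criterion of \propref{COMVPROPVSA}: translating ``two $\Gamma$-equivalent points on a general $X$'' into two sections $\sigma_i$ with $\sigma_1(t)\sim_\Gamma\sigma_2(t)$ near a general $b\in B$, one learns that \eqref{COMVE600} must fail for the pair $Z=\{p_1,p_2\}$, $p_i=\sigma_i(b)$, and the associated $L_\lambda$. Since the special case has already been settled, we are reduced to $Z$ generic with respect to the coordinates $(x_i)$, so that \lemref{COMVLEMVXBGENERIC} applies ($\xi$ carries $\W_{X,b}$ onto all of $H^0(\Lambda,\E(1))$) and, by \lemref{COMVLEMJD}, $H^0(I_Z(1))\otimes\Span J_{d-1}$ is exactly $\ker\bigl(\xi\colon\Span J_d\to\Sym^d H^0(\CO_Z(1))\bigr)$, a subspace on which $L_\lambda$ vanishes by the earlier reduction. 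The strategy is to convert the failure of \eqref{COMVE600} into a rigidity formula for $L_\lambda$, and then into a constraint on $F|_\Lambda$.

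First I would show that for every $u\in H^0(\E)$ the failure of \eqref{COMVE600} pins down $u|_Z$ in terms of $L_\lambda$. Choosing $s_i\in H^0(\CO_P(1))$ with $s_i(p_i)\neq 0$ and $s_i(p_{3-i})=0$, use the surjectivity in \lemref{COMVLEMVXBGENERIC} to find $\omega_i\in\W_{X,b}$ with $\xi(\omega_i)=\xi(s_iu)$ on $\Lambda$, and then \lemref{COMVLEMJD2} to write $\eta(\omega_i)-s_i\gamma_i\in H^0(I_Z(1))\otimes\Span J_d$ for suitable $\gamma_i\in\Span J_d$. This produces $s_i\bigl(u-L_\lambda(\gamma_i)\bigr)\in W_{X,b,Z,\lambda}$ after restriction to $Z$, so if $u-L_\lambda(\gamma_i)$ did not vanish at $p_i$ we would have verified \eqref{COMVE600}; hence it vanishes at $p_i$ for $i=1,2$. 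Chasing the various $\xi$ maps then gives $\xi(\gamma_i)=\xi(\eta(u))$ on $\Lambda$, so $\gamma_1-\gamma_2\in H^0(I_Z(1))\otimes\Span J_{d-1}$ by \lemref{COMVLEMJD}, whence $L_\lambda(\gamma_1)=L_\lambda(\gamma_2)$ and therefore $u=L_\lambda(\gamma_1)$ in $H^0(Z,T_P)$. In other words, $L_\lambda(\gamma)=u|_Z$ whenever $\gamma|_\Lambda=\eta(u)|_\Lambda$.

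To conclude I would exploit a dimension count in this formula. Since $h^0(\Lambda,\E)=2n+4$ exceeds $h^0(\CO_\Lambda(d))=d+1=2n+3$ for $d=2n+2$, the restriction $\widehat\eta\colon H^0(\Lambda,\E)\to H^0(\CO_\Lambda(d))$ of $\eta$ has a nonzero kernel element $u$; feeding it into the rigidity formula with $\gamma=0$ gives $u|_Z=0$, so $u$ lies in $\ker\rho$, where $\rho\colon H^0(\Lambda,\E)\to H^0(Z,T_P)$. This $\ker\rho$ is two-dimensional and contains the Euler field $\alpha$, whose image $\widehat\eta(\alpha)=\xi(F)$ we denote $f$. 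Identifying $\Lambda\cong\PP^1$ so that $p_1=(0,1)$, $p_2=(1,0)$ with affine coordinate $y$ on $\Lambda\setminus\{p_2\}$, one computes that $\widehat\eta(\ker\rho)$ is spanned by $f(y)$ and $yf'(y)$; were these linearly independent, $\widehat\eta$ would be injective on the two-dimensional $\ker\rho$, contradicting $\widehat\eta(u)=0$ with $u\neq 0$. Hence $yf'(y)$ is a scalar multiple of $f(y)$, which forces $f(y)=cy^m$; thus $\xi(F)$ vanishes only at $p_1$ and $p_2$, and since $p_1,p_2\in X_b$ the line $\Lambda$ meets $X_b$ exactly in $\{p_1,p_2\}$. (The degenerate possibility $\Lambda\subset X_b$, where $f\equiv 0$, does not arise: a general hypersurface of degree $2n+2$ in $\PP^{n+1}$ contains no line, since a line imposes $2n+3$ conditions on such hypersurfaces while the family of lines has dimension only $2n$.)

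I expect the main obstacle to be \lemref{COMVLEMVXBGENERIC} itself --- showing that $\xi(\W_{X,b})$ fills out $H^0(\Lambda,\E(1))$ for \emph{every} generic length-two $Z\subset X_b$ --- because it requires a careful case analysis according to which pairs $x_i,x_j$ span $H^0(\CO_Z(1))$, together with genericity of the structure constants $c_{ijk}$ to invert several small systems of linear equations. Once that surjectivity is available, the steps above are essentially linear-algebra bookkeeping plus one elementary ODE for $f$.
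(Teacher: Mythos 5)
Your proposal is correct and follows essentially the same route as the paper: reduce to the generic case via Proposition \ref{COMVPROPVSA} and the already-settled special case, derive the rigidity formula $L_\lambda(\gamma)=u|_Z$ whenever $\gamma|_\Lambda=\eta(u)|_\Lambda$ from Lemmas \ref{COMVLEMVXBGENERIC}, \ref{COMVLEMJD} and \ref{COMVLEMJD2}, and then use the count $h^0(\Lambda,\E)>h^0(\CO_\Lambda(d))$ together with the explicit description $\widehat\eta(\ker\rho)=\Span\{f(y),yf'(y)\}$ to force $\xi(F)=cy^m$. Your explicit dismissal of the degenerate case $\Lambda\subset X_b$ is a small but welcome addition that the paper leaves implicit.
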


It remains to prove the following:

\begin{prop}\label{COMVPROPLINE}
Let $P=\PP^{n+1}$, $\G(1,P)$ be the Grassmannian of lines in $P$ and $B = \PP H^0(\CO_P(d))$ be the parameter space
of hypersurfaces in $P$ of degree $d = 2n+2$. For $0 < m < d$, let $W_m$ be the incidence correspondence
\begin{equation}\label{COMVE108}
\begin{aligned}
W_m &= \Big\{ (X, \Lambda, p_1, p_2): p_1\ne p_2 \text{ and }
X.\Lambda = m p_1 + (d-m) p_2  \Big\}
\\
& \subset B\times \G(1,P)\times P\times P.
\end{aligned}
\end{equation}
Then
\begin{enumerate}
\item[(1)]
$W_m$ is irreducible.
\item[(2)]
$W_m$ is generically finite over $B$ via the projection $\pi: W_m\to B$.
\item[(3)]
For a general $X\in B$, the fiber $\pi^{-1}([X])$ contains at least two points
$(X, \Lambda_i, p_{i1}, p_{i2})$ for $i=1,2$ such that $p_{11} \ne p_{21}$ and the line joining
$p_{11}$ and $p_{21}$ meet $X$ at more than two points.
\end{enumerate}
\end{prop}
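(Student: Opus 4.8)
The plan is to realize $W_m$ as a projectivized vector bundle over a space of pointed lines. Let $U\subset \G(1,P)\times P\times P$ be the locally closed subset of triples $(\Lambda,p_1,p_2)$ with $p_1\ne p_2$ and $p_1,p_2\in\Lambda$; since a distinct ordered pair determines the line $\Lambda=\overline{p_1p_2}$, $U$ is isomorphic to the complement of the diagonal in $P\times P$ and is irreducible of dimension $2(n+1)$. The forgetful map $q\colon W_m\to U$ has fibre over $(\Lambda,p_1,p_2)$ equal to $\{[X]\in B:\ X|_\Lambda=m\,p_1+(d-m)\,p_2\ \text{in}\ |\CO_\Lambda(d)|\}$. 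As the restriction $H^0(\CO_P(d))\to H^0(\CO_\Lambda(d))$ is surjective, the forms restricting to a fixed nonzero divisor on $\Lambda$ form an affine subspace of codimension $d$, and these assemble into a vector subbundle $\f\subset U\times H^0(\CO_P(d))$ of rank $h^0(\CO_P(d))-d$. Hence $W_m$ is the dense open subscheme of $\PP(\f)$ obtained by deleting the locus where the form vanishes on $\Lambda$; in particular $W_m$ is irreducible, and $\dim W_m=\dim U+(\dim B-d)=2(n+1)+\dim B-(2n+2)=\dim B$.

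\textbf{Part (2).} From Part (1), $\dim W_m=\dim B$, so the general fibre of $\pi\colon W_m\to B$ has dimension $\dim W_m-\dim\overline{\pi(W_m)}\le 0$, i.e.\ $\pi$ is generically finite. Dominance of $\pi$ (equivalently, nonemptiness of the general fibre) will be subsumed in Part (3); alternatively it follows from the classical fact that a general degree-$d$ hypersurface carries lines with a prescribed contact pattern $(m,d-m)$ in the correct finite number, this number being a strictly positive intersection number on the Grassmannian.

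\textbf{Part (3).} First we show $\deg\pi\ge 2$, i.e.\ the general fibre of $\pi$ consists of at least two reduced points; this can be done via the enumerative count of Part (2), or by degenerating $X$ to a special hypersurface (e.g.\ a Fermat-type or reducible member) on which two distinct lines of the required contact type are visible, and spreading out using the irreducibility of $W_m$. Next we consider the two projections $W_m\to B\times P$ sending a point to $(X,p_1)$, resp.\ $(X,p_2)$; the crucial step is that at least one of them is not birational onto its image, i.e.\ for general $X$ some point $q\in X$ is a common marked point of two distinct members of $\pi^{-1}([X])$. Because the line of each member is determined by its marked pair, distinctness of the two members forces the remaining marked points to differ; after possibly interchanging the roles of $p_1$ and $p_2$ we obtain two fibre points $(X,\Lambda_i,p_{i1},p_{i2})$, $i=1,2$, with $p_{12}=p_{22}=q$ and $p_{11}\ne p_{21}$. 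Finally we check that $\overline{p_{11}p_{21}}$ meets $X$ in more than two points for general $X$: otherwise $(X,\overline{p_{11}p_{21}},p_{11},p_{21})$ would lie in some $W_{m'}$, so that $q,p_{11},p_{21}$ (which are not collinear, since collinearity would force $\Lambda_1=\Lambda_2$ and hence $p_{11}=p_{21}$) would carry three high-contact lines on the plane section of $X$ by the plane they span, and a dimension count in $B$ shows this configuration cannot persist for general $X$.

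\textbf{Main obstacle.} The serious point is the non-birationality claim in Part (3): ruling out that, for a general degree-$(2n+2)$ hypersurface, the $\deg\pi$ contact lines have pairwise distinct marked points with no two sharing one, i.e.\ that both projections $W_m\to B\times P$ are birational onto their images. Transitivity of the monodromy of $\pi$ (from the irreducibility of $W_m$) does not preclude this, so one needs either an explicit enumerative argument showing that the incidence variety of triples $(X,q,\Lambda)$ with $q$ a marked point has degree $\ge 2$ over $B$, or a specialization on which the shared-point configuration is exhibited and then deformed. Once this is in hand, Parts (1)–(2) together with the transversality of the residual line are comparatively routine.
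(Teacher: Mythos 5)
Part (1) of your proposal is correct and is the paper's argument (fibre the incidence variety over the space of pointed lines; each fibre is an open subset of a linear subspace of $B$ of codimension $d$, and $\dim\G(1,P)+2-d=0$ for $d=2n+2$). For part (2), however, your dimension count only gives generic finiteness \emph{assuming dominance}, which you defer; and the enumerative fix you sketch is not airtight as stated, because a nonzero virtual count of contact lines could a priori be carried entirely by degenerate configurations outside $W_m$ (total-tangency lines with $p_1=p_2$, or lines contained in $X$ — the latter excluded here since $d=2n+2$ exceeds the Fano bound, but the former not). The paper closes both (2) and the dominance question at once by exhibiting a single explicit $(X,\Lambda,p_1,p_2)\in W_m$ at which the line is infinitesimally rigid, i.e. $H^0(N)=0$ in the sequence $0\to T_\Lambda(-p_1-p_2)\to T_P(-\log X)\big|_\Lambda\to N\to 0$: an isolated point of a fibre of $\pi$ on an irreducible $W_m$ of dimension $\dim B$ forces $\pi$ to be dominant and generically finite.

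The genuine gap is in part (3), and you have correctly diagnosed it yourself: your route hinges on the claim that one of the projections $W_m\to B\times P$ fails to be birational onto its image (so that two fibre members share a marked point), and you give no proof of this. Worse, that claim is not what the statement requires — Proposition (3) only asks for \emph{two} fibre members with $p_{11}\ne p_{21}$ whose joining line meets $X$ in more than two points; no shared marked point is needed, so you have replaced the target by a different (and possibly false) assertion whose failure would sink your argument. Your final "otherwise $(X,\overline{p_{11}p_{21}},p_{11},p_{21})$ lies in some $W_{m'}$" step also needs care, since "meets $X$ in at most two points" does not pin down a single $m'$ and the promised dimension count is not performed. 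The paper's proof of (3) is your own fallback option carried out: exhibit one explicit $X_0$ admitting two contact lines $\Lambda_1,\Lambda_2$ in the required configuration, each rigid in the sense $H^0(N)=0$ above; rigidity lets both deform to all nearby $X$, and the conditions "$p_{11}\ne p_{21}$" and "$\overline{p_{11}p_{21}}\cap X$ has more than two points" are open, so they persist for general $X$. Until such a configuration (or your non-birationality claim) is actually produced, the proposal does not establish (3).
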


Let us see how the above proposition implies our main theorem.
We consider the incidence correspondence
\begin{equation}\label{COMVE124}
\begin{aligned}
W &= \Big\{ (X, \Lambda, p_1, p_2): p_1\ne p_2\in X\cap \Lambda \text{ and }
p_1 \sim_{\Gamma} p_2 \text{ over } \BQ \Big\}
\\
& \subset B\times \G(1,P)\times P\times P
\end{aligned}
\end{equation}
for $B = \PP H^0(\CO_P(d))$. This is a locally Noetherian scheme, a priori.

If no components of $W$ dominate $B$, we are done. Otherwise, by Proposition \ref{COMVPROPKC}
and \ref{COMVPROPLINE}, $W$ must contain some $W_m$ as an irreducible component. Then by
Proposition \ref{COMVPROPLINE} again, for $X\in B$ general, there exist
$(X, \Lambda_i, p_{i1}, p_{i2})\in W_m\subset W$ for $i=1,2$ such that $p_{11} \ne p_{21}$ and
the line joining $p_{11}$ and $p_{21}$ meet $X$ at more than two points.

Since $p_{i1}\sim_{\Gamma} p_{i2}$ over $\BQ$ and $X.\Lambda_i = m p_{i1} + (d-m) p_{i2}$, we have
\begin{equation}\label{COMVE125}
\magenta{X.\Lambda_1 \sim_{\PP^1} X.\Lambda_2 \Rightarrow X.\Lambda_1 \sim_\Gamma X.\Lambda_2\Rightarrow }
d p_{i1} \sim_\Gamma d p_{i2} \sim_\Gamma X.\Lambda_i
\end{equation}
over $\BQ$ on $X$ for $i=1,2$. It follows that all four points $p_{ij}$ are $\Gamma$-equivalent over $\BQ$.
Then by Proposition \ref{COMVPROPKC} again, the line joining $p_{11}$ and $p_{21}$ must meet $X$
only at $p_{11}$ and $p_{21}$, which is a contradiction.

It remains to prove Proposition \ref{COMVPROPLINE}.

\begin{proof}[Proof of Proposition \ref{COMVPROPLINE}]
The proof of this statement is fairly standard. To see that $W_m$ is irreducible of \red{dimension} $\dim B$, it suffices to
project it to $\G(1,P)\times P\times P$. The fiber of $W_m$ over $(\Lambda, p_1, p_2)$ for $p_1\ne p_2\in \Lambda$
is a linear subspace of $B$ of dimension $\dim B - d$. Therefore, $W_m$ is irreducible of dimension
\begin{equation}\label{COMVE126}
\begin{aligned}
\dim W_m &= \dim \big\{(\Lambda, p_1,p_2): p_1\ne p_2\in \Lambda\big\} + (\dim B - d)
\\
&= \dim \G(1,P) + 2 - d + \dim B\\
&= \dim B + (2n+2 - d) = \dim B
\end{aligned}
\end{equation}
for $d=2n+2$.

To show that $W_m$ is generically finite over $B$, it suffices to exhibit
a point $(X,\Lambda, p_1, p_2)\in W_m$ such that $\Lambda$ does not deform while preserving the
tangency conditions with $X$. By that we mean there does not exist a one-parameter family of
lines $\Lambda_t$ such that $\Lambda_0 = \Lambda$ and $\Lambda_t$ meets $X$ at
two points with multiplicities $m$ and $d-m$, respectively. Such deformation of $\Lambda$ is
governed by the standard exact sequence
\begin{equation}\label{COMVE127}
\begin{tikzcd}
0 \ar{r} & T_\Lambda(-p_1 -p_2) \ar{r} & T_P(-\log X)\Big|_\Lambda
\ar{r} & N \ar{r} & 0.
\end{tikzcd}
\end{equation}
It is easy to find $(X,\Lambda, p_1,p_2)\in W_m$ such that $H^0(N) = 0$. We leave the details to Appendix \ref{COMVSECSF}.

Finally, to show (3), it again suffices to exhibit $(X,\Lambda_i,p_{i1},p_{i2})\in W_m$ for $i=1,2$ with the
required properties and neither $\Lambda_1$ nor $\Lambda_2$ deforms while preserving the tangency
conditions with $X$. Again, it is easy to find such $X$ and $\Lambda_i$ and use the exact sequence
\eqref{COMVE127} to show that $\Lambda_i$ do not deform. \red{Again, we refer the reader to Appendix A for the details.}
\end{proof}

This finishes the proof of our main theorem \ref{COMVTHMHYPERSURFACE}. It remains to provide the following
\begin{proof}[Proof of Lemma \ref{COMVLEMVXBGENERIC}]
Let $\{\omega_{ijk}\}$ be the basis of $\W_{X,b}$ given by \eqref{COMVE514} with $c_{ijk}$ given by \eqref{COMVE026}. For $b\in B$ general, $\{c_{ijk}: 0\le i\ne j,k\le n+1\}$ is a general set of numbers satisfying $c_{ijk} = c_{ikj}$.

We write $u_1 \equiv u_2$ if $\xi(u_1 - u_2)\in \xi(\W_{X,b})$. Of course,
we have $\omega_{ijk} \equiv 0$ and
want to show that $u\equiv 0$ for all $u\in H^0(\E(1))$.

For starters, it is obvious that
\begin{equation}\label{COMVE300}
\omega_{ijk} \equiv 0 \Rightarrow x_i x_j \frac{\partial}{\partial x_k} \equiv 0
\text{ for all } i\ne j\ne k
\end{equation}
and
\begin{equation}\label{COMVE305}
\omega_{iik} \equiv 0 \Rightarrow x_i^2 \frac{\partial}{\partial x_k}
- \sum_{j\ne i} c_{ijk} x_i x_j \frac{\partial}{\partial x_i}
\equiv 0
\text{ for all } i\ne k.
\end{equation}

Without loss of generality, we assume \eqref{COMVE522}. We discuss in two cases:
\begin{enumerate}
\item
Suppose that
\begin{equation}\label{COMVE033}
\Span \{x_0,x_1\} = \Span \{x_1,x_i\} = \Span\{x_i, x_0\} = H^0(\CO_Z(1))
\end{equation}
for some $i$.
Without loss of generality, we may assume that $i=2$. Namely, we have
\begin{equation}\label{COMVE020}
\Span \{x_0,x_1\} = \Span \{x_1,x_2\} = \Span\{x_2, x_0\} = H^0(\CO_Z(1)).
\end{equation}
\item
Otherwise,
suppose that there does not exist $x_i$ satisfying \eqref{COMVE033}. Namely, for each $x_i$, either
$x_i\in \Span\{x_0\}$ or $x_i\in \Span\{x_1\}$ in $H^0(\CO_Z(1))$.
And since $Z$ is generic, there must exist $i\ne j\ne 0,1$ such that
\begin{equation}\label{COMVE044}
\Span \{x_0,x_i\} = \Span \{x_1,x_j\} = H^0(\CO_Z(1)).
\end{equation}
Without loss of generality, we may assume that $i=3$ and $j=2$. In summary, when \eqref{COMVE033} fails, we may assume that
\begin{equation}\label{COMVE045}
\begin{aligned}
\Span \{x_0,x_3\} &= \Span \{x_1,x_2\} = \Span\{x_0,x_1\} = H^0(\CO_Z(1)) \text{ and}\\
\{x_2,...,x_{n+1}\} &\subset \Span\{x_0\} \cup \Span\{x_1\}\text{ in } H^0(\CO_Z(1)).
\end{aligned}
\end{equation}
\end{enumerate}

In the first case, we assume \eqref{COMVE020}.
Then for all $k\ne 0,1,2$ and all $i,j$,
\begin{equation}\label{COMVE021}
x_0x_1 \frac{\partial}{\partial x_k} \equiv x_1x_2 \frac{\partial}{\partial x_k} \equiv x_0 x_2 \frac{\partial}{\partial x_k} \equiv 0
\end{equation}
and hence
\begin{equation}\label{COMVE037}
x_i x_j \frac{\partial}{\partial x_k} \equiv 0
\end{equation}
since $\{x_0x_1, x_1x_2, x_0x_2\}$ spans $H^0(\CO_\Lambda(2))$ by \eqref{COMVE020}.

Suppose that $x_k\ne 0$ in $H^0(\CO_Z(1))$ for some $3\le k\le n+1$.
Without loss of generality, suppose that $x_3\ne 0$ in $H^0(\CO_Z(1))$.
Then at least two pairs among $\{x_0,x_3\}$, $\{x_1,x_3\}$ and $\{x_2,x_3\}$ are linearly independent in $H^0(\CO_Z(1))$. Without loss of generality, let us assume that
\begin{equation}\label{COMVE032}
\Span \{x_0,x_1\} = \Span \{x_1,x_3\} = \Span\{x_3, x_0\} = H^0(\CO_Z(1)).
\end{equation}
Then
\begin{equation}\label{COMVE310}
x_0x_1 \frac{\partial}{\partial x_2} \equiv x_1 x_3 \frac{\partial}{\partial x_2} \equiv x_0 x_3 \frac{\partial}{\partial x_2} \equiv 0 \Rightarrow x_ix_j\frac{\partial}{\partial x_2} \equiv 0
\end{equation}
for all $i,j$. That is,
\eqref{COMVE037} holds for
$k=2$ as well. Thus, it holds for
all $k\ne 0,1$:
\begin{equation}\label{COMVE036}
x_i x_j \frac{\partial}{\partial x_k} \equiv 0 \text{ if } k\ne 0,1.
\end{equation}
It remains to prove \eqref{COMVE037}
for $k=0,1$.

\magenta{Setting $i \ne 0,1$ and $k = 0,1$ in \eqref{COMVE305}, we have
$$
x_i^2 \frac{\partial}{\partial x_k}
\equiv \sum_{j\ne i} c_{ijk} x_i x_j \frac{\partial}{\partial x_i}
\equiv 0
$$
by \eqref{COMVE036}. Switching $i$ and $k$, we can rewrite the above as
$$
x_k^2 \frac{\partial}{\partial x_i}
\equiv 0 \text{ for all } i=0,1 \text{ and } k\ne 0,1.
$$
Similarly, setting $i = 0,1$ and $k \ne 0,1$ in \eqref{COMVE305}, we have
$$
\sum_{j\ne i} c_{ijk} x_i x_j \frac{\partial}{\partial x_i}
\equiv x_i^2 \frac{\partial}{\partial x_k}
\equiv 0
$$
by \eqref{COMVE036}. In summary, by \eqref{COMVE305} and \eqref{COMVE036},}
we see that
\begin{equation}\label{COMVE035}
x_k^2 \frac{\partial}{\partial x_i} \equiv x_i\sum_{j\ne i} c_{ijk}
x_j\frac{\partial}{\partial x_i} \equiv 0
\text{ for all } i=0,1 \text{ and } k\ne 0,1.
\end{equation}
Setting $i=0$ in \eqref{COMVE035} and combining it with \eqref{COMVE300}, we have
\begin{equation}\label{COMVE038}
x_k^2 \frac{\partial}{\partial x_0}
\equiv
x_0 \sum_{j\ne 0} c_{0jk}
x_j \frac{\partial}{\partial x_0}
\equiv  x_kx_l \frac{\partial}{\partial x_0}
\equiv 0
\text{ for all } k>l\ge 1.
\end{equation}
If $\Span\{x_k,x_l\} = H^0(\CO_Z(1))$ for some $k>l\ge 2$, then
\begin{equation}\label{COMVE309}
x_k^2 \frac{\partial}{\partial x_0} \equiv x_l^2 \frac{\partial}{\partial x_0} \equiv x_k x_l \frac{\partial}{\partial x_0} \equiv 0 \Rightarrow x_ix_j\frac{\partial}{\partial x_0} \equiv 0
\end{equation}
for all $i,j$ by \eqref{COMVE038}. Otherwise,
$x_k$ and $x_l$ are \red{linearly} dependent in $H^0(\CO_Z(1))$ for all $k>l\ge 2$. This implies that
\begin{equation}\label{COMVE040}
x_3,...,x_{n+1} \in \Span\{x_2\}
\end{equation}
in $H^0(\CO_Z(1))$. Thus
\begin{equation}\label{COMVE041}
x_2^2 \frac{\partial}{\partial x_0}
\equiv x_1x_2 \frac{\partial}{\partial x_0}
\equiv 0 \Rightarrow
x_0 x_2 \frac{\partial}{\partial x_0}
\equiv 0 \Rightarrow x_0 x_j \frac{\partial}{\partial x_0}
\equiv 0 \text{ for } j\ge 2
\end{equation}
since $x_0 \in \Span\{x_1,x_2\}$.
So we may rewrite \eqref{COMVE038} as
\begin{equation}\label{COMVE039}
x_2^2 \frac{\partial}{\partial x_0}
\equiv x_1x_2 \frac{\partial}{\partial x_0}
\equiv c_{01k} x_0 x_1 \frac{\partial}{\partial x_0}
\equiv
0
\end{equation}
for all $k\ge 2$. \red{Since $c_{012} \ne 0$ for general $b$}, we have
\begin{equation}\label{COMVE311}
x_0 x_1 \frac{\partial}{\partial x_0}
\equiv 0\Rightarrow x_1^2 \frac{\partial}{\partial x_0}
\equiv 0
\end{equation}
since $x_0 = b_1 x_1 + b_2 x_2$ in $H^0(\CO_Z(1))$ for some $b_i\ne 0$ by \eqref{COMVE020}.
Combining \eqref{COMVE039} and \eqref{COMVE311}, we conclude that
$x_ix_j(\partial/\partial x_0) \equiv 0$
for all $i,j$.
This proves \eqref{COMVE037}
for $k=0$. The same argument works for $k=1$. This finishes the proof of the lemma if we have \eqref{COMVE020} and one of $x_3,...,x_{n+1}$ does not vanish in $H^0(\CO_Z(1))$.

Otherwise, while we still have \eqref{COMVE020},
$x_3=...=x_{n+1}=0$
in $H^0(\CO_Z(1))$. Then we have a system of
linear equations:
\begin{equation}\label{COMVE042}
\begin{aligned}
x_0x_1\frac{\partial}{\partial x_2}
\equiv x_1x_2\frac{\partial}{\partial x_0}
\equiv x_0x_2\frac{\partial}{\partial x_1}
&\equiv 0\\
(c_{013}x_0 x_1 + c_{023} x_0 x_2)
\frac{\partial}{\partial x_0}
&\equiv 0\\
(c_{103}x_1 x_0 + c_{123} x_1 x_2)
\frac{\partial}{\partial x_1}
&\equiv 0\\
(c_{203}x_2 x_0 + c_{213} x_2 x_1)
\frac{\partial}{\partial x_2}
&\equiv 0\\
x_0^2 \frac{\partial}{\partial x_1}
- (c_{011}x_0 x_1 + c_{021} x_0 x_2)
\frac{\partial}{\partial x_0}
&\equiv 0\\
x_0^2 \frac{\partial}{\partial x_2}
- (c_{012}x_0 x_1 + c_{022} x_0 x_2)
\frac{\partial}{\partial x_0}
&\equiv 0\\
x_1^2 \frac{\partial}{\partial x_0}
- (c_{100}x_1 x_0 + c_{120} x_1 x_2)
\frac{\partial}{\partial x_1}
&\equiv 0\\
x_1^2 \frac{\partial}{\partial x_2}
- (c_{102}x_1 x_0 + c_{122} x_1 x_2)
\frac{\partial}{\partial x_1}
&\equiv 0\\
x_2^2 \frac{\partial}{\partial x_0}
- (c_{200}x_2 x_0 + c_{210} x_2 x_1)
\frac{\partial}{\partial x_2}
&\equiv 0\\
x_2^2 \frac{\partial}{\partial x_1}
- (c_{201}x_2 x_0 + c_{211} x_2 x_1)
\frac{\partial}{\partial x_2}
&\equiv 0
\end{aligned}
\end{equation}
Suppose that
\begin{equation}\label{COMVE043}
a_0 x_0 + a_1 x_1 + a_2 x_2 = 0
\end{equation}
in $H^0(\CO_Z(1))$ for some constants
$a_0, a_1, a_2$, not all zero. By our hypothesis \eqref{COMVE020}, $a_i\ne 0$
for $i=0,1,2$.

Using \eqref{COMVE043}, we can reduce
\eqref{COMVE042} into a system of linear equations in $x_i^2(\partial/\partial x_j)$ for
$0\le i\ne j\le 2$. For example,
\begin{equation}\label{COMVE012}
\begin{aligned}
\left.\begin{aligned}
(c_{013}x_0 x_1 + c_{023} x_0 x_2)
\frac{\partial}{\partial x_0}
&\equiv 0\\
x_1x_2 \frac{\partial}{\partial x_0} &\equiv 0
\end{aligned}\right\}
&\Rightarrow\\
(a_1 x_1 + a_2 x_2)(c_{013}x_1 + c_{023} x_2)
\frac{\partial}{\partial x_0}
&\equiv a_1 c_{013} x_1^2 \frac{\partial}{\partial x_0}
+ a_2 c_{023} x_2^2 \frac{\partial}{\partial x_0}\\
&\equiv 0.
\end{aligned}
\end{equation}
In this way, we obtain a more managable system of linear equations:
\begin{equation}\label{COMVE047}
\begin{aligned}
c_{013} \left(a_1 x_1^2 \frac{\partial}{\partial x_0}\right)
+ c_{023} \left(a_2 x_2^2 \frac{\partial}{\partial x_0}\right) &\equiv 0\\
c_{103} \left(a_0 x_0^2 \frac{\partial}{\partial x_1}\right) +  c_{123} \left(a_2 x_2^2
\frac{\partial}{\partial x_1}\right)
&\equiv 0\\
c_{203} \left(a_0 x_0^2 \frac{\partial}{\partial x_2}\right) +  c_{213} \left(a_1 x_1^2
\frac{\partial}{\partial x_2}\right)
&\equiv 0
\\
a_0 x_0^2 \frac{\partial}{\partial x_1}
+ c_{011} \left(a_1 x_1^2 \frac{\partial}{\partial x_0}\right) +  c_{021} \left(a_2 x_2^2
\frac{\partial}{\partial x_0}\right)
&\equiv 0\\
a_0 x_0^2 \frac{\partial}{\partial x_2}
+
c_{012} \left(a_1 x_1^2 \frac{\partial}{\partial x_0}\right) +  c_{022} \left(a_2 x_2^2
\frac{\partial}{\partial x_0}\right)
&\equiv 0\\
a_1x_1^2 \frac{\partial}{\partial x_0}
+ c_{100} \left(a_0 x_0^2 \frac{\partial}{\partial x_1}\right) + c_{120} \left(a_2 x_2^2
\frac{\partial}{\partial x_1}\right)
&\equiv 0\\
a_1 x_1^2 \frac{\partial}{\partial x_2}
+ c_{102}\left(a_0x_0^2 \frac{\partial}{\partial x_1}\right) + c_{122} \left(a_2 x_2^2
\frac{\partial}{\partial x_1}\right)
&\equiv 0\\
a_2 x_2^2 \frac{\partial}{\partial x_0}
+ c_{200} \left(a_0 x_0^2 \frac{\partial}{\partial x_2}\right) + c_{210} \left(a_1 x_1^2
\frac{\partial}{\partial x_2}\right)
&\equiv 0\\
a_2 x_2^2 \frac{\partial}{\partial x_1}
+ c_{201} \left(a_0x_0^2 \frac{\partial}{\partial x_2}\right) + c_{211} \left(a_1 x_1^2
\frac{\partial}{\partial x_2}\right)
&\equiv 0.
\end{aligned}
\end{equation}
We may consider \eqref{COMVE047} as a system of homogeneous linear equations in $a_i x_i^2 (\partial/\partial x_j)$ for $0\le i\ne j\le 2$. It is easy to show that \eqref{COMVE047} has only the trivial solution for $c_{ijk}$ general. That is,
\begin{equation}\label{COMVE048}
a_i x_i^2 \frac{\partial}{\partial x_j}
\equiv 0 \Rightarrow x_i^2 \frac{\partial}{\partial x_j} \equiv 0
\text{ for all } i\ne j.
\end{equation}
Together with \eqref{COMVE300}, we see that
\eqref{COMVE037} holds for all $i,j,k$.
This finishes the proof of the lemma in the first case.

In the second case, we assume \eqref{COMVE045}.
Note that under this hypothesis, $\{x_0, x_2\}$ and $\{x_1, x_3\}$ are linearly dependent in $H^0(\CO_Z(1))$, respectively.
Then for all $k\ne 0,1,2,3$,
\begin{equation}\label{COMVE046}
\begin{aligned}
&\quad x_0 x_1 \frac{\partial}{\partial x_k}
\equiv x_0x_2 \frac{\partial}{\partial x_k}
\equiv x_1 x_3 \frac{\partial}{\partial x_k}
\equiv 0
\\
&\Rightarrow
x_0 x_1 \frac{\partial}{\partial x_k}
\equiv x_0^2 \frac{\partial}{\partial x_k}
\equiv x_1^2 \frac{\partial}{\partial x_k}
\equiv 0.
\end{aligned}
\end{equation}
And since $\{x_0^2, x_0x_1, x_1^2\}$ spans $H^0(\CO_\Lambda(2))$,
we see that \eqref{COMVE037} holds for all $k\ge 4$. It remains to prove
\eqref{COMVE037} for $k=0,1,2,3$.
We argue in a similar way to the first case.

Suppose that one of $x_4,...,x_{n+1}$ does not vanish in $H^0(\CO_Z(1))$.
Without loss of generality, suppose that $x_4 \ne 0$ in $H^0(\CO_Z(1))$. By \eqref{COMVE045}, $x_4$ lies in either $\Span \{x_0\}$ or $\Span\{x_1\}$. Without loss of generality, we may assume that
$x_4\ne 0\in \Span\{x_0\}$ in $H^0(\CO_Z(1))$. Then
\begin{equation}\label{COMVE312}
\begin{aligned}
&\quad x_1 x_4 \frac{\partial}{\partial x_0}
\equiv x_2x_4 \frac{\partial}{\partial x_0}
\equiv x_1 x_3 \frac{\partial}{\partial x_0}
\equiv 0
\\
&\Rightarrow
x_0 x_1 \frac{\partial}{\partial x_0}
\equiv x_0^2 \frac{\partial}{\partial x_0}
\equiv x_1^2 \frac{\partial}{\partial x_0}
\equiv 0 \text{ and}\\
&\quad x_0 x_1 \frac{\partial}{\partial x_2}
\equiv x_0x_4 \frac{\partial}{\partial x_2}
\equiv x_1 x_3 \frac{\partial}{\partial x_2}
\equiv 0
\\
&\Rightarrow
x_0 x_1 \frac{\partial}{\partial x_2}
\equiv x_0^2 \frac{\partial}{\partial x_2}
\equiv x_1^2 \frac{\partial}{\partial x_2}
\equiv 0.
\end{aligned}
\end{equation}
So \eqref{COMVE037} holds for $k=0,2$ and hence for all $k\ne 1, 3$.

Let us prove \eqref{COMVE037} for $k=1$.
If $x_k\ne 0\in \Span\{x_1\}$
in $H^0(\CO_Z(1))$ for some $k\ge 5$, then we have \eqref{COMVE037} for $k=1,3$ by the same argument as above. Otherwise, $x_k\in \Span\{x_0\}$ for all $k\ne 1,3$. Then
\begin{equation}\label{COMVE049}
x_0 x_2 \frac{\partial}{\partial x_1}
\equiv x_0 x_3 \frac{\partial}{\partial x_1}
\equiv 0 \Rightarrow
x_0^2 \frac{\partial}{\partial x_1}
\equiv x_0 x_1 \frac{\partial}{\partial x_1}
\equiv 0
\end{equation}
and
\begin{equation}\label{COMVE050}
x_1^2 \frac{\partial}{\partial x_0} - \sum_{j\ne 1} c_{1j0}x_1 x_j\frac{\partial}{\partial x_1}
\equiv 0
\Rightarrow c_{130} x_1 x_3 \frac{\partial}{\partial x_1} \equiv 0.
\end{equation}
As long as $c_{130}\ne 0$, we have
\begin{equation}\label{COMVE051}
x_1 x_3 \frac{\partial}{\partial x_1} \equiv 0 \Rightarrow x_1^2 \frac{\partial}{\partial x_1} \equiv 0
\end{equation}
which, together with \eqref{COMVE049}, implies \eqref{COMVE037} for $k=1$. The same argument works for $k=3$. This proves the lemma if we have \eqref{COMVE045} and
one of $x_4,...,x_{n+1}$ does not vanish in $H^0(\CO_Z(1))$.

The only remaining case is that we have \eqref{COMVE045} and $x_4 = ... = x_{n+1} = 0$ in $H^0(\CO_Z(1))$.
In this case, we have
\begin{equation}\label{COMVE306}
\begin{aligned}
x_1 x_2 \frac{\partial}{\partial x_0}
\equiv x_1x_3 \frac{\partial}{\partial x_0}
\equiv 0 &\Rightarrow
x_0 x_1 \frac{\partial}{\partial x_0}
\equiv x_1^2 \frac{\partial}{\partial x_0}
\equiv 0\\
x_0 x_1 \frac{\partial}{\partial x_2}
\equiv x_1 x_3 \frac{\partial}{\partial x_2}
\equiv 0
&\Rightarrow x_0 x_1 \frac{\partial}{\partial x_2}
\equiv x_1^2 \frac{\partial}{\partial x_2}
\equiv 0
\end{aligned}
\end{equation}
and
\begin{equation}\label{COMVE313}
\begin{aligned}
x_0^2 \frac{\partial}{\partial x_2}
- \sum_{j\ne 0} c_{0j2} x_0x_j \frac{\partial}{\partial x_0} \equiv 0
&\Rightarrow
x_0^2 \frac{\partial}{\partial x_2}
- c_{022} x_0x_2 \frac{\partial}{\partial x_0}
\equiv 0\\
x_2^2 \frac{\partial}{\partial x_0}
- \sum_{j\ne 2} c_{2j0} x_2x_j \frac{\partial}{\partial x_2} \equiv 0 &\Rightarrow
x_2^2 \frac{\partial}{\partial x_0}
- c_{200} x_0x_2 \frac{\partial}{\partial x_2} \equiv 0.
\end{aligned}
\end{equation}
Suppose that $x_2 = a x_0$ in $H^0(\CO_Z(1))$ for some $a\ne 0$.
Then \eqref{COMVE313} becomes
\begin{equation}\label{COMVE307}
\begin{aligned}
- a c_{022} \left(x_0^2 \frac{\partial}{\partial x_0}\right) + x_0^2 \frac{\partial}{\partial x_2}
&\equiv 0\\
a^2 \left(x_0^2 \frac{\partial}{\partial x_0}\right)
- a c_{200} \left(x_0^2 \frac{\partial}{\partial x_2}\right) &\equiv 0.
\end{aligned}
\end{equation}
For $c_{022}c_{200}\ne 1$, \eqref{COMVE307} has only the trivial solution as a system of homogeneous linear equations in
$x_0^2 (\partial/\partial x_k)$ for $k=0,2$. That is,
\begin{equation}\label{COMVE308}
x_0^2 \frac{\partial}{\partial x_0}
\equiv x_0^2 \frac{\partial}{\partial x_2}\equiv 0
\end{equation}
which, combined with \eqref{COMVE306}, implies \eqref{COMVE037} for $k=0,2$. Similarly, we can prove \eqref{COMVE037} for $k=1,3$. This finishes the proof of the lemma.
\end{proof}
\appendix

\section{Simple facts on $\Gamma$-equivalence and multi-tangent lines to hypersurfaces}\label{COMVSECSF}

In this appendix, we prove a few simple facts. These should be more or less well known to the experts. We provide the proofs for readers' convenience.

First, we claim that $\Gamma$-equivalence is indeed an equivalence on $0$-cycles. It suffices to prove that it is \red{symmetric}: Fixing a smooth projective curve $\Gamma$ and two points $p\ne q$ on $\Gamma$, two $0$-cycles $\xi_1$ and $\xi_2$ on a projective variety $X$ are equivalent under $(\Gamma, p, q)$ if and only if they are equivalent under $(\Gamma, q, p)$.

As another interpretation of $\Gamma$-equivalence, $\xi_1$ and $\xi_2$ are equivalent under $(\Gamma, p, q)$ if and only if there exists a morphism $f: \Gamma \to S^N X$ for $N$ sufficiently large such that
$f(p) = \xi_1 + \eta$ and $f(q) = \xi_2 + \eta$ for some effective zero cycle $\eta$, where $S^N X$ is the $N$-th symmetric product of $X$.

Let us first show that there exists a morphism $\phi: \Gamma\to S^d \Gamma$ for $d$ sufficiently large such that
$\phi(p) = q+D$ and $\phi(q) = p+D$ for some effective divisor $D$ on $\Gamma$. There exists a natural map $\pi: S^d \Gamma \to \Pic^d(\Gamma) \cong J(\Gamma)$ sending $(p_1,p_2,...,p_d)$ to
$p_1 + p_2 + ... + p_d$. For $d \ge 2g - 1$, this is an $\PP^{d-g}$-bundle whose fibers can be identified with the complete linear series $|L|$ for $L\in \Pic^d(\Gamma)$, where $g = g(\Gamma)$ is the genus of $\Gamma$.

Fixing a divisor $F\in \Pic^{d+1}(\Gamma)$ for $d\ge 2g$, we can embed $\Gamma$ to $\Pic^d(\Gamma)$ by $\lambda: \Gamma \to \Pic^d(\Gamma)$ sending $\lambda(s) = F - s$ for all $s\in \Gamma$.
Let $Z = \pi^{-1}(\lambda(\Gamma))$ be the fiber product of $\pi$ and $\lambda$. Then $Z$ is a $\PP^{d-g}$-bundle over $\Gamma$, whose fiber over $s\in \Gamma$ is the linear series $|F - s|$. Since
$\deg (F-p-q) \ge 2g -1$, $F-p-q$ is effective and we let $D\in |F-p-q|$. Then $(p, q+D)$ and $(q,p+D)$ are two points on two fibers
of $Z$ over $\Gamma$. So there exists a section $\phi: \Gamma\to Z\subset S^d \Gamma$ passing through these two points. That is,
$\phi(p) = q+D$ and $\phi(q) = p+D$.

Now we combine $f: \Gamma \to S^N X$ and $\phi: \Gamma\to S^d \Gamma$ to obtain a morphism $h: \Gamma\to S^d(S^N X) \to S^{dN} X$ sending
$$
h(p) = f(q+D) = \xi_2 + f(D) \text{ and } h(q) = \xi_1 + f(D).
$$
It follows that $\xi_1$ and $\xi_2$ are also equivalent under $(\Gamma,q,p)$.

Next, let us explain how the sequence \eqref{COMVE127} governs the deformation of a line with tangency conditions with a fixed hypersurface.

More generally, let us consider the deformation of a nonconstant morphism $f: C\to P$ from a smooth \red{projective} curve $C$ to a smooth projective variety $P$. Suppose that $f$ deforms over a smooth variety $B$. That is,
there exists a smooth family \red{of curves} $W$ over $B$ and a morphism $\phi: W\to P\times B$ preserving the base $B$ such that $\phi_0: W_0\to P$ is exactly $f: C\to P$ when $\phi$ is restricted to a point $0\in B$.

The Kodaira-Spencer map $T_{B,0}\to H^0(\N_f)$
associated to $\phi$ is given by
\begin{equation}\label{COMVE888}
\begin{tikzcd}[column sep=normal]
 & & \phi^*\pi_B^* T_B\ar{d}\ar{dr}\\
 0 \ar{r} & T_W \ar{r} & \phi^* T_{P\times B} \ar{r} & \N_{\phi} \ar{r} & 0
\end{tikzcd}
\end{equation}
where $\N_\phi$ and $\N_f$ are normal bundles of the maps $\phi$ and $f$, respectively, and $\pi_B$ is the projection $P\times B\to B$.
Here we restrict the map $\phi^*\pi_B^* T_B\to \N_\phi$ to $W_0$
and take the global sections to obtain the map $T_{B,0}\to H^0(\N_f)$.

Now let us impose some tangency conditions on $f(C)$ with a fixed hypersurface $X\subset P$. For simplicity, let us assume that $X$ is a divisor of simple normal crossings. Suppose that
$$
f^* X = m_1 p_1 + m_2 p_2 + ... + m_r p_r
$$
for some distinct points $p_1,p_2,...,p_r\in C$. Let us assume that $\phi$ preserves the multiplicities of $p_i$ but not necessarily the points $f(p_i)$ themselves. That is,
$$
\phi^*(X\times B) = m_1 D_1 + m_2 D_2 + ... + m_r D_r
$$
where $D_i$ are disjoint sections of $W/B$ with $D_i\cap W_0 = p_i$.

Then the map $\phi^*\pi_B^* T_B\to \N_\phi$ in \eqref{COMVE888} factors through $\N_{\phi, X}$ via the diagram
\begin{equation}\label{COMVE788}
\begin{tikzcd}[column sep=small]
& & \phi^* \pi_B^* T_B\ar{d}\ar{dr}\\
0 \ar{r} & T_W(-\log D) \ar{r}\ar{d} & \phi^* T_{P\times B}(-\log (X\times B)) \ar{r} \ar{d} & \N_{\phi,X} \ar{r} \ar{d} & 0\\
0 \ar{r} & T_W \ar{r} & \phi^* T_{P\times B} \ar{r} & \N_\phi \ar{r} & 0
\end{tikzcd}
\end{equation}
where $D = \sum D_i$. Therefore, the Kodaira-Spencer map
$T_{B,0}\to H^0(\N_f)$ factors through $H^0(\N_{f,X})$, where $\N_{f,X}$ is given by the exact sequence
\begin{equation}\label{COMVE688}
\begin{tikzcd}
0 \ar{r} & T_C(-\sum p_i) \ar{r} & f^* T_P(-\log X) \ar{r} & \N_{f,X} \ar{r} & 0
\end{tikzcd}
\end{equation}
So the versal deformation space of $f: C\to P$ preserving the tangency conditions at $f^{-1}(X)$ with a fixed hypersurface $X$ has dimension no more than
\begin{equation}\label{COMVE588}
\begin{aligned}
h^0(\N_{f,X}) &\le h^0(f^* T_P(-\log X)) - \chi(T_C(-\sum p_i))
\\
&= h^0(f^* T_P(-\log X)) + (3g(C) - 3 + r).
\end{aligned}
\end{equation}

\red{Note the above inequality holds for \emph{any} $g=g(C)$}. Now let us apply \eqref{COMVE588} to the deformation of lines $\Lambda \hookrightarrow
P$ in $P = \PP^{n+1}$ preserving tangency conditions with a fixed hypersurface $X$ of degree $d$. Our purpose is to construct pairs $(X, \Lambda)$ such that the deformation $\Lambda$ has the expected dimension $0$ given by \eqref{COMVE588}. That is, these multi-tangent lines to $X$ are rigid.

Obviously, we need to compute $H^0(\Lambda, T_P(-\log X))$. This can be done via the restriction of the exact sequence
$$
\begin{tikzcd}
0 \ar{r} & T_P(-\log X) \ar{r} & T_P \ar{r} & \N_{X/P}\ar[equal]{d} \ar{r} & 0\\
&&& \CO_X(d)
\end{tikzcd}
$$
to $\Lambda$. Since $\Lambda\not\subset X$, the above exact sequence remains exact when restricted to $\Lambda$. So $H^0(\Lambda, T_P(-\log X))$ is the kernel of the map
\begin{equation}\label{COMVE988}
\begin{tikzcd}
H^0(\Lambda, T_P) \ar{r}{\eta} & H^0(\Lambda, \CO_X(d))
\end{tikzcd}
\end{equation}
\red{Note the right hand side of the map is equivalent to $H^0(\Gamma, \CO_{\Gamma}(d))/ \langle F|_{\Gamma}\rangle$.} This map has been used throughout the paper. Once again, it is given by
$$
\eta\left(
\sum L_i \frac{\partial}{\partial z_i}
\right) = \sum L_i \frac{\partial F}{\partial z_i}
$$
with $L_i\in H^0(\CO_P(1))$ and $\partial F/\partial z_i$ restricted to $\Lambda$,
where $(z_0,z_1,...,z_{n+1})$ are the homogeneous coordinates of $P = \PP^{n+1}$ and $F$ is the defining equation of $X$.

In our first example, we will construct a smooth hypersurface $X$ of degree $d=2n+1$ and two lines $\Lambda_i$ for $i=1,2$ such that each $\Lambda_i$ meets $X$ at a unique point $p_i$ with $p_1\ne p_2$ and has rigid deformation preserving the tangency. We let
$$
\begin{aligned}
\Lambda_1 &= \{z_2=z_3 =z_4=...=z_{n+1} = 0\},\ p_1 = (1,-1,0,...,0)\\
\Lambda_2 &= \{z_1=z_3 = z_4 =...=z_{n+1} = 0\},\ p_2 = (1,0,-1,0,...,0)\\
F &= (z_0+z_1+z_2)^d + z_1z_2 G_0(z_0,z_1,z_2) + \sum_{j=3}^{n+1} z_j G_j(z_0,z_1,z_2)\\
&\quad + G(z_3,z_4,...,z_{n+1})
\end{aligned}
$$
where $G_0(z_0,z_1,z_2), G_j(z_0,z_1,z_2)$ and $G(z_3,z_4,...,z_{n+1})$ are homogeneous polynomials in
$(z_0,z_1,z_2)$ and $(z_3,z_4,...,z_{n+1})$ of degree $d-2, d-1$ and $d$, respectively. For a general choice of $(G_0, G_j,G)$, $X = \{ F = 0\}$ is smooth by Bertini.

For each $\Lambda = \Lambda_i$ and a general choice of $(G_0, G_3,...,G_{n+1})$, it is easy to check that
the space
$$
\begin{aligned}
&\quad H^0(\CO_{\Lambda}(1)) \otimes \Span\left.\left\{
\frac{\partial F}{\partial z_0}, \frac{\partial F}{\partial z_1}, ..., \frac{\partial F}{\partial z_{n+1}}
\right\}\right|_\Lambda\\
&= H^0(\CO_{\Lambda}(1))
\otimes \Span\big\{
(z_0+z_1+z_2)^{d-1}, z_i G_0(z_0,z_1,z_2),\\
&\hspace{114pt} G_3(z_0,z_1,z_2), ..., G_{n+1}(z_0,z_1,z_2)
\big\}\Big|_\Lambda
\end{aligned}
$$
surjects onto $H^0(\CO_{\Lambda}(d))$. Therefore, the map $\eta$ in \eqref{COMVE988} is surjective for each $\Lambda = \Lambda_i$. Then it is easy to compute
$$
h^0(f^* T_P(-\log X)) = h^0(\Lambda, T_P) - h^0(\Lambda, \CO_X(d)) = (2n+3) - d = 2
$$
and hence each $\Lambda = \Lambda_i$ is rigid by \eqref{COMVE588}. This construction shows that there exist a smooth hypersurface $X\subset \PP^{n+1}$ of degree $2n+1$ and two lines $\Lambda_1$ and $\Lambda_2$, each tangent to $X$ at a point $p_i$ with multiplicity $2n+1$ such that $p_1 \ne p_2$ and neither $\Lambda_i$ can deform when preserving the tangency condition with $X$. Combining this with an incidence correspondence argument as in the proof of Proposition \ref{COMVPROPLINE}, we can prove that for a general hypersurface $X\subset \PP^{n+1}$ of degree $2n+1$,
there are finitely many lines in $\PP^{n+1}$ meeting $X$ at a unique point and there exist (at least) two lines
$\Lambda_i$ for $i=1,2$, each meeting $X$ at a unique point $p_i$ with $p_1\ne p_2$. This justifies our claim that
the bound $2n+2$ in Theorem \ref{COMVTHMHYPERSURFACE} is optimal.

In our second example, we will construct $(X, \Lambda_i, p_{i1}, p_{i2})$ as claimed in the proof of Proposition \ref{COMVPROPLINE}, where
\begin{itemize}
	\item $X$ is a smooth hypersurface in $P = \PP^{n+1}$ of degree $d=2n+2$,
	\item $\Lambda_1$ and $\Lambda_2$ are two lines, each meeting $X$ at two points $p_{i1}$ and $p_{i2}$ with assigned multiplicities $m$ and $d-m$,
	\item neither $\Lambda_i$ deforms when preserving the tangency conditions with $X$,
	\item $p_{11}\ne p_{21}$ and the line $\overline{p_{11} p_{21}}$ meets $X$ at more than two points.
\end{itemize}
As in the first example, we let
$$
\begin{aligned}
\Lambda_1 &= \{z_2=z_3 =z_4=...=z_{n+1} = 0\},\\
\Lambda_2 &= \{z_1=z_3 = z_4 =...=z_{n+1} = 0\},\\
p_{11} &= (1,-1,0,...,0),\ p_{12} = (1,1,0,...,0)\\
p_{21} &= (1,0,-1,0,...,0),\ p_{22} = (1,0,1,0,...,0)\\
F &= (z_0+z_1+z_2)^m (z_0-z_1 - z_2)^{d-m} + z_1z_2 G_0(z_0,z_1,z_2)\\
&\quad  + \sum_{j=3}^{n+1} z_j G_j(z_0,z_1,z_2) + G(z_3,z_4,...,z_{n+1})
\end{aligned}
$$
where $G_0(z_0,z_1,z_2), G_j(z_0,z_1,z_2)$ and $G(z_3,z_4,...,z_{n+1})$ are homogeneous polynomials in
$(z_0,z_1,z_2)$ and $(z_3,z_4,...,z_{n+1})$ of degree $d-2, d-1$ and $d$, respectively. For a general choice of $(G_0, G_j,G)$, $X = \{ F = 0\}$ is smooth by Bertini.

For $G_0$ general, the line $\overline{p_{11} p_{21}}$ clearly meets $X$ at more than two points.

As in the first example, it is easy to check that
$$
H^0(\CO_{\Lambda}(1)) \otimes \Span\left.\left\{
\frac{\partial F}{\partial z_0}, \frac{\partial F}{\partial z_1}, ..., \frac{\partial F}{\partial z_{n+1}}
\right\}\right|_\Lambda
$$
surjects onto $H^0(\CO_{\Lambda}(d))$ for
each $\Lambda = \Lambda_i$ and a general choice of $(G_0, G_j)$. Thus,
$$
h^0(f^* T_P(-\log X)) = h^0(\Lambda, T_P) - h^0(\Lambda, \CO_X(d)) = (2n+3) - d = 1
$$
and hence each $\Lambda = \Lambda_i$ is rigid by \eqref{COMVE588}. This proves the claim we made in the proof of Proposition \ref{COMVPROPLINE}.

\section{Notes on algebraic invariants}\label{COMVSECNOAI}

\magenta{Here we give another construction of the invariants defined in \secref{COMVSECRCM} based on Hodge theory.
These algebraic invariants from Hodge theory, some of which are used in \cite{V2}, are the same thing as de Rham invariants, the latter
not involving Hodge theory.
First let us fix some notations.}
For a $\BQ$-MHS $V$, we put $\Gamma(V) := \hom_{\rm MHS}(\BQ(0),V)$
and accordingly $J(V) := \Ext^1_{\rm MHS}(\Q(0),V)$.

To arrive at the invariants of interest,
we must introduce a natural filtration on the Chow groups of $X$.
Let $\rho : \X \to S$ be a smooth and proper morphism of smooth quasi-projective
varieties over a subfield \red{$k$ of $\BC$ finitely generated over $\overline{\BQ}$,} and let
$K= k(S)$. Fix an embedding $K \hookrightarrow \BC$ over $k$, and put $X := X/\BC =
\X_{\eta_S}\times_K\BC$.

\begin{thm}[\cite{Lew1}]

Let $ X := X/\BC$ be smooth projective
of dimension $d$.
Then for all $r\geq 0$, there is a filtration, depending on $k\subset \BC$,
\[
\CH^r(X;\Q) = F^0 \supseteq F^1\supseteq \cdots \supseteq F^{\nu}
\supseteq F^{\nu +1}\supseteq
\]
\[
 \cdots \supseteq F^r\supseteq F^{r+1}
= F^{r+2}=\cdots,
\]
which satisfies the following

\medskip
\noindent
{\rm (i)} $F^1 = \CH^r_{\hom}(X;\Q)$.

\medskip
\noindent
{\rm (ii)} $F^2 \subseteq \ker AJ\otimes\Q : \CH^r_{\hom}(X;\Q)
\rightarrow J\big(H^{2r-1}(X(\BC),\Q(r))\big)$.

\medskip
\noindent
{\rm (iii)}
$F^{\nu_1}\CH^{r_1}(X;\Q)\bullet F^{\nu_2}\CH^{r_2}(X;\Q)\subset
F^{\nu_1 +\nu_2}\CH^{r_1+r_2}(X;\Q)$, where $\bullet$ is
the intersection product.

\medskip
\noindent
{\rm (iv)} $F^{\nu}$ is  preserved under the action of correspondences
between smooth projective varieties over $\BC$.

\medskip
\noindent
{\rm (v)} Let ${\rm Gr}_F^{\nu} :=  F^{\nu}/F^{\nu +1}$ and
assume that the K\"unneth components of the diagonal
class $[\Delta_X] = \oplus_{p+q = 2d}[\Delta_X(p,q)] \in
H^{2d}(X\times X,\Q(d)))$ are algebraic over $\Q$.  Then
\[
\Delta_X(2d-2r+\ell,2r-\ell)_{\ast}\big\vert_{{\rm Gr}_F^{\nu}\CH^r(X,m;\Q)}
= \delta_{\ell,\nu}\cdot \text{\rm Identity}.
\]
[If we assume the conjecture that homological and numerical equivalence coincide,
then (v) says that ${\rm Gr}_F^{\nu}$ factors through the Grothendieck motive.]

\medskip
\noindent
{\rm (vi)} Let $D^r(X) := \bigcap_{\nu}F^{\nu}$, and $k = \ol{\Q}$.
 If the Bloch-Beilinson
conjecture on the injectivity of the Abel-Jacobi map ($\otimes\Q$)
holds for smooth quasi-projective varieties defined over $\ol{\Q}$, then $D^r(X) = 0$.
\end{thm}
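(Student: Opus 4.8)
The plan is to build the filtration by \emph{spreading out} $X$ together with its cycles and then to transport the Leray filtration from the absolute (Deligne--Beilinson, equivalently absolute Hodge) cohomology of a smooth model down to the Chow groups. Concretely: given $X/\BC$ defined over a finitely generated field $K/\ol{\BQ}$ with a fixed embedding $K\hookrightarrow\BC$, choose the finitely generated subfield $k\subseteq K$ over $\ol{\BQ}$ and a smooth proper $\rho:\X\to S$ of smooth quasi-projective $k$-varieties with $k(S)=K$ and $\X_{\eta_{S}}\times_{K}\BC\cong X$. A class $\xi\in\CH^{r}(X;\BQ)$ descends to $\CH^{r}(\X_{\eta_{S}};\BQ)$ and, after shrinking $S$, spreads to $\widetilde{\xi}\in\CH^{r}(\X;\BQ)$; I would take its cycle class $cl_{\D}(\widetilde{\xi})\in H^{2r}_{\D}(\X(\BC),\BQ(r))$. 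The Leray filtration $L^{\bullet}$ for $\rho$ on this group has graded pieces built from $H^{p}_{\D}\bigl(S,R^{q}\rho_{*}\BQ(r)\bigr)$, the $R^{q}\rho_{*}\BQ$ being admissible variations of mixed Hodge structure, and one declares $\xi\in F^{\nu}\CH^{r}(X;\BQ)$ exactly when $cl_{\D}(\widetilde{\xi})\in L^{\nu}$, passing to the union of the subspaces so produced over all models $\X/S$ (and, for (vi), letting $k$ shrink toward $\ol{\BQ}$). The same recipe with $H^{2r-m}_{\D}(\X,\BQ(r))$ covers the higher Chow groups $\CH^{r}(X,m;\BQ)$ of (v).

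The first and hardest step is \textbf{well-definedness}: the membership condition must be shown independent of the model $S$ (and stable under enlarging $k$ within the union). I would do this using functoriality of the Leray filtration under the restriction/base-change maps comparing two spreads, together with the relative theorem of the fixed part and the decomposition theorem of Beilinson--Bernstein--Deligne and M. Saito, which keep the invariant part $H^{0}(S,R^{q}\rho_{*})$ under control and supply the necessary strictness. Granting this, property \textbf{(i)} follows because $L^{0}/L^{1}$ realises the global-sections piece $H^{0}(S,R^{2r}\rho_{*}\BQ(r))$ and the vanishing there of (the spread of) $cl_{\D}(\xi)$ is precisely the vanishing of $[\xi]\in H^{2r}(X(\BC),\BQ(r))$, so $F^{1}=\CH^{r}_{\hom}(X;\BQ)$. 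Property \textbf{(ii)}: an element of $L^{2}$ maps to $0$ in $L^{1}/L^{2}$, and the Abel-Jacobi invariant of $\xi$ factors through that graded piece (the $H^{1}(S,R^{2r-1}\rho_{*})$-summand surjects onto $AJ$ of the generic fibre by the theorem of the fixed part); hence $F^{2}\subseteq\ker\bigl(AJ\otimes\BQ\bigr)$.

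The structural properties are then largely formal. For \textbf{(iii)} one chooses a common spread carrying both cycles and uses that the Deligne cycle class is multiplicative and $L^{\bullet}$ is multiplicative for the cup product, so $L^{\nu_{1}}\!\bullet L^{\nu_{2}}\subseteq L^{\nu_{1}+\nu_{2}}$. For \textbf{(iv)}, a correspondence over $\BC$ spreads out after enlarging $k$ to a correspondence between models over a common base and induces a morphism of Leray-filtered absolute cohomologies; functoriality of $L^{\bullet}$ gives preservation of $F^{\nu}$, the enlargement of $k$ being absorbed by the union over all $k$. For \textbf{(v)}, assuming the K\"unneth components of $[\Delta_{X}]$ are algebraic over $\ol{\BQ}$, their spreads give relative K\"unneth projectors $\Delta_{\X/S}(\ast,\ast)$; since $\mathrm{Gr}_{F}^{\nu}$ is realised by $H^{\nu}(S,R^{2r-\nu}\rho_{*}\BQ(r))$, the decomposition of $R\rho_{*}\BQ$ in the derived category forces $\Delta_{X}(2d-2r+\ell,2r-\ell)_{*}$ to act there as $\delta_{\ell,\nu}\cdot\mathrm{Identity}$, which is the asserted formula.

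Finally \textbf{(vi)}: when $k=\ol{\BQ}$ one may take $S=\Spec\ol{\BQ}$, the construction reduces to iterating the absolute Hodge cycle class and its higher extension invariants, and $D^{r}(X)=\bigcap_{\nu}F^{\nu}$ becomes the group of cycles annihilated by the entire tower of arithmetic Abel-Jacobi maps. By a specialisation argument over a model of $X$ over $\ol{\BQ}$, such a class restricts on a general closed point to a homologically trivial cycle with vanishing Abel-Jacobi invariant on a smooth projective $\ol{\BQ}$-variety, hence is zero under the Bloch-Beilinson conjecture on injectivity of $AJ\otimes\BQ$; lifting back gives $D^{r}(X)=0$. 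I expect the main obstacle to be the well-definedness/stability step of paragraph two: it is there that admissibility of the variations $R^{q}\rho_{*}\BQ$, the theorem of the fixed part, and Saito's strictness results do the real work, while properties (i)--(v) are then manipulations with the Leray filtration and (vi) is immediate from the stated conjecture.
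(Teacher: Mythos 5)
Your proposal reconstructs essentially the construction the paper itself sketches (the theorem is quoted from \cite{Lew1} and not reproved here): spread $X$ out to a smooth proper $\rho:\X\to S$ over a finitely generated $k$, pull back the Leray filtration on absolute Hodge cohomology along the cycle class map to $\CH^r(\X;\Q)$, and pass to direct limits over open $U\subset S$ and over $K\subset\BC$, with (i)--(vi) then following from functoriality and multiplicativity of the Leray filtration. The one refinement you omit is that the paper works with the \emph{lowest weight part} $\ul{H}^{2r}_{\mathcal H}(\X,\Q(r))$, i.e.\ the image of $H^{2r}_{\mathcal H}(\ol{\X},\Q(r))$ for a smooth compactification $\ol{\X}$, which is what gives the graded pieces $E_{\infty}^{\nu,2r-\nu}(\rho)$ their clean description as extensions of $\Gamma\big(H^\nu(S,R^{2r-\nu}\rho_*\Q(r))\big)$ by a Jacobian-type quotient; apart from this your outline agrees with the paper's.
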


It is instructive to briefly explain how this filtration comes about.
 Consider a $k$-spread $\rho : \X \to {S}$, where
$\rho$ is smooth and proper.  Let $\eta$
be the generic point of ${S}/k$, and put $K := k(\eta)$. Write
$X_K := \X_\eta$. From  \cite{Lew1} we introduced a
decreasing filtration $\F^\nu\CH^r(\X;\Q)$, with the
property that   $Gr_\F^\nu\CH^r(\X;\Q) \hookrightarrow
E_{\infty}^{\nu,2r-\nu}(\rho)$, where $E_{\infty}^{\nu,2r-\nu}(\rho)$
is the $\nu$-th graded piece of the Leray filtration on
the lowest weight part $\ul{H}^{2r}_{\mathcal H}(\X,\Q(r))$
of Beilinson's absolute Hodge
cohomology $H^{2r}_{\mathcal H}(\X,\Q(r))$ associated to $\rho$.
That lowest weight part $\ul{H}^{2r}_{\mathcal H}(\X,\Q(r))
\subset {H}^{2r}_{\mathcal H}(\X,\Q(r))$ is given by the image
$H^{2r}_{\mathcal H}(\ol{\X},\Q(r)) \to H^{2r}_{\mathcal H}(\X,\Q(r))$,
where $\ol{\X}$ is a smooth compactification of $\X$. There
is a cycle class map $\CH^{r}(\X;\Q)
:= \CH^{r}(\X/k;\Q) \to
\ul{H}^{2r}_{\mathcal H}(\X,\Q(r))$, which is conjecturally
injective if $k = \ol{\Q}$ under the Bloch-Beilinson conjecture assumption,
using the fact that there is a short exact sequence:
\[
0 \to J(H^{2r-1}(\X,\Q(r)))\to  {H}^{2r}_{\mathcal H}(\X,\Q(r)) \to
\Gamma(H^{2r}(\X,\Q(r))) \to 0.
\]
(Injectivity would imply $D^{r}(X) = 0$.)
Regardless of whether or not injectivity holds, the
filtration $\F^\nu\CH^r(\X;\Q)$ is given by the
pullback of the Leray filtration on $\ul{H}^{2r}_{\mathcal H}(\X,\Q(r))$ to
$\CH^{r}(\X;\Q)$.
It is proven in \cite{Lew1} that the term $E_{\infty}^{\nu,2r-\nu}(\rho)$
fits in a short exact sequence:
$$
0\to \ul{E}_{\infty}^{\nu,2r-\nu}(\rho) \to E_{\infty}^{\nu,2r-\nu}(\rho)
\to \ul{\ul{E}}_{\infty}^{\nu,2r-\nu}(\rho) \to 0,
$$
where
\[
\ul{\ul{E}}_{\infty}^{\nu,2r-\nu}(\rho) =
\Gamma(H^\nu({S},R^{2r-\nu}\rho_{\ast}\Q(r))),
\]
\[
\ul{E}_{\infty}^{\nu,2r-\nu}(\rho) = \frac{J(
W_{-1}H^{\nu-1}({S},R^{2r-\nu}\rho_{\ast}\Q(r)))}{\Gamma(Gr_W^0H^{\nu-1}
({S},R^{2r-\nu}\rho_{\ast}\Q(r)))}
\]
\[
\subset J(H^{\nu-1}({S},R^{2r-\nu}\rho_{\ast}\Q(r))).
\]
[Here the latter inclusion is a result of the short exact sequence:
\[
0\to W_{-1}H^{\nu-1}({S},R^{2r-\nu}\rho_{\ast}\Q(r)) \to
W_0H^{\nu-1}({S},R^{2r-\nu}\rho_{\ast}\Q(r))
\]
\[
\to Gr_W^0H^{\nu-1}({S},R^{2r-\nu}\rho_{\ast}\Q(r)) \to 0.]
\]

One then has (by definition)
\[
F^{\nu}\CH^r(X_K;\Q) =\lim_{\buildrel \to\over
{U\subset S/k}}\F^\nu\CH^r(\X_U;\Q), \quad \X_{U} := \rho^{-1}(U)
\]
\[
F^{\nu}\CH^r(X_{\BC};\Q) = \lim_{\buildrel \to
\over {K\subset \BC}}F^{\nu}\CH^r(X_K;\Q)
\]
Further, since direct limits preserve exactness,
\[
Gr_F^{\nu}\CH^r(X_K;\Q) =
\lim_{\buildrel \to\over {U\subset S/k}}Gr_{\F}^\nu\CH^r(\X_{U};\Q),
\]
\[
Gr_F^{\nu}\CH^r(X_{\BC};\Q) = \lim_{\buildrel \to\over
{K\subset \BC}}Gr_F^{\nu}\CH^r(X_K;\Q)
\]

\subsection{(Generalized) normal functions}

Let us now assume that with regard to the smooth and proper map
$\rho : \X \to {S}$ over a subfield  $k \subset \BC$\magenta{, finitely generated over $\overline{\BQ}$,} and after
possibly shrinking ${S}$, that ${S}$ is affine, with $K = k({S})$.
Let $V\subset \red{{S}_{\BC}}$ be smooth, irreducible, closed
subvariety of dimension
$\nu-1$ (note that ${S}$ affine $\Rightarrow V$ affine). One has a
commutative square
\[
\begin{matrix}
\X_V&\hookrightarrow&\red{\X_{\BC}}\\
&\\
\rho_V\downarrow\quad&&\quad\downarrow\rho\\
&\\
V&\hookrightarrow&{S}_{\BC},
\end{matrix}
\]
and a commutative diagram
\[
\begin{matrix}
&&&\xi\in&Gr_{\F}^\nu\CH^r(\X;\Q)&\mapsto&Gr_{F}^{\nu}\CH^r(X_K;\Q)\\
&\\
&&&&\downarrow\\
&\\
0&\to&\ul{E}_{\infty}^{\nu,2r-\nu}(\rho)&\to&E_{\infty}^{\nu,2r-\nu}(\rho)
&\to&\ul{\ul{E}}_{\infty}^{\nu,2r-\nu}(\rho)&\to&0\\
&\\
&&\downarrow&&\downarrow&&\downarrow\\
&\\
0&\to&\ul{E}_{\infty}^{\nu,2r-\nu}(\rho_V)&\to&
E_{\infty}^{\nu,2r-\nu}(\rho_V)
&\to&\ul{\ul{E}}_{\infty}^{\nu,2r-\nu}(\rho_V)&\to&0\\
&\\
&&&&&&||\\
&\\
&&&&&&0
\end{matrix}
\]
where $\ul{\ul{E}}_{\infty}^{\nu,2r-\nu}(\rho_V) = 0$ follows from the weak
Lefschetz theorem for locally constant systems
over affine varieties. Thus for any $\xi\in Gr_{\F}^\nu\CH^r(\X;\Q)$,
we have a ``normal
function'' $\eta_\xi$ with the property that for any such smooth irreducible
closed $V\subset S_{\BC}$ of dimension $\nu-1$, we have a value $\eta_\xi(V) \in
\ul{E}_{\infty}^{\nu,2r-\nu}(\rho_V)$. Here we think of $V$
as a point on a suitable open subset of the Chow variety
of dimension $\nu-1$ subvarieties of
${S}_{\BC}$ and $\eta_\xi$ defined on that subset. For example if $\nu =1$,
then we recover the classical notion of normal functions.

\begin{defn} {\rm $\eta_{\xi}$ is called an arithmetic normal function.}
\end{defn}

\begin{example}{\rm  \red{Suppose} ${S}$ is affine of dimension $\nu -1$. Then
in this case $V = {S}$, and $\xi\in Gr_{\F}^{\nu}\CH^{r}(\X;\Q)$
induces a ``single point'' normal function
\[
\eta_{\xi}(V) = \eta_{\xi}({S})
\in J(H^{\nu-1}({S},R^{2r-\nu}\rho_{\ast}\Q(r))).
\]}
\end{example}

Now let $\xi \in \F^{\nu}\CH^r(\X;\Q)$ be given, and let
$[\xi]\in \ul{\ul{E}}_{\infty}^{\nu,2r-\nu}(\rho)$ be its image via
the composite
\[
\F^{\nu}\CH^r(\X;\Q) \to {E}_{\infty}^{\nu,2r-\nu}(\rho) \to \ul{\ul{E}}_{\infty}^{\nu,2r-\nu}(\rho).
\]

\subsection{The invariants}
\begin{thm}[see \cite{K-L}] The class $[\xi]$ depends only on $\eta_{\xi}$, and is called the
topological invariant of $\eta_{\xi}$.
\end{thm}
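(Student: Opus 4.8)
The plan is to recover $[\xi]$ as the topological invariant of the normal function $\eta_\xi$, read off over a suitable one-parameter sub-family, so that it can depend on nothing but $\eta_\xi$. Both $\xi\mapsto\eta_\xi$ and $\xi\mapsto[\xi]$ being $\Q$-linear, it suffices to show that $\eta_\zeta=0$ forces $[\zeta]=0$ for $\zeta\in\F^\nu\CH^r(\X;\Q)$. The subtlety to bear in mind is that this is not formal: for a single smooth irreducible closed affine $V\subset S(\BC)$ of dimension $\nu-1$, the weak Lefschetz theorem for local systems gives $\ul{\ul{E}}_\infty^{\nu,2r-\nu}(\rho_V)=0$, so the restriction of $\mathrm{Gr}_{\F}^\nu(\zeta)$ to $V$ lands entirely in the Jacobian part $\ul{E}_\infty^{\nu,2r-\nu}(\rho_V)$ and $[\zeta]$ is invisible on every individual $V$. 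What carries the information is the way these Jacobian values assemble into a holomorphic section over the parameter space and, in particular, the monodromy of that section.

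Write $\mathcal H:=R^{2r-\nu}\rho_*\Q(r)$; we may assume $S$ is affine, and we assume $[\zeta]\neq 0$, so that by weak Lefschetz its restriction to a general smooth affine $\nu$-dimensional linear section $W\subset S$ is still nonzero. Pick a point $p_0\notin W$, let $V_t=W\cap H_t$ range over the hyperplanes $H_t$ not through $p_0$ (so $t$ runs over $T\cong\mathbb A^M$), and form $\mathcal V=\{(w,t)\in W\times T:w\in H_t\}$ with projections $q:\mathcal V\to W\subset S$ and $q_T:\mathcal V\to T$. Then $q$ is an affine-space bundle, so $q^*[\zeta]\neq 0$ in $H^\nu(\mathcal V,q^*\mathcal H)$; and since $\mathcal V$ is affine, a further application of weak Lefschetz lets us restrict along $\mathcal V_C:=q_T^{-1}(C)\hookrightarrow\mathcal V$, for $C\subset T$ a general affine complete-intersection curve, while keeping the class nonzero. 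Now $q_C:\mathcal V_C\to C$ has an affine curve as base and affine $(\nu-1)$-dimensional fibres, so its Leray spectral sequence has the single nonzero term $E_2^{1,\nu-1}=E_\infty^{1,\nu-1}$, whence a canonical identification $H^\nu(\mathcal V_C,q^*\mathcal H)=H^1(C,R^{\nu-1}(q_C)_*q^*\mathcal H)$.

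The last step is the comparison. By smooth base change, $q^*\mathcal H=R^{2r-\nu}(\rho_{\mathcal V_C})_*\Q(r)$ for $\rho_{\mathcal V_C}:\X\times_S\mathcal V_C\to\mathcal V_C$, and by the functoriality of the Leray/absolute-Hodge filtration of \cite{Lew1} under pullback and restriction one has $\mathrm{Gr}_{\F}^\nu(q^*\zeta)=q^*\mathrm{Gr}_{\F}^\nu(\zeta)$, whose restrictions to the fibres $V_t$ ($t\in C$) are precisely the values $\eta_\zeta(V_t)$. These values form a classical normal function over $C$ valued in the intermediate Jacobians of $R^{\nu-1}(q_C)_*q^*\mathcal H$, and under the identification above its classical topological invariant in $H^1(C,R^{\nu-1}(q_C)_*q^*\mathcal H)$ coincides with the nonzero image of $[\zeta]$. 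Hence $\eta_\zeta=0$ would make this invariant vanish, a contradiction; therefore $[\zeta]=0$, and $[\xi]$ depends only on $\eta_\xi$.

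The step I expect to be the main obstacle is the comparison: verifying that the filtration of \cite{Lew1} is compatible with the pullbacks $q^*$ and with restriction to the fibres of $q_C$, and that the resulting class genuinely agrees with the classical topological invariant of the normal function over $C$ — together with the verification that the two weak-Lefschetz restrictions can be arranged to stay injective on the relevant Hodge classes, so that enough one-parameter families detect all of $\Gamma(H^\nu(S,\mathcal H))$. These compatibilities are what is carried out in \cite{K-L}.
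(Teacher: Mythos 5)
The paper does not actually prove this statement: it is quoted from \cite{K-L} with no internal argument, so there is nothing of the authors' own to compare you against. Your strategy --- reduce by linearity to showing $\eta_\zeta=0\Rightarrow[\zeta]=0$, cut $S$ down by affine weak Lefschetz to a $\nu$-dimensional $W$ on which the restriction of $[\zeta]$ survives, and then detect that restriction as the extension-class (``topological'') invariant of the classical normal function obtained by assembling the values $\eta_\zeta(V_t)$ over a one-parameter family of hyperplane sections of $W$ --- is indeed the mechanism behind the theorem, and you are right that the comparison with the Leray/absolute-Hodge filtration of \cite{Lew1} is where the real work lies.

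Two of your intermediate steps are, however, gaps rather than proofs as written. First, the injectivity of $H^\nu(\mathcal V,q^*\mathcal H)\to H^\nu(\mathcal V_C,q^*\mathcal H)$ is \emph{not} an application of the affine weak Lefschetz theorem: $\mathcal V_C=q_T^{-1}(C)$ is cut out of $\mathcal V$ by hypersurfaces pulled back from $T$, which are never in general position (they contain whole fibres of $q_T$), and restriction to such a preimage can kill classes of Leray level $\ge 2$ with respect to $q_T$ (e.g.\ already for a product $E\times F\to E$). One must instead argue through the Leray filtration of $q_T$ --- showing the class is detected in $E_\infty^{1,\nu-1}(q_T)$ and then applying weak Lefschetz \emph{on $T$} to $H^1(T,R^{\nu-1}(q_T)_*q^*\mathcal H)$ --- or invoke a Lefschetz-pencil/perverse-Leray statement. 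Second, the identification $H^\nu(\mathcal V_C,q^*\mathcal H)=H^1(C,R^{\nu-1}(q_C)_*q^*\mathcal H)$ is false in general: $q_C$ is not proper, base change fails, and Artin's theorem for affine morphisms only guarantees that $R^\nu(q_C)_*q^*\mathcal H$ is a skyscraper supported on the discriminant of $q_C$, not that it vanishes; since a general curve $C$ necessarily meets the dual-variety hypersurface, the Leray spectral sequence has an extra quotient $H^0(C,R^\nu(q_C)_*q^*\mathcal H)$ in which your nonzero class could a priori live, invisible to the normal function. Shrinking $C$ off the discriminant does not immediately repair this, because that further restriction has its own potential kernel. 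These points, together with the functoriality you already flag, are precisely what the machinery of \cite{K-L} is deployed to handle; your proposal is a faithful outline of that argument rather than a self-contained proof.
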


Let us assume that $S$ is affine. Then
\[
\CO_S\otimes_{\BC} R^{i} \rho_*\BC\xrightarrow{\nabla} \Omega^1_S\otimes
R^{i} \rho_*\BC \xrightarrow{\nabla} \cdots,
\]
is an acyclic resolution of $R^{2r-\nu} \rho_*\BC$ in the analytic topology, where $\nabla := \partial\otimes Id$ is
the Gauss-Manin connection. The corresponding
cohomology $H^{\nu}(S,R^{2r-\nu}\rho_*\BC)$
is given by $H^0(S,-)$ of the middle cohomology in:
\[
\Omega^{\nu-1}_S\otimes
R^{2r-\nu} \rho_*\BC \xrightarrow{\nabla} \Omega^{\nu}_S\otimes
R^{2r-\nu} \rho_*\BC \xrightarrow{\nabla} \Omega^{\nu+1}_S\otimes
R^{2r-\nu} \rho_*\BC,
\]
which is by definition the space of de Rham invariants, and is denoted by $\nabla DR^{r,\nu}(\X/S)$.
As the map $\ul{\ul E}_{\infty}^{\nu,2r-\nu}(\rho) \hookrightarrow \nabla DR^{r,\nu}(\X/S)$, together with the
regularity of $\nabla$, it follows that the de Rham invariant of an algebraic cycle is the same as the topological invariant. It turns out that
$H^i(S,R^j\rho_*\Q(r))$ defines a $\Q$-MHS \cite{Ar}, hence its complexification carries a descending Hodge
filtration $F^{\bullet}H^i(S,R^j\rho_*\BC)$. In particular,
\[
\ul{\ul E}_{\infty}^{\nu,2r-\nu}(\rho) \hookrightarrow  F^rH^{\nu}(S,R^{2r-\nu}\rho_*\BC),
\]
where the latter term maps to $H^0(S,-)$ of the middle cohomology in:
\begin{equation}\label{E1}
\Omega^{\nu-1}_S\otimes F^{r-\nu+1}
R^{2r-\nu} \rho_*\BC \xrightarrow{\nabla} \Omega^{\nu}_S\otimes
F^{r-\nu}R^{2r-\nu} \rho_*\BC
\end{equation}
\[ \xrightarrow{\nabla} \Omega^{\nu+1}_S\otimes
F^{r-\nu-1}R^{2r-\nu} \rho_*\BC,
\]
which is called the space of Mumford-Griffiths invariants, and is denoted by
$\nabla J^{r,\nu}(\X/S)$. Note that there is a natural ``forgetful'' map
$\nabla J^{r,\nu}(\X/S) \to \nabla DR^{r,\nu}(\X/S)$, which need not be injective.
Having said this, it is clear from the above discussion that
\[
\IM\big(\ul{\ul E}_{\infty}^{\nu,2r-\nu}(\rho) \to \nabla J^{r,\nu}(\X/S)\big)\to
\IM\big(\ul{\ul E}_{\infty}^{\nu,2r-\nu}(\rho) \to \nabla DR^{r,\nu}(\X/S)\big),
\]
is an isomorphism. Thus when it comes to the image of algebraic cycles, the
de Rham and Mumford-Griffiths  invariants coincide! (All of this is based on \cite{L-S} and \cite{MS}.) Those cycles that have trivial
Mumford-Griffiths invariant must therefore land in $\ul{E}_{\infty}^{\nu,2r-\nu}(\rho)$. In some
instances, this can be an uncountable space. Note that
\[
\Omega^{\nu-1}_S\otimes F^{r-\nu+2}
R^{2r-\nu} \rho_*\BC \xrightarrow{\nabla} \Omega^{\nu}_S\otimes
F^{r-\nu+1}R^{2r-\nu} \rho_*\BC
\]
\[
\xrightarrow{\nabla} \Omega^{\nu+1}_S\otimes
F^{r-\nu}R^{2r-\nu} \rho_*\BC,
\]
is a subcomplex of (\ref{E1}). The Mumford invariants are $H^0(S,-)$ of the middle cohomology of the cokernel complex:
\[
\Omega^{\nu-1}_S\otimes \h^{r-\nu+1,r-1}(\X/S)
 \xrightarrow{\tilde \nabla} \Omega^{\nu}_S\otimes
\h^{r-\nu,r}(\X/S)
\]
\[
\xrightarrow{\tilde\nabla} \Omega^{\nu+1}_S\otimes
\h^{r-\nu-1,r+1}(\X/S),
\]
and where $\tilde\nabla$ is induced from $\nabla$.

\begin{example}\label{EX1}
 Let us put $N := \dim S$ and $n$ the relative dimension of $\rho$, with $r=n$. In this case
 we are studying the relative $0$-cycles on each fiber of $\rho$.  This involves $\F^n\CH^n(\X;\Q)$,
where we set $\nu = n$.  Then
 \[
H^0\biggl(S, \frac{\Omega^n_S\otimes_{\CO_S}\h^{0,n}(\X/S)}{{\tilde\nabla}\big(\Omega_S^{n-1}\otimes_{\CO_S}\h^{1,n-1}(\X/S)
\big)}\biggr)
\]
is the associated space of  Mumford invariants.  If $n=2$, it also appears in \cite{V2}.
Note that in this case, we need $\xi \in \CH^2(\X;\Q)$ to be Abel-Jacobi equivalent to zero fiberwise, in order that $\xi \in \F^2\CH^2(\X;\Q)$.
\end{example}

\begin{question}\label{Q1} {\rm   (i) Can one characterize this  filtration
in terms of arithmetic normal functions?

\medskip

(ii) By choosing $V$ sufficiently general, can one characterize this  filtration
in terms of the corresponding  Abel-Jacobi map for a fixed general variety?
E.g. we know that $F^1\CH^r(X;\Q) = \CH^r_{\hom}(X;\Q)$ and
$$
F^2\CH^r(X;\Q) \subseteq \CH^r_{AJ}(X;\Q) :=  \ker AJ_X:\CH^r_{\hom}(X;\Q) \to J^r(X)_{\Q}.
$$
Is it the case that $F^2\CH^r(X;\Q) = \CH^r_{AJ}(X;\Q)$?

\medskip

(ii)$^{'}$ What about the zero (or torsion) locus of such normal functions. I.e., are they
sensitive to the field of definition of algebraic cycles?}
\end{question}

\begin{remark} {\rm $\bullet_1$ Special cases of Question \ref{Q1}(i)  are worked out in \cite{K-L}.
Further, if both $X$ and $S$  are defined over $k$, with $\X = S\times_k X$, with $\rho = {\rm Pr}_1$,
then the answer is yes, as shown in \cite{Lew2}.

\medskip
$\bullet_2$ In the case where $\nu = 1$, (ii) and (ii)$^{'}$  can be shown to be equivalent. (See for
example \cite{Lew3}.)}
\end{remark}

\subsection{Example \ref{EX1} revisited} Let us put $N := \dim S$ and $n$ the relative dimension of $\rho$.

\begin{question} Does there \red{exist} a morphism of sheaves
\[
\frac{\Omega^n_S\otimes_{\CO_S}\h^{0,n}}{\tilde{\nabla}\big(\Omega_S^{n-1}\otimes_{\CO_S}\h^{1,n-1}
\big)} \to {\h}om_{\CO_S}\big(\rho_*(\wedge^N\Omega_{\X}),\omega_S\big),
\]
induced by
\[
(a\otimes b,\rho_*(c))\in \big(\Omega_S^n\otimes\h^{0,n},\rho_*(\wedge^N\Omega_{\X})\big) \mapsto
a\wedge\rho_*(\rho^*(b),c) = a\wedge \int_{\X_t}  \rho^*(b)\wedge c\in \omega_{S,t},
\]
where $\omega_S$ is the canonical sheaf on $S$?
\end{question}

\begin{remark}
The answer is yes if $\X = S \times_k X$, for in this case
\[
\tilde{\nabla}\big(\Omega_S^{n-1}\otimes_{\CO_S}\h^{1,n-1}
\big) = 0.
\]
\end{remark}

\end{document}